\definecolor{darkolivegreen}{rgb}{0.33, 0.42, 0.18}
\definecolor{celestialblue}{rgb}{0.29, 0.59, 0.82}
\numberwithin{equation}{section} % Number eqs with section-number
\newcommand{\cF}{\ensuremath{\mathcal{F}}}
\newcommand{\cH}{\ensuremath{\mathcal{H}}}
\newcommand{\cK}{\ensuremath{\mathcal{K}}}
\newcommand{\cL}{\ensuremath{\mathcal{L}}}
\newcommand{\cM}{\ensuremath{\mathcal{M}}}
\newcommand{\cN}{\ensuremath{\mathcal{N}}}
\newcommand{\cO}{\ensuremath{\mathcal{O}}}
\newcommand{\cP}{\ensuremath{\mathcal{P}}}
\newcommand{\cS}{\ensuremath{\mathcal{S}}}
\newcommand{\cV}{\ensuremath{\mathcal{V}}}
\newcommand{\cW}{\ensuremath{\mathcal{W}}}
\newcommand{\bE}{\ensuremath{\mathbb{E}}}
\newcommand{\bG}{\ensuremath{\mathbb{G}}}
\newcommand{\bN}{\ensuremath{\mathbb{N}}}
\newcommand{\bP}{\ensuremath{\mathbb{P}}}
\newcommand{\bR}{\ensuremath{\mathbb{R}}}
\newcommand{\rY}{\ensuremath{\mathrm{Y}}}
\def\[{\left[}
\def\]{\right]}
\def\<{\langle}
\def\>{\rangle}
\def\({\left(}
\def\){\right)}
\def\[{\left [}
\def\]{\right]}
\def\({\left(}
\def\){\right)}
\newcommand{\innerp}[2]{\< #1, #2 \>}
\newcommand{\norm}[1]{\Vert #1 \Vert}
\newcommand{\Vn}{\ensuremath{{V_n}}}
\newcommand{\Wm}{\ensuremath{{W_m}}}
\newcommand{\wc}{\ensuremath{\mathrm{wc}}}
\newcommand{\ms}{\ensuremath{\mathrm{ms}}}
\newcommand{\pbdw}{\ensuremath{\text{(pbdw)}}}
\newcommand{\wcpbdw}{\ensuremath{\text{(wc, pbdw)}}}
\newcommand{\mspbdw}{\ensuremath{\text{(ms, pbdw)}}}
\newcommand{\wcaff}{\ensuremath{\text{(wc, aff)}}}
\newcommand{\wcaffk}{\ensuremath{\text{(wc, k)}}}
\newcommand{\msaffk}{\ensuremath{\text{(ms, k)}}}
\newcommand{\dist}{\operatorname{dist}}
\newcommand{\opt}{\ensuremath{\mathrm{opt}}}
\renewcommand{\sup}{\text{sup}}
\renewcommand{\inf}{\text{inf}}
\renewcommand{\max}{\text{max}}
\renewcommand{\min}{\text{min}}
\DeclareMathOperator*{\argmin}{arg\,min}
\newcommand{\dx}{\ensuremath{\mathrm dx}}
\newcommand{\ds}{\ensuremath{\mathrm ds}}
\DeclareMathOperator*{\vspan}{span}
\newcommand{\eps}{\ensuremath{\varepsilon}}
\newcommand{\inlet}{\text{in}}
\newcommand{\outlet}{\text{out}}
\newcommand{\HR}{\textrm{HR}}
\newcommand{\obs}{\text{obs}}
\newcommand{\unobs}{\text{unobs}}
\newcommand{\wss}{\cS}
\newcommand{\vorticity}{\Theta}
\providecommand{\abs}[1]{\lvert#1\rvert}
\newcommand{\om}[1]{\textcolor{black}{#1}} % Comments Olga
\newcommand{\fg}[1]{\textcolor{black}{#1}} % Comments Felipe
\newcommand{\fgal}[1]{\textcolor{black}{#1}} % Comments Felipe
\newcommand{\dl}[1]{\textcolor{black}{#1}} % Comments Felipe
\begin{document}

\title{
Reconstructing Haemodynamics Quantities of Interest from Doppler Ultrasound Imaging
}

\author{Felipe Galarce*$^1$}
\author{Damiano Lombardi$^1$}
\author{Olga Mula$^{1,2}$}

\authormark{Galarce \textsc{et al}}

\address[1]{Centre de Recherche INRIA de Paris \& Laboratoire Jacques-Louis Lions, France.}
\address[2]{Paris-Dauphine University, PSL Research University, CNRS, UMR 7534, CEREMADE, France.}

\corres{*2 Rue Simone Iff 75589 Paris Cedex 12. Bureau A316. \email{felipe.galarce-marin@inria.fr}}

\abstract[Summary]{The present contribution deals with the estimation of haemodynamics Quantities of Interest by exploiting Ultrasound Doppler measurements. A fast method is proposed, based on the \fg{Parameterized Background Data-Weak (PBDW) method}. Several methodological contributions are described: a sub-manifold partitioning is introduced to improve the reduced-order approximation, two different ways to estimate the pressure drop are compared, and an error estimation is derived. A \om{fully synthetic} test-case on a realistic common carotid geometry is presented, showing that the proposed approach is promising in view of realistic applications.}

\keywords{Quantities of interest, Flow Reconstruction, Doppler Measurements, Haemodynamics, Model reduction.}

\maketitle

\section{Introduction}
In biomedical engineering, most realistic applications have to deal with data assimilation. The problem to be solved consists in providing predictions on Quantities of Interest (QoI) given observations of the system which are often partial and noisy. The present work is a contribution to this topic and focuses on the reconstruction of haemodynamics QoI by exploiting Doppler Ultrasound Imaging. The proposed methodology is however general and can easily be extended to a broad set of other applications. Doppler Ultrasound, in its different modes, is one of the most used, clinically available technologies to monitor blood flows in the heart cavity and in several segments of the vascular tree. Its main advantages are that it is fast, non-invasive, and cheap. Its main drawback lies in the space resolution: the observed quantity (more precisely defined in Section \ref{sec:instantiation}) amounts to the noisy average in some voxels of one or two components of the velocity field.

In several applications related to the cardiovascular system, the QoI to be predicted are:
\begin{enumerate}
\item \emph{The complete 3D velocity field} and some quantities associated to it, say, for instance, the maximal velocity (\cite{hata1987,galarce2020}). 
\item \emph{Pressure and pressure drop} (\cite{hatle1978,hatle1979,hatle1980,mates1978,funamoto2013}): this is particularly relevant, since it is one of the main indicators of the severity of stenoses and eventual arterial blockages. The direct measure of a pressure (or even a pressure drop) could be performed by implanting a catheter, hence in a rather invasive way. 
\item \emph{The vorticity} (\cite{mehregan2014,hirtler2016,charonko2013,sotelo2018}): this quantity is monitored especially in the heart cavity and around cardiac valves. A too large vorticity could induce, for instance, haemolysis.
\item \emph{The wall shear stress} (\cite{gibson1993,reneman2006,shojima2004}): this is related to the mechanical stress that the blood exerts on the endothelial cells, of paramount importance in aneurysms and plaque formation. 
\end{enumerate}

The QoI listed have to be reliably estimated \textit{in vivo}, with the additional constraint of being estimated fast, ideally in real time. For this, two main approaches are available. The first consists in a purely data-driven strategy where learning techniques are used to build an approximation of the observable-to-QoI map given a sufficiently large dataset. The second consists in using an \emph{a priori} description of the physics involved by means of a mathematical model, often given in the form of a Partial Differential Equation (PDE), and then solve an inverse problem. On the one hand, since we are dealing with space fields estimation, the pure data-driven learning approach will in general need an exceedingly large data set to meet the accuracy constraints of the application. On the other hand, discretising the system of Partial Differential Equations and solving the inverse problem will in general result in a prohibitive computational cost, thus leading to unacceptable computing times. These facts motivate the use of mixed approaches combining an \emph{a priori} knowledge coming from an available, potentially inexact physical model of the system, and the \emph{a posteriori} knowledge coming from the data. One recent example in this direction is \cite{kissas2020}, where a physics-informed machine learning approach to estimate pressure in blood vessels from MRI was proposed. In this work, we use a different methodology based on reduced-order modelling of parametrized PDEs.

Our contribution \dl{is to extend some existing methods involving reduced modelling and to adapt them to the present context in order} to propose a systematic methodology to estimate the above five QoI in close to real time, and to assess its feasibility in non trivial numerical examples involving the carotid artery. However, due to our lack of real ultrasound images, our experiments present certain limitations: we have worked with synthetically generated images and have used an admittedly simple Gaussian modelling of the ultrasound noise (Doppler ultrasound images present a very involved space-time structure which is not the main topic of our work and we refer to \cite{ledoux1997,bjaerum2002,demene2015} for further details on this matter). The PDE model considered to describe the haemodynamics is the system of incompressible Navier-Stokes equations, which is generally acknowledged to be accurate for large vessels such as the carotid artery. We therefore assume that there is no model error and that the true system is governed by these equations. Note in addition that this assumption also comes from the fact that it is not possible to study the impact of the model error without real measurements. 

The structure of the paper is as follows. In Section \ref{sec:RecMeth} we describe the reconstruction method which we use. The method is very general and its main mathematical foundations have been established in previous works (see \cite{MM2013, MMPY2015, MPPY2015, MMT2016, BCDDPW2017, ABGMM2017, Taddei2017}). We make a presentation that alternates between a summary of the general mathematical theory, and its particular application to our problem of interest. One relevant point to remark is that so far the methodology has mainly focused on reconstructing spatial fields from observable quantities. In our case, this concerns the reconstruction of the 3D velocity field. One relevant novelty with respect to previous contributions is that we show that it is possible to reconstruct unobserved quantities such as the pressure field or the pressure drop in our problem. This is possible by making a \emph{joint reconstruction of observable and unobservable fields}, which are velocity and pressure in our case. We explain this  idea in \ref{sec:joint_up}. The reconstruction of the wall shear stress and the vorticity are discussed in Sections \ref{subsubsec:wss} and \ref{subsubsec:vort} respectively. The numerical experiments on a carotid bifurcation are given in Section \ref{sec:noise-free_test} for the case of noiseless images. We examine the effect of noise in Section \ref{sec:cls_test}.

\section{Reconstruction methods}
\label{sec:RecMeth}

In this section we present the reconstruction methods that we use in this work. We alternate between abstract mathematical statements and their translation to our specific problem of interest in order to make the presentation as pedagogical as possible. To simplify the discussion, the presentation is done for noiseless measurements. At the end of the paper, we will discuss how to take noise into account.

\subsection{State estimation and recovery algorithms: abstract setting}
\label{sec:SE}

Our problem enters into the following setting, for which solid mathematical foundations have been developed in recent years. The relevant references will be cited throughout the discussion.

Let $\Omega$ be a domain of $\bR^d$ for a given dimension $d\geq 1$.
We work on a Hilbert space $V$ defined over $\Omega$ which is relevant for the problem under consideration. As we will see in the following, we may change $V$ depending on our needs. However, once the space is fixed, the subsequent developments have to remain consistent with this choice. The space is endowed with an inner product $\<\cdot, \cdot\>$ and induced norm $\Vert \cdot \Vert$.

Our goal is to recover an unknown function $u\in V$ from $m$ measurement observations
\begin{equation}
\label{eq:meas}
\ell_i(u),\quad i=1,\dots,m,
\end{equation}
where the $\ell_i$ are linearly independent linear forms over $V$. In many applications, each $\ell_i$ models a sensor device which is used to collect the measurement data $\ell_i(u)$. In our particular application, the observations come in the form of an image and each $\ell_i$ models the response of the system in a given pixel as Figure \ref{fig:doppler-img} illustrates. We denote by $\omega_i \in V$ the Riesz representers of the $\ell_i$. They are defined via the variational equation
$$
\left< \omega_i, v \right> = \ell_i(v),\quad \forall v \in V.
$$
Since the $\ell_i$ are linearly independent in $V'$, so are the $\omega_i$ in $V$ and they span an $m$-dimensional space
$$
\Wm={\rm span}\{\omega_1,\dots,\omega_m\} \subset V.
$$
The observations $\ell_1(u),\dots, \ell_m(u)$ are thus equivalent to knowing the orthogonal projection
\begin{equation}
\omega =P_\Wm u.
\end{equation}
In this setting, the task of recovering $u$ from the measurement observation $\omega$ can be viewed as building a recovery algorithm
$$
A:\Wm\mapsto V
$$
such that $A(P_{\Wm}u)$ is a good approximation of $u$ in the sense that $\Vert u - A(P_{\Wm}u) \Vert$ is small.

Recovering $u$ from the measurements $P_\Wm u$ is a very ill-posed problem since $V$ is generally a space of very high or infinite dimension so, in general, there are infinitely many $v\in V$ such that $P_\Wm v = \omega$. It is thus necessary to add some a priori information on $u$ in order to recover the state up to a guaranteed accuracy. In the following, we work in the setting where $u$ is a solution to some parameter-dependent PDE of the general form
\begin{equation*}
\cP(u, y) = 0,
\end{equation*}
where $\cP$ is a differential operator and $y$ is a vector of parameters that describes some physical property and lives in a given set $\rY\subset \bR^p$. Therefore, our prior on $u$ is that it belongs to the set
\begin{equation}
\label{eq:manifold}
\cM \coloneqq \{ u(y) \in V \; :\; y\in\rY \},
\end{equation}
which is sometimes referred to as the {\em solution manifold}. The performance of a recovery mapping $A$ is usually quantified in two ways:
\begin{itemize}
\item If the sole prior information is that $u$ belongs to the manifold $\cM$, the performance is usually measured by the worst case reconstruction error
\begin{equation}
\label{eq:err-A-wc}
E_{\wc}(A,\cM) = \sup_{u\in\cM} \Vert u - A(P_\Wm u) \Vert \, .
\end{equation}
\item In some cases $u$ is described by a probability distribution $p$ on $V$ supported on $\cM$. This distribution is itself induced by a probability distribution on $\rY$ that is assumed to be known. When no information about the distribution is available, usually the uniform distribution is taken. In this Bayesian-type setting, the performance is usually measured in an average sense through the mean-square error
\begin{equation}
\label{eq:err-A-ms}
E^2_{\ms}(A,\cM) = \bE\left( \Vert u - A(P_\Wm u) \Vert^2\right) = \int_V \Vert u - A(P_\Wm u)\Vert^2 dp(u) \, ,
\end{equation}
and it naturally follows that $E_{\ms}(A,\cM)\leq E_{\wc}(A,\cM) $.
\end{itemize}

\subsection{Instantiation to the application of interest}
\label{sec:instantiation}
In our case, $\Omega \subset \bR^3$ is a portion of a human carotid artery as given in Figure \ref{fig:geometry}. The boundary $\Gamma\coloneqq \partial \Omega$ is the union of the inlet part $\Gamma_i$ where the blood is entering the domain, the outlets $\Gamma_{o,1}$ and $\Gamma_{o,2}$ where the blood is exiting the domain after a bifurcation, and the walls $\Gamma_w$.

\begin{figure}[!htbp]
  \centering
   \includegraphics[width=0.6\textwidth]{./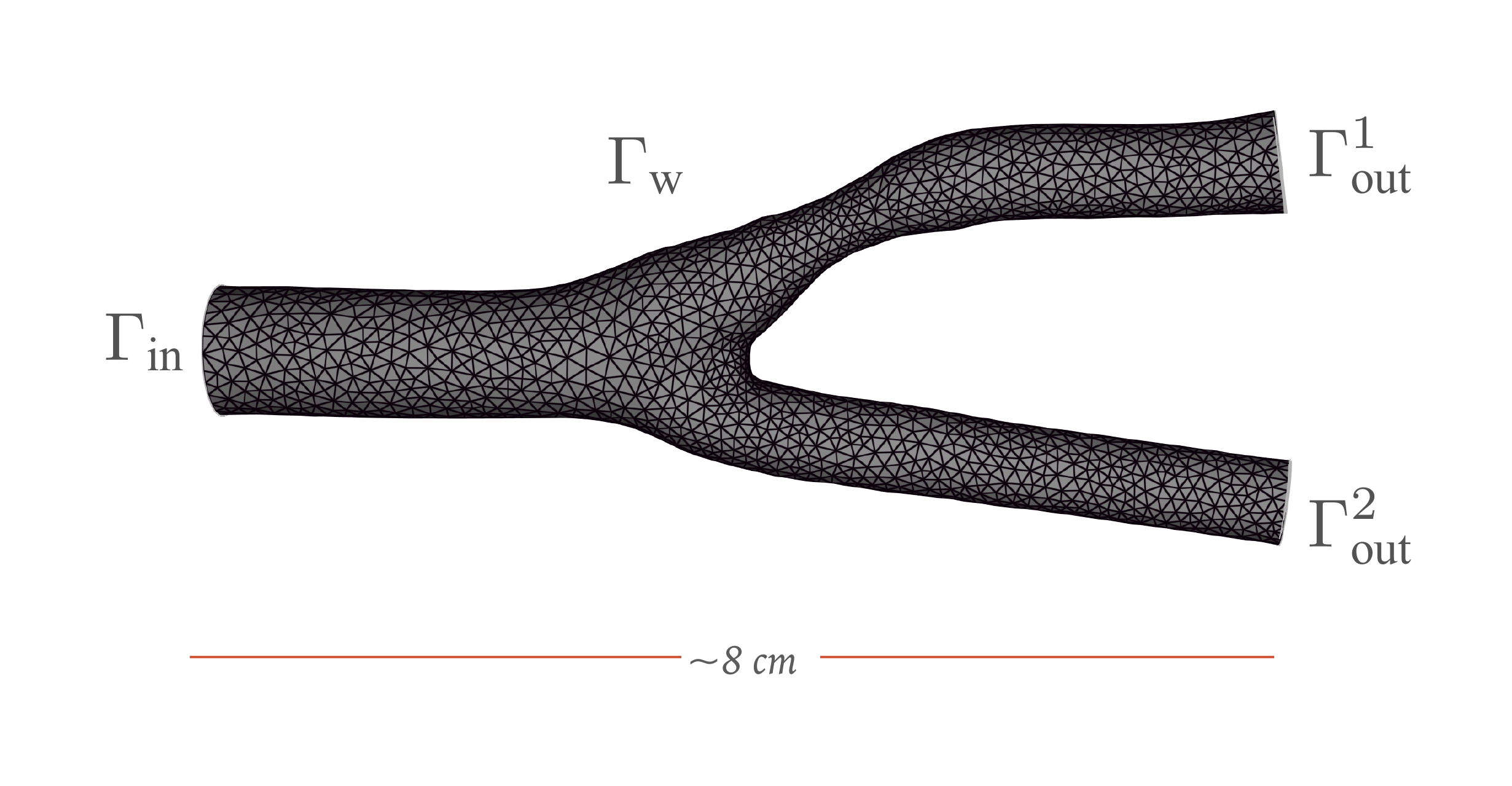}
   \includegraphics[height=1cm,angle=-90]{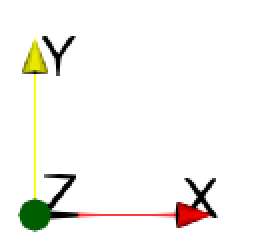}
  \caption{Domain $\Omega$ used in the simulations. Note the small stenosis in the upper part of the bifurcation. 
  }
  \label{fig:geometry}
\end{figure}

Our goal is to reconstruct for all time $t$ on an interval $[0, T]$ (with $T>0$) the full 3D velocity field $u$ and pressure $p$ in the whole carotid $\Omega$. Additionally, we also want to reconstruct related quantities like the wall-shear stress, the vorticity and the pressure drop between the inlet and the outlets. The Doppler ultrasound device gives images with a certain time frequency. Each image contains \emph{partial} information on the blood velocity in a subdomain of the carotid. Depending on the ultrasound technology, we are either given the projection of the velocity along the direction $b$ of the ultrasound probe (CFI mode), or along a plane (VFI mode). Figure \ref{fig:doppler-img} illustrates both imaging techniques.

\begin{figure}[!htbp]
\centering
\subfigure[Color flow image (CFI) \label{fig:cfi}]{ 
\includegraphics[height=5cm]{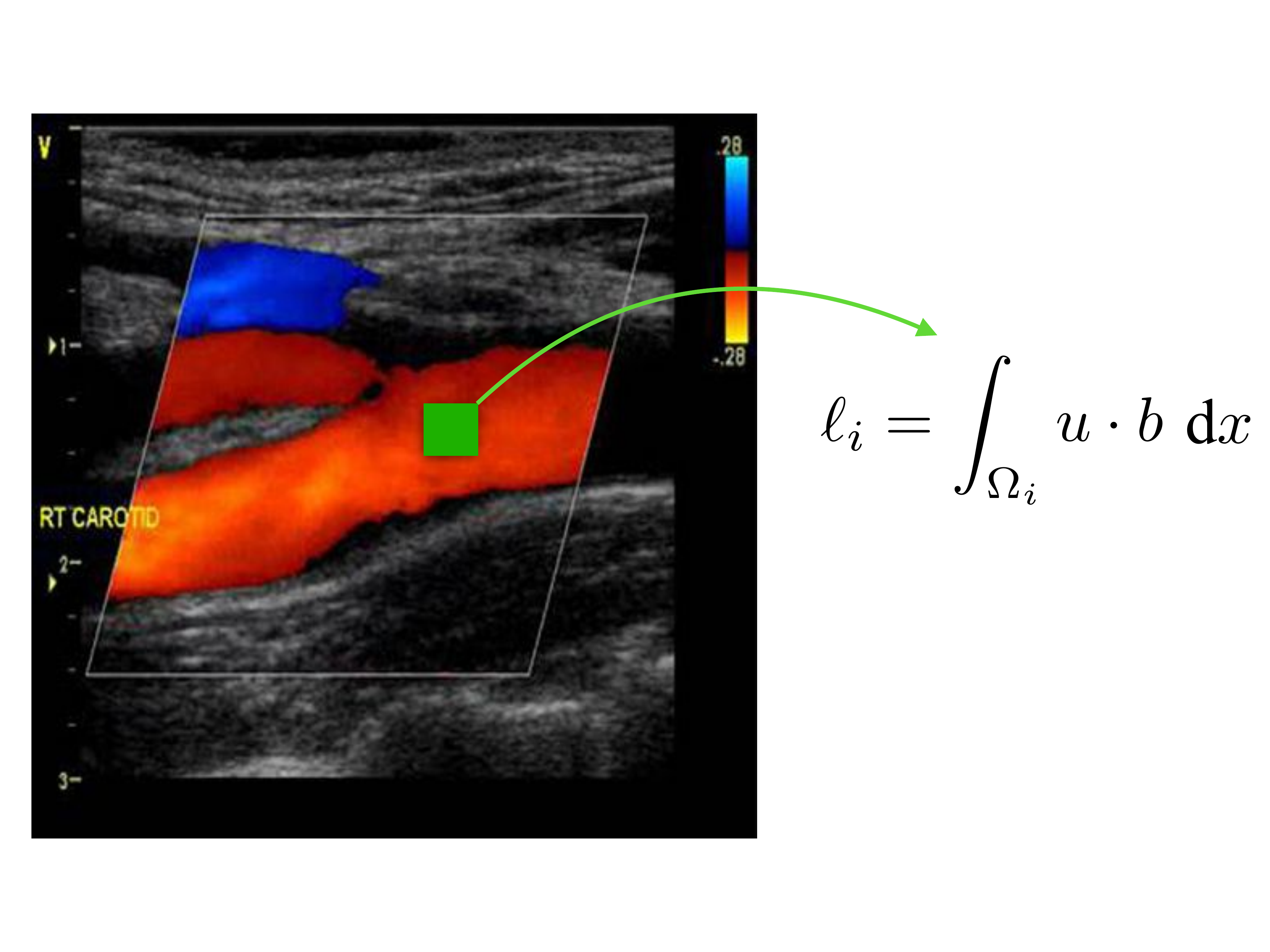}}
\subfigure[Vector flow image (VFI) \label{fig:vfi}]{ 
 \includegraphics[height = 5 cm]{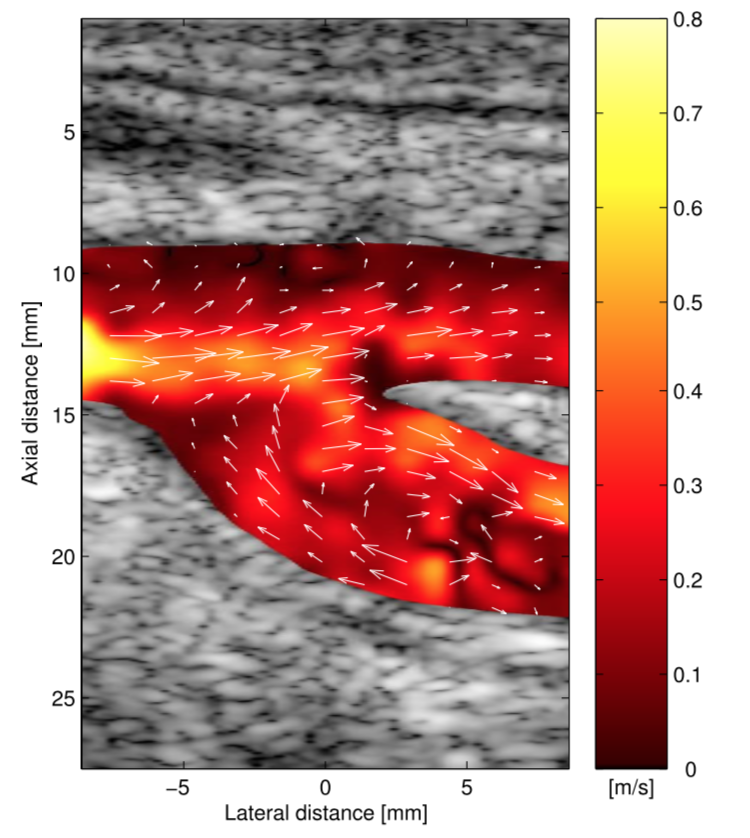}}
\caption{Velocity image of the common carotid bifurcation.}
\label{fig:doppler-img}
\end{figure}

Due to our lack of real images, our experiments are fully synthetic, and we work with an idealized version of CFI images to generate measurements. For each time $t$, a given image is a local average in space of the velocity projected onto the direction in which the ultrasound probe is steered. More specifically, we consider a partition of $\Omega = \cup_{i=1}^m \Omega_{i} $ into $m$ disjoint subdomains (voxels) $\Omega_{i}$. Then, from each CFI image we collect
\begin{equation}
\ell_i(u) = \int_{\Omega_i} u \cdot b ~ \dx,\quad 1\leq i\leq m,
\label{eq:the_measures1D}
\end{equation}
where $b$ is a unitary vector giving the direction of the ultrasound beam. According to what has been exposed in section \ref{sec:SE}, the $\ell_i$ are linear functionals from a certain Hilbert space $V$ which, in our case, is yet to be defined.

Note that, in this setting, pressure is an unobserved, invisible quantity so it cannot be recovered directly from the measurements. In other words, we cannot build a reconstruction mapping $A$ from the velocity observations to the pressure $p$ and provide good recovery guarantees in terms of the recovery error \eqref{eq:err-A-wc} or \eqref{eq:err-A-ms}. However, we prove in the following that it is actually possible to build a mapping $A$ to reconstruct \emph{jointly} the couple $(u,p)$ with good recovery guarantees.

As it seems natural, the joint reconstruction strategy requires necessarily some physical modelling to help to reduce the ill-posedness in $u$, and especially the one in $p$. This important ingredient comes, in our case, from the following incompressible Navier-Stokes equations defined on $\Omega\times [0,T]$. For a fluid with density $\rho\in\mathbb{R}^+$ and dynamic viscosity $\mu\in\mathbb{R}^+$, we search for all $t\in [0,T]$ the couple $(u(t),p(t))$ of velocity and pressure such that

\begin{equation}
\left\lbrace
\begin{aligned}
\rho \frac{\partial u}{\partial t}(t)  + \rho \(\nabla u(t)\) u(t)- \mu \Delta u(t) + \nabla p(t) = 0,\quad\text{ in } \Omega  \\ 
\nabla \cdot u(t) = 0,\quad \text{ in } \Omega.
\end{aligned}
\right.
\label{eq:Navier_Stokes}
\end{equation}

The equations are closed by prescribing an initial condition and boundary conditions. We defer their detailed description to section \ref{sec:sampling_M} for the sake of brevity in the current discussion. At this point, the essential information is that a weak formulation of this equation makes the problem be well posed when we seek $(u(t), p(t))$ in
$$
V = U \times P = [H^1(\Omega)]^3 \times L^2(\Omega),
$$
which is the ambient space we work with and measure errors later on.

Our Navier-Stokes model involves some parameters $y \in Y\in \bR^p$, e.g., the heart rate. An important detail is that we consider time as a parameter so $t$ will be one of the coordinates of $y$. A manifold $\cM$ is generated by the variations of the parameters
$$
\cM \coloneqq \{  \left( u(y), p(y) \right) \in V \, :\, y \in \rY \}.
$$

\subsection{Optimal reconstruction algorithms}
\label{sec:optimalAlgos}
In general, one would like to use an algorithm $A$ that is optimal in the sense of minimizing
$$
\inf_{A: \Wm \to V} E_{\wc}(A,\cM),\quad \text{or} \quad \inf_{A: \Wm \to V} E_{\ms}(A,\cM).
$$
However, as discussed in \cite{CDDFMN2019}, optimal algorithms are difficult to compute and even to characterize for general sets $\cM$. In this respect, the following is known:
\begin{itemize}
\item The problem of finding an algorithm $A$ that minimizes $E_{\wc}(A,\cM)$ is called \emph{optimal recovery}. It has been extensively studied for convex sets $\cM$ that are balls of smoothness classes. This is however not the case in the current setting since the solution manifold $\cM$ introduced in \eqref{eq:manifold} usually has a complex geometry. We know that as soon as $\cM$ is bounded there is a simple mathematical description of an optimal algorithm in terms of Chebyshev centers of certain sets. However, this algorithm cannot be easily computed due to the geometry and high dimensionality of the manifold.
\item The problem of finding an algorithm $A$ that minimizes $E_{\ms}(A,\cM)$ falls into the scope of \emph{Bayesian} or \emph{learning problems}. As explained in \cite{CDDFMN2019}, if the probability distribution on the manifold $\cM$ is Gaussian, the optimal algorithm can easily be characterized and computed. However, the assumption on a Gaussian distribution is very strong and will not hold in general so finding a computable optimal algorithm in the mean-square sense is also an open problem.
\end{itemize}
These theoretical difficulties motivate the search for suboptimal yet fast and good recovery algorithms. One vehicle for this has been to build linear recovery algorithms $A\in\cL(\Wm, V)$ (see \cite{MMPY2015, MPPY2015, CDDFMN2019}). However, since in general it is not clear that linear algorithms will be optimal, we use in this work piecewise linear  reconstruction algorithms as a trade-off between optimality and computational feasibility and rapidity.

%A suboptimal, yet simple and competitive linear algorithm is the so-called Parameterized Background Data-Weak method (PBDW, \cite{}), which is the 

\subsection{Piecewise linear reconstruction algorithms using reduced modeling}
\label{sec:algos}
Reduced models are a family of methods that produce each a hierarchy of spaces $(V_n)_{n\geq 1}$  that approximate the solution manifold well in the sense that
\begin{equation}
\label{eq:error-manifold}
\eps_n \coloneqq \sup_{u\in\cM} \dist(u, V_n)  \,
,\quad \text{or} \quad
\delta^2_n \coloneqq \bE\left(  \dist(u, V_n)^2\right) \, 
\end{equation}
decays rapidly as $n$ grows for certain classes of PDEs. The term $\dist(u, V_n)$ denotes the distance from $u$ to the space $V_n$, which is given by its projection error onto $V_n$,
$$
\dist(u, V_n) = \Vert u - P_{V_n} u \Vert, \quad \forall u \in V.
$$
Several methods exist to build spaces such that $(\eps_n )_{n\geq1}$ or $(\delta_n )_{n\geq1}$ decay fast. Some families are the reduced basis method (see \cite{RHP2007}), the (Generalized) Empirical Interpolation Method (see \cite{BMNP2004, MM2013, MMT2016}), Proper Orthogonal Decomposition (POD, \cite{sirovich1987,berkooz1993}) and low-rank methods (see \cite{CDS2011, CD2015acta}).

Linear reconstruction algorithms $A:W_m\to V$ that make use of reduced spaces $V_n$ are the Generalized Empirical Interpolation Method (GEIM) introduced in \cite{MM2013} and further analyzed in \cite{MMPY2015, MMT2016} and the Parameterized Background Data-Weak Approach (PBDW) introduced in \cite{MPPY2015} and further analyzed in \cite{BCDDPW2017}. Some extensions have been proposed to address measurement noise (see, e.g., \cite{ABGMM2017, Taddei2017}) and  other recovery algorithms involving reduced modelling have also been recently proposed (see \cite{KGV2018}). 

\subsubsection{PBDW, a linear recovery algorithm}
\label{sec:linearPBDW}
In this work, we take PBDW as a starting point for our recovery algorithm. Given a measurement space $\Wm$ and a reduced model $V_n$ with $1\leq n\leq m$, the PBDW algorithm $$A^\pbdw_{m,n}:\Wm \to V$$ gives for any $\omega \in \Wm$ a solution of
\begin{equation*}
\min_{u \in \omega + W^\perp } \dist(u, V_n).
\end{equation*}
This optimization problem has a unique minimizer 
\begin{equation}
\label{eq:uStar}
A^\pbdw_{m,n}(\omega) = u_{m,n}^*(\omega) \coloneqq \argmin_{u = \omega + W^\perp } \dist(u, V_n).
\end{equation}
as soon as $n\leq m$ and $\beta(\Vn,\Wm)>0$, which is an assumption to which we adhere in the following. For any pair of closed subspaces $(E,F)$ of $V$, $\beta(E, F)$ is defined as
\begin{equation}
\label{eq:infsup}
\beta(E,F):=\inf_{e\in E}\sup_{f\in F}\frac {\<e,f\>}{\|e\|\, \|f\|}=\inf_{e\in E}\frac {\|P_{F} e\|}{\|e\|} \in [0,1]
\end{equation}

As proven in \ref{appendix:linear-pbdw}, an explicit expression of $u_{m,n}^*(\omega)$ is
\begin{equation}
\label{eq:uStarExplicit}
u_{m,n}^*(\omega) = v^*_{m,n}(\omega) + \omega - P_\Wm v^*_{m,n}(\omega)
\end{equation}
with
\begin{equation}
\label{eq:vStarExplicit}
v^*_{m,n}(\omega) = \( P_{\Vn | \Wm} P_{\Wm | \Vn} \)^{-1} P_{\Vn | \Wm}(\omega),
\end{equation}
where, for any pair of closed subspaces $(X,Y)$ of $V$, $P_{X | Y}: Y \to X$ is the orthogonal projection into $X$ restricted to $Y$. The invertibility of the operator $P_{\Vn | \Wm} P_{\Wm | \Vn}$ is guaranteed under the above conditions.

Formula \eqref{eq:uStarExplicit} shows that $A^\pbdw_n$ is a bounded linear map from $\Wm$ to $\Vn\oplus(W_m\cap V_n^\perp)$. For any $u\in V$, the reconstruction error is bounded by
\begin{equation}
\Vert u - A^\pbdw_{m,n}(\omega) \Vert
\leq \beta^{-1}(V_n, \Wm) \Vert u - P_{\Vn\oplus(W_m\cap V_n^\perp)} u\Vert
\leq \beta^{-1}(V_n, \Wm) \Vert u - P_{\Vn}u \Vert
\label{eq:pbdw_bound}
\end{equation}
Depending on whether $\Vn$ is built to address the worst case or mean square error, the reconstruction performance over the whole manifold $\cM$ is bounded by
\begin{equation}
\label{eq:err-wc-pbdw}
e_{m,n}^\wcpbdw \coloneqq
E_{\wc}(A^\pbdw_{m,n}, \cM) \leq \beta^{-1}(V_n, \Wm) \max_{u\in \cM}\dist(u, V_n\oplus(V_n^\perp\cap\Wm)) \leq \beta^{-1}(V_n, \Wm) \, \eps_n,
\end{equation}
or
\begin{align}
e_{m,n}^\mspbdw \coloneqq
E_{\ms}(A^\pbdw_{m,n}, \cM)
&\leq \beta^{-1}(V_n, \Wm) \bE\left(\dist(u, V_n\oplus(V_n^\perp\cap\Wm)^2\right)^{1/2}  \nonumber\\
&\leq \beta^{-1}(V_n, \Wm) \, \delta_n, \label{eq:err-ms-pbdw}
\end{align}
Note that $\beta(\Vn, \Wm)$ can be understood as a stability constant. It can also be interpreted as the cosine of the angle between $\Vn$ and $\Wm$. The error bounds involve the distance of $u$ to the space $V_n\oplus(V_n^\perp\cap\Wm)$ which provides slightly more accuracy than the reduced model $\Vn$ alone. 
\dl{The additional approximation power given by $V_n^\perp\cap\Wm$ comes from the term $\omega- P_{W_m}v^*_{m,n}(\omega)$. As originally discussed in \cite{MPPY2015}, this term helps to correct to some extent the model error in the sense of making the final reconstruction be in a larger space than $V_n$, which is built from the parametric PDE model.} In the following, to ease the reading we will write errors only with the second type of bounds that do not involve the correction part on $V_n^\perp\cap\Wm$.

An important observation is that for a fixed measurement space $W_m$ (which is the setting in our numerical tests), the error functions
$$
n\mapsto e_{m,n}^\wcpbdw,\quad \text{and} \quad n\mapsto e_{m,n}^\mspbdw
$$
reach a minimal value for a certain dimension $n^*_\wc$ and $n^*_\ms$ as the dimension $n$ varies from 1 to $m$. This behavior is due to the trade-off between:
\begin{itemize}
\item the improvement of the approximation properties of $V_n$ as $n$ grows ($\eps_n$ and $\delta_n \to 0$ as $n$ grows)
\item the degradation of the stability of the algorithm, given here by the decrease of $\beta(V_n, \Wm)$ to 0 as $n\to m$. When $n> m$, $\beta(V_n, \Wm)=0$.
\end{itemize}
As a result, the best reconstruction performance with  PBDW is given by
$$
e_{m,n^*_\wc}^\wcpbdw = \min_{1\leq n \leq m} e_{m,n}^\wcpbdw,
\quad\text{or}\quad
e_{m,n^*_\ms}^\mspbdw = \min_{1\leq n \leq m} e_{m,n}^\mspbdw.
$$

We finish this section with \dl{five} remarks:
\begin{enumerate}
\item \fg{
In our application, the manifold $\mathcal{M}$ will be a family of incompressible fluid flow solutions of a parametric incompressible Navier-Stokes equation. In this case, $V_n$ is usually built such that all functions $v \in V_n$ satisfy the divergence-free condition $\nabla \cdot v = 0$  As a result, a reconstruction with only $v^*_{m,n}$ (see Appendix A, equation A1) will yield a divergence-free approximation of the flux. Note however that the full PBDW reconstruction $u_{m,n}^*$ does not guarantee this property since the model bias corrector $\omega - P_{W_m} v_{m,n}^*$ may not be divergence free. The mass conservation of the reconstruction could be enforced, for instance, by considering its projection on divergence-free fields. That is to say, let us take a reconstructed vector field $u_{m,n}^*$. A Helmholtz decomposition leads to the vector field $v_{\text{Helmholtz}}$ and an scalar field $\phi_{\text{Helmholtz}}$ such that $u_{m,n}^* = \nabla \times v_{\text{Helmholtz}} - \nabla \phi_{\text{Helmholtz}}$. One might solve therefore the following Laplacian problem: Find $\phi_{\text{Helmholtz}} \in H^1(\Omega)$ such that:
\begin{equation}
\begin{aligned}
 - \Delta \phi_{\text{Helmholtz}} = \nabla \cdot u_{m,n}^* &  ~~\text{in}~~ \Omega \\
\phi_{\text{Helmholtz}} = 0 &~~\text{on} ~~ \Gamma_{\inlet} \\
\nabla \phi_{\text{Helmholtz}} \cdot n = 0 & ~~\text{on}~~ \Gamma_\outlet^1 \\
\nabla \phi_{\text{Helmholtz}} \cdot n = 0 & ~~\text{on}~~ \Gamma_\outlet^2 \\
\nabla \phi_{\text{Helmholtz}} \cdot n = 0 & ~~\text{on}~~ \Gamma_\text{w}
\end{aligned}
\end{equation}
The solenoidal field will thus be $u_{m,n}^* + \nabla \phi_{\text{Helmholtz}}$. Notice that the inlet boundary condition is arbitrary but nonetheless it does not influence the results since we are interested on the gradient of the field.
}
\item As already brought up, we do not consider model error in this work. However, the PBDW algorithm can correct to some extent the model misfit (through the term $\eta^*$, see \ref{appendix:linear-pbdw}).
\item Note that in the present setting the measurement space $\Wm$ is fixed and we will adhere to this assumption in the rest of the paper. In our application, this is reasonable since the nature and location of the sensors is fixed by the technology of the device and by the position of the probe which the medical doctor considers best. A different, yet related problem, would be to optimize the choice of the measurement space $\Wm$. Two works on this topic involving greedy algorithms are \cite{MMT2016, BCMN2018}. They have been done under the same setting involving reduced modelling that is presented in this work. More generally, the problem of optimal sensor placement has been extensively studied since the 1970's in control and systems theory (see, e.g. \cite{Ai,CK,YS}). One common feature with \cite{MMT2016, BCMN2018} is that the criterion to be minimized by the optimal location is nonconvex, which leads to potential difficulties when the number of sensors is large.
\item \om{Very often, the reduced spaces $V_n$ that are used come from forward reduced modelling methods which produce a sequence of nested spaces $V_1\subset V_2 \subset \dots$. These spaces have very good approximation properties in the sense that the sequence $(\eps_n)$ (or $(\delta_n)$) has a quasi-optimal decay rate which is comparable to the Kolmogorov width for elliptic and parabolic problems (see \cite{BCDDPW2011}). Therefore, they give an almost optimal linear approximation for the forward problem of mapping the parameters to their PDE solution. However, since here we are dealing with state estimation, which is an inverse problem, the spaces for forward modelling may not be optimal for reconstruction and the question of finding the optimal space arises. This topic goes beyond the present work but quantitative answers to the question have been given in  \cite{CDDFMN2019}. Also, in \cite{galarce2020}, several practical algorithms to build spaces $V_n$ better tailored for inverse problems were explored and tested in similar test cases as the ones of the present paper.}
\item \dl{The nature of the sensing device or imaging technique is given through the linear functionals $\ell_i$, which model the sensor's physical response. In the present work, the $\ell_i$ model Doppler Ultrasound images (see \eqref{eq:the_measures1D}). Other types of sensor devices or imaging techniques can be taken into account by adapting the definition of the $\ell_i$. For example, in applications involving  MRI, radio-astronomy or diffraction tomography, the $\ell_i$ give samples from the Fourier transform (see \cite{VGCR2010}).}
\end{enumerate}

\subsubsection{Piecewise linear reconstruction algorithm}
\label{sec:piecewise}

In our application, we can build an improved reconstruction algorithm by exploiting the fact that we are not only given a Doppler image at the time of reconstruction, but we also know in real-time the value of some parameters like time and the heart-rate of the patient. In other words, the vector or parameters $y$ can be decomposed into a vector $y^{\obs}$ of $p^{\obs}$ parameters ranging in $Y^{\obs}\subseteq \bR^{\obs}$ and a vector $y^{\unobs}$ of $p^{\unobs}$ unobserved parameters ranging in $Y^{\unobs}\subseteq \bR^{\unobs}$, with $p=p^{\obs}+p^{\unobs}$. In other words,
$$
y = (y^{\obs}, y^{\unobs}) \in Y^{\obs}\times Y^{\unobs}
$$
We can exploit this extra knowledge by building a partition of the parameter domain $\rY$ as follows: we first find an appropriate partition of the observed parameters into $K$ disjoint subdomains
$$
Y^{\obs} = \cup_{k=1}^K Y^{\obs}_k,\quad\text{and}\quad Y^\obs_k \cap Y^\obs_{k'}=\emptyset, \, k\neq k'.
$$
The strategy followed to find such a partition in our case is explained in Section \ref{subsec:optimal_n}. This yields a partition of the whole parameter domain
\begin{equation}
\label{eq:partition-Y}
Y = \cup_{k=1}^K Y_k, \quad \text{with } Y_k = Y_k^\obs \times Y^\unobs.
\end{equation}
This partition induces a decomposition of the manifold $\cM$ into $K$ different disjoint subsets $\cM_k$ such that
\begin{equation}
\label{eq:partition-M}
\cM = \cup_{k=1}^K \cM_k,\quad\text{and}\quad \cM_k\cap \cM_{k'}=\emptyset, \, k\neq k'.
\end{equation}

With this type of partition, we know in which subset we are at the time of reconstruction. We can thus build reduced models $(V_n^{(k)})_{n=1}$ for each subset $\cM_k$ and then reconstruct with the linear or affine PBDW. Proceeding similarly as in the previous
section, the reconstruction performance on subset $\cM_k$ is, for a fixed $n\leq m$,
\begin{equation*}
e_{m,n}^\wcaffk = E_{\wc}(A_{m,n}, \cM_k) \leq \beta^{-1}(V_n^{(k)}, \Wm) \, \eps^{(k)}_n,
\end{equation*}
or
%\begin{equation*}
%e_{m,n}^\msaffk = E_{\ms}(A_{m,n}, \cM_k) \coloneqq \bE\left( \Vert u - A_{m,n}(P_\Wm u)\Vert^2\right)^{1/2}\leq \beta^{-1}(V_n^{(k)}, \Wm) \, \delta^{(k)}_n.
%\end{equation*}
\begin{equation*}
e_{m,n}^\msaffk = E_{\ms}(A_{m,n}, \cM_k) \coloneqq \bE\left( \Vert u - A_{m,n}(P_\Wm u)\Vert^2\right)^{1/2}\leq \beta^{-1}(V_n^{(k)}, \Wm) \, \delta^{(k)}_n.
\end{equation*}
The advantage of this piecewise approach is that the approximation errors  $(\eps^{(k)}_n)_n$ or $(\delta^{(k)}_n)_n$ in each subdomain $\cM_k$ may decay faster than in the whole manifold (sometimes even significantly faster if each partition deals with very different physical regimes).

The best reconstruction performance for $\cM_k$ is thus 
$$
e_{m,n^*_\wc(k)}^\wcaffk = \min_{1\leq n \leq m} e_{m,n}^\wcaffk,
\quad\text{or}\quad
e_{m,n^*_\ms(k)}^\msaffk = \min_{1\leq n \leq m} e_{m,n}^\msaffk.
$$
It follows that the performance in $\cM=\cup_{k=1}^K \cM_k$ is
$$
e_{m}^\wcaff = \max_{1\leq n \leq m} e_{m,n^*_\ms(k)}^\wcaffk,
\quad\text{or}\quad
e_{m,n^*_\ms(k)}^\msaffk = \sum_{k=1}^K \omega_k e_{m,n^*_\ms(k)}^\msaffk,
$$
where $\omega_k = p( u\in \cM_k )$.

%\paragraph*{How to build the partition on $Y$ in practice:} \om{[Put here or later?]}

%\section{How to reconstruct the invisible in fluid flows}
\section{Reconstruction of non-observable Quantities of Interest in fluid flows}
Our task is to use Doppler velocity measurements taken from a fluid flow and to reconstruct:
\begin{itemize}
\item \textbf{Partially observable quantities:} the full 3D velocity flow in $\Omega$ and related quantities such as the wall shear stress and vorticity.
\item \textbf{Non-observable quantities:} the full 3D pressure flow in $\Omega$ and the pressure drop.
\end{itemize}
Our strategy to address this task consists essentially in two steps:
\begin{itemize}
\item We apply the piecewise linear reconstruction algorithm of section \ref{sec:piecewise} where the key is to do a \emph{joint reconstruction} of 3D velocity and pressure.
\item We then derive the related quantities of interest as a simple by-product (wall shear stress and vorticity).
\end{itemize}

\subsection{Joint reconstruction of velocity and pressure}
\label{sec:joint_up}
For the reasons explained in section \ref{sec:instantiation}, the couple $(u, p)$ of velocity and pressure belongs to the Cartesian product
$$
V = U \times P = [H^1(\Omega)]^d \times  L^2(\Omega)
$$
It is assumed to be the solution to the parameter-dependent Navier-Stokes equations \eqref{eq:Navier_Stokes} for some parameter $y\in Y$. Some elements $y^\obs$ are observed but others are not so we cannot directly solve \eqref{eq:Navier_Stokes} with the parameters set to $y$. We therefore use the piecewise linear reconstruction of section \ref{sec:piecewise}. For this, it is necessary to endow $V$ with the external direct sum and product structure to build a Hilbert space. That is, for any two elements $(u_1, p_1)$ and $(u_2, p_2)$ of $V=U\times P$ and any scalar $\alpha\in \bR$,
$$
(u_1, p_1)+(u_2, p_2) = (u_1 + u_2, p_1+p_2),\quad \alpha(u_1, p_1) = (\alpha u_1, \alpha p_1)
$$
The inner product is defined as the sum of component-wise inner products
$$
\left<  (u_1, p_1), (u_2, p_2)  \right>_V = \left<  u_1, u_2  \right>_U +  \left<  p_1, p_2  \right>_P, 
$$
and it induces a norm on $V$,
$$
\Vert (u,p) \Vert \coloneqq \left( \left<  (u, p), (u, p)  \right>_V\right)^{1/2}, \quad \forall (u,p) \in V.
$$

When we are given partial information on $(u,p)$ from Doppler velocity measures, we are given the projection
$$\omega = P_{W_m}(u,p)
$$
where $W_m$ is the observation space
$$
\Wm \coloneqq W_m^{(u)} \times \{0\} = \vspan\{\omega_1,\dots, \omega_m\} \times \{0\} \subset V
$$
and the $\omega_i$ are the Riesz representers in $U$ of each voxel $\ell_i \in U'$,
$$
\left< \omega_i , v \right>_U = \ell_i(v) = \int_{\Omega_i} v \cdot b\, \dx, \quad \forall v \in U.
$$

We are now in position to apply directly the reconstruction algorithms from section \ref{sec:RecMeth} to do the joint reconstruction of $(u,p)$ with the current particular choice of Hilbert space $V$ and observation space $\Wm$. We briefly instantiate here the main steps. Let us assume that we have a reduced model
$$
\Vn \coloneqq \vspan \{(u_1,p_1),\dots, (u_n, p_n)\}
$$
of dimension $n\leq m$ that approximates
$$
\cM \coloneqq \{  \left( u(y), p(y) \right) \in V \, :\, y \in \rY \}
$$
with accuracy
\begin{equation}
\label{eq:error-manifold}
\eps_n \coloneqq \sup_{(u,p)\in\cM} \dist((u,p), V_n)  \,
,\quad \text{or} \quad
\delta^2_n \coloneqq \bE\left(  \dist((u,p), V_n)^2\right) \, 
\end{equation}
and which is such that $\beta(\Vn, \Wm)>0$. Then, we can reconstruct with the linear PBDW method (see equation \eqref{eq:uStar}) which, in the present case, reads
\begin{equation}
A^\pbdw_{m,n}(\omega) = (u_{m,n}^*(\omega), p_{m,n}^*(\omega)) \coloneqq \argmin_{ (u, p) = \omega + W^\perp } \Vert (u,p) -P_{V_n}(u,p)\Vert.
\end{equation}
The worst and average reconstruction errors are bounded like in estimates \eqref{eq:err-wc-pbdw} and \eqref{eq:err-ms-pbdw}, that is
\begin{equation}
e_{m,n}^\wcpbdw =\max_{(u, p)\in \cM} \Vert (u,p)-(u_{m,n}^*(\omega), p_{m,n}^*(\omega)) \leq \beta^{-1}(V_n, \Wm) \, \eps_n,
\end{equation}
or
\begin{align}
e_{m,n}^\mspbdw 
&= \bE\left( \Vert u - A^{(\pbdw)}_{m,n}(P_\Wm u)\Vert^2\right)^{1/2} \leq \beta^{-1}(V_n, \Wm) \, \delta_n,
\end{align}
If we build a partition of the manifold $\cM$ based on observed parameters, we can reconstruct with the piecewise linear algorithm of section \ref{sec:piecewise}.

\om{Before moving to the next section, we would like to note that, in general, stability is degraded in the joint reconstruction compared to the single velocity reconstruction. In fact, if the reduced model is taken as a product of two reduced spaces, namely, if $V_n = V_{n_u}^{(u)}\times V_{n_p}^{(p)}$ with $V_{n_u}^{(u)}\subset U$ and $V_{n_p}^{(p)}\subset P$, we can easily prove that if the inf-sup constant in the single velocity space $\beta(V_{n_u}^{(u)}, W_m^{(u)})>0$ (with $V_{n_u}^{(u)}$ and $W_m^{(u)} \in U$), then the inf-sup $\beta(V_{n_u}^{(u)}\times V_{n_p}^{(p)}, W_m^{(u)}\times\{0\})$ of the joint reconstruction satisfies
$$
0 < \beta(V_{n_u}^{(u)}\times V_{n_p}^{(p)}, W_m^{(u)}\times\{0\}) \leq \beta(V_{n_u}^{(u)}, W_m^{(u)}).
$$
On the one hand, the left-hand side of the bound tells that the joint reconstruction is well-posed as soon as the single velocity reconstruction is. On the other hand, the right-hand side says that stability cannot be better than the one of the single velocity reconstruction.}

\subsection{Reconstruction of related quantities}

\subsubsection{Pressure drop}
\label{sec:pdrop}
The pressure drop is a quantity that has traditionally been of high interest to the medical community since it serves to assess, for instance, the severity of stenosis in large vessels due to the accumulation of fat in the walls. Decomposing the domain boundary $\partial \Omega$ of a generic arterial bifurcation into the inlet, the wall and the outlet parts
$$
\partial \Omega = \Gamma_{\inlet} \cup \Gamma_w \cup \Gamma_{\outlet}^1 \cup \ldots \cup \Gamma_{\outlet}^l,
$$
the quantities to retrieve are
\begin{equation}
\delta p_i = \frac{1}{\abs{\Gamma_{\inlet}}} \int_{\Gamma_{\inlet}} p ~ \ds - \frac{1}{\abs{\Gamma_{\outlet}^i}} \int_{\Gamma_{\outlet}^i} p ~\ds,
\label{eq:pdrop}
\end{equation}
for the outlet labels $i=1,\ldots,l$.

\paragraph{Method 1 -- From the joint reconstruction $(u^*_n, p^*_n)$:} If we reconstruct $(u^*_n, p^*_n)$, we can easily approximate the pressure drop by
%\begin{equation*}
%\Delta^*_i= \frac{1}{\abs{\Gamma_{\inlet}}} \int_{\Gamma_{\inlet}} p^*_n  - \frac{1}{\abs{\Gamma_{\outlet}^i}} \int_{\Gamma_{\outlet}^i} p^*_n,\quad \forall i \in \{ 1, 2\}.
%\end{equation*}
\begin{equation*}
\delta p_i^* = \frac{1}{\abs{\Gamma_{\inlet}}} \int_{\Gamma_{\inlet}} p^*_n ~\ds  - \frac{1}{\abs{\Gamma_{\outlet}^i}} \int_{\Gamma_{\outlet}^i} p^*_n ~\ds
\end{equation*}
for $i=1,\ldots,l$.
%As we will see in our numerical results, the pressure drop is approximated at very high accuracy with $\Delta^*_i$ . We next provide a theoretical justification.

As we will see in our numerical results, the pressure drop is approximated at very high accuracy with $\delta p_i^*$ . We next provide a theoretical justification.

For this, we remark that we can view $\delta p_i$ as a bounded linear mapping from $V=U\times P$ to $\bR$ defined as
$$
\delta p_i ( (u, p) ) = \frac{1}{\abs{\Gamma_{\inlet}}} \int_{\Gamma_{\inlet}} p ~ \ds - \frac{1}{\abs{\Gamma_{\outlet}^i}} \int_{\Gamma_{\outlet}^i} p ~ \ds,\quad \forall (u,p) \in V.
$$
Thus the reconstruction error is given by
$$
\vert \delta p_i ( (u,p) ) - \delta_i p ( (u^*_n,p^*_n) ) \vert .
$$
Exploiting the linearity of $\delta p_i$, one can derive the simple bound
\begin{align}
\vert \delta p_i ( (u,p) ) - \delta p_i ( (u^*_n,p^*_n) ) \vert
&= \vert \delta p_i \left( \left(u,p\right) - \left(u^*_n,p^*_n\right) \right) \vert \\
&\leq \Vert \delta p_i \Vert_{V'} \Vert (u,p) - (u^*_n,p^*_n) \Vert \\
&\leq \Vert \delta p_i \Vert_{V'} \beta^{-1}(\Vn, \Wm) \Vert (u,p) - P_{\Vn} (u,p) \Vert \\
&\leq \Vert \delta p_i \Vert_{V'} \beta^{-1}(\Vn, \Wm) \eps_n
\end{align}
where we have used \eqref{eq:err-wc-pbdw} between the second and the third line and where
$$
\Vert \delta p_i \Vert_{V'} \coloneqq \sup_{(u,p)\in V} \frac{\vert \delta p_i (u,p) \vert}{\Vert (u,p) \Vert} \geq 1 .
$$
As we will see below, this estimate is too coarse to account for the high reconstruction accuracy which is observed because the values $\beta(\Vn, \Wm)$ are close to zero and the product $\beta^{-1}(\Vn, \Wm) \eps_n$ is only moderately small. It is necessary to find a sharper estimate that involves finer constants in front of $\eps_n$ to account for the good reconstruction results. For this, observing that, by construction of $(u^*_n,p^*_n)$,
$$
P_{W_m}(u,p) = P_{W_m}(u^*_n,p^*_n),
$$
we have
$$
(u,p) - (u^*_n,p^*_n) \in \Wm^\perp
$$
so we can derive the new estimate
\begin{align}
\vert \delta p_i ( (u,p) ) - \delta p_i ( (u^*_n,p^*_n) ) \vert
&\leq \kappa_{m,n} \Vert (u,p) - (u^*_n,p^*_n) - P_{V_n}\left( (u,p) - (u^*_n,p^*_n)\right) \Vert \\
&\leq 2\kappa_{m,n} \eps_n
\label{eq:err-p-drop}
\end{align}
with
\begin{equation}
\kappa_{m,n} \coloneqq \sup_{(u,p) \in \Wm^\perp} \frac{ \vert \delta p_i ( (u,p) ) \vert}{ \Vert \dist\left( (u,p), V_n \right) \Vert}
\label{eq:kappa_bound_pdrop}
\end{equation}
As we illustrate in our numerical tests, the value of $\kappa_{m,n}$ is moderate and significantly smaller than the factor $\Vert \delta p_i \Vert_{V'} \beta^{-1}(\Vn, \Wm)$ of the previous estimate. As a result, the product $\kappa_{m,n} \eps_n$ is small, and we guarantee a reconstruction of the pressure with good accuracy.

\paragraph{Method 2 -- From the reconstruction of $u^*_n$ and the virtual works principle:} As an alternative to the joint reconstruction strategy, we can use a method introduced in \cite{bertoglio_pdrop} called Integral Momentum Relative Pressure estimator. As a starting point, it requires to work with a reconstruction $u^*_n$ of the velocity which, in our work, will be given by the PBDW method applied only to the reconstruction of the velocity field without pressure. We then estimate the pressure drop using the Navier-Stokes equations as follows. Assuming that $u^*_n$ satisfies perfectly the momentum conservation \eqref{eq:Navier_Stokes}, we test by a virtual and divergence free velocity field $v \in U$, 
\begin{equation}
\label{eq:IntNSv}
\underbrace{  \rho \int_{\Omega} \partial_t u^*_n \cdot v ~\dx}_{\displaystyle K(u^*_n,v)} 
\underbrace{ + \rho \int_{\Omega} (u^*_n \cdot \nabla u^*_n)\cdot v ~\dx}_{\displaystyle   I_{conv}(u^*_n,v)} 
\underbrace{ + \int_{\Omega} \nabla p \cdot v~\dx}_{\displaystyle I_{press}(p,v)} 
\underbrace{ - \mu\int_{\Omega} \Delta u^*_n \cdot v~\dx}_{ \displaystyle I_{visc}(u^*_n,v)} 
= 0.
\end{equation}
Using Green's identities, we can write
\begin{equation}
\begin{aligned}
I_{\text{conv}}(u^*_n,v) &= \rho \int_{\partial \Omega} (u^*_n \cdot n) (u^*_n \cdot v) ~\ds - \rho \int_{\Omega} (u^*_n \cdot \nabla v) \cdot u^*_n ~\dx. \\
I_{\text{visc}}(u^*_n,v) &= \mu\int_{\Omega} \nabla u^*_n : \nabla v ~\dx - \mu \int_{\partial \Omega}  (\nabla u^*_n \cdot n) \cdot v ~\ds. \\
I_{\text{press}}(u^*_n,v) &=  \int_{\partial \Omega} p  (v \cdot n) ~ \ds - \int_{\Omega} p  (\nabla \cdot v) ~\dx.  \\
\end{aligned}
\label{eq:int_by_parts}
\end{equation}
The current strategy requires to assume that the pressure field is constant over the inlet and outlets. Notice that, since $\nabla \cdot v = 0$, the following identity holds,
\begin{equation}
  I_{\text{press}}(p,v) = p \int_{\partial \Omega} v \cdot n ~ \ds= p_{in} \int_{\Gamma_\inlet} v \cdot n ~\ds + \sum_{i=1}^l p_{out}^i \int_{\Gamma_\outlet^i} v \cdot n ~\ds,
  \label{eq:h_p}
\end{equation}
where $p_{in}$ is the average pressure over $\Gamma_{in}$ and $p_{out}^i$ is the average pressure over the i-th outlet $\Gamma_{\text{out}}^i$. For $j=1,\dots, l$, we consider a function $v_j\in V$ satisfying $\nabla \cdot v_j = 0$ and $v_j = 0$ in $\Gamma_w$. Mass conservation for incompressible regimes implies
\begin{equation}
\label{eq:mass_cons_for_v}
\int_{\Gamma_{\inlet}} v_j \cdot n ~ \ds + \sum_{i=1}^l \int_{\Gamma_\outlet^i} v_j \cdot n ~ \ds = 0,
\end{equation}
for $j=1,\cdots,l$. As a result, it is possible to recover the mean pressure drop $x_j = p_{out}^j - p_{in}$ for each outlet $j=1,\cdots,l$ by solving an $l \times l$ system of equations 
\begin{equation}
F x = H(u^*_n),
\label{eq:Fmat}
\end{equation}
where $F \in \mathbb{R}^{l \times l}$ has entries
\begin{equation}
  F_{ij} = \int_{\Gamma_{\outlet}^j} v_i \cdot n ~\ds, 
  \label{eq:Aij_press_estimation}
\end{equation}
and,
\begin{equation}
  H_i(u^*_n)= -\left( I_{\text{visc}}(u^*_n,v_i) + I_{\text{conv}}(u^*_n,v_i) + K (u^*_n, v_i) \right).
  \label{eq:H_i}
\end{equation}
\om{The inversion of the system \eqref{eq:Fmat} is made trivial when the $v_i$ are chosen so that $F$ becomes diagonal (with nonzero entries). We remark that this is achieved if we choose the $v_i$ to be divergence-free and to have outgoing zero flux in all the outlets $\Gamma_{\text{out}}^j$ for $j\neq i$. For each $i=1,\dots, l$, $v_i$ can be characterized as the unique solution to the following Stokes problems:} Find $v_i \in U$ and $\lambda \in L^2(\Omega)$ auxiliary function such that:
\begin{equation}
\begin{aligned}
-\Delta v_i + \nabla \lambda &= (0,0,0) & \text{ in }& \Omega, \\
\nabla \cdot v_i &= 0 & \text{ in }& \Omega, \\
v_i &= ( 0,  0, 0 ) & \text{ on }& \Gamma_\text{w}, \\ 
v_i &= [ (1, 1, 1) \cdot n ] n & \text{ on }& \Gamma_{\inlet}, \\ 
v_i &= (0, 0, 0) & \text{ on }& \Gamma_{\outlet}^j & \forall j \neq i, \\
\dl{\( \frac{1}{2} \left( \nabla v_i + \nabla^T v_i \right)   + \lambda I_{3 \times 3}\)n} &\dl{=(0,0,0)}  & \dl{\text{ on }} & \dl{\Gamma_{out}^i}.
\end{aligned}
\label{eq:stokes_convenient_pdrop}
\end{equation}

%Even thought that the test functions $v_1,\cdots,v_l$ may in principle be any function in $H^{div}(\Omega)$, we would like to have a systematic way to select them out of a sample of functions in this space, say a dictionary $\mathcal{D}$, such that the condition number of the flux matrix stays low. In order to achieve this, we propose to use the following greedy algorithm:
%\begin{itemize}
%\item For $k=1$: 
%\begin{equation}
%v_1 = \argmax_{v \in \mathcal{D}} \norm{v}
%\label{eq:selecting_w_1_IMRP}
%\end{equation}
%\item For $k=2,\cdots,l$: 
%\begin{equation}
%v_k = \argmax_{v \in \mathcal{D}_k} \langle a(v) - \mathbb{P}_{F_k} a(v), a(v) \rangle
%\label{eq:selecting_w_k_IMRP}
%\end{equation}
%where $F_k = \vspan \{a(v_1), \cdots, a(v_{k-1})\}$, $\mathcal{D}_k = \mathcal{D}  \text{\textbackslash} \left\lbrace v_1,\cdots,v_{k-1} \right\rbrace$ and $a: \mathcal{H}^{div}(\Omega) \rightarrow \mathbb{R}^l$ stands for:
%\begin{equation}
%a(v) = \left( \int_{\Gamma_o^1} v \cdot n,\cdots,\int_{\Gamma_o^l} v \cdot n \right)
%\label{eq:linearFormFlux}
%\end{equation}
%\end{itemize}
%It is easy to see that $\kappa(F) = \norm{F}\norm{F^{-1}} \rightarrow 1$ as $\# \mathcal{D} \rightarrow \infty$.

In order to ensure good stability when doing the time integration of \eqref{eq:IntNSv}, we use the Cranck-Nicholson scheme.

We may note that this method requires the knowledge of the flow viscosity and density, and it assumes constant pressure over the inlets and outlets. This is contrast to the joint reconstruction approach which does not need these assumptions.

\subsubsection{Wall shear stress}
\label{subsubsec:wss}
The wall shear stress (WSS) has been proposed as an index of damage in vascular endothelial cells and atherosclerosis, a disease in which the blood coagulates close to the vessel walls. The works \cite{koskinas2009}, \cite{heo2014} or \cite{zarins1983} can serve as a reference. 

%Let us use the notation $\cH^{1/2} = \{ \tau(u);~u \in U \}$, where $\tau(\cdot) : U \rightarrow L^2(\Gamma_w)$ is a trace operator. Let us denote $\wss : U \rightarrow \cH^{1/2}$ the wall shear stress on $\Gamma_w$. 

The WSS \fgal{is a mapping $\wss: U \rightarrow [H^{-1/2}(\Gamma_\text{w})]^3$, $\wss : \Omega \mapsto \bR^3$} that returns the tangential component of the force that the blood applies on the vessel wall
\begin{equation}
\wss(u) \coloneqq \dl{2\mu} \left\lbrace I - n \otimes n \right\rbrace \left( \frac{\nabla u + \nabla u^T}{2} n \right),\text{ on } \Gamma_\text{w}.
\label{eq:wss_def}
\end{equation}

\om{Note that \fgal{$\cS(u) \in [H^{-1/2}(\Gamma_\text{w})]^3$} because a velocity solution $u$ of the Navier-Stokes equations satisfies \fgal{$(\nabla u) n \in [L^2(\Omega)]^3$} and, by Green's formula, we can prove that $(\nabla u) n \vert_{\Gamma_\text{w}} \in \fgal{ [H^{-1/2}(\Gamma_\text{w})]^3}$.}
\om{
Our goal is therefore to compute the reconstruction error $\Vert \cS(u) - \cS(u^*) \Vert_{[H^{-1/2}(\Gamma_\text{w})]^3}$.  First, we have that
$$
\Vert \cS(u) - \cS(u^*) \Vert_{[H^{-1/2}(\Gamma_\text{w})]^3}
\lesssim \Vert \lambda_1 \Vert_{[H^{-1/2}(\Gamma_\text{w})]^3}
+
| \overline{\cS(u)} - \overline{\cS(u^*)} |
$$
\fgal{where $\overline{\cS(u)} \coloneqq \int_{\Gamma_\text{w}} \cS(u)(x) ~\dx$ and $\lambda_1$ is a vector field defined as
$$
\lambda_1 \coloneqq  \cS(u) - \cS(u^*) - \left( \overline{\cS(u)} - \overline{\cS(u^*)} \right) \; \in [H^{-1/2}(\Gamma_\text{w})]^3
$$}
is a function of zero mean.
}
\om{
We next prove that there exists a constant $C>0$ such that
\begin{equation}
\label{eq:g}
\Vert \lambda_1 \Vert_{[H^{-1/2}(\Gamma_\text{w})]^3} \leq C \Vert \phi_{\lambda_1} |_{\Gamma_\text{w}} \Vert_{\[L^2(\Gamma_w)\]^3} 
\end{equation}
where $\phi_{\lambda_1} \in [H^1(\Omega)]^3$ is the unique solution to the following homogeneous Laplace equation with Neumann boundary condition: Find $\phi_{\lambda_1} \in [H^1(\Omega)]^3$ with $\overline{\phi_{\lambda_1}}=0$ such that
\fgal{
\begin{equation}
\label{eq:neumann}
\int_\Omega \nabla \phi_{\lambda_1} : \nabla v ~\dx= \int_{\Gamma_\text{w}} \lambda_1 \cdot v~\ds = \left< \lambda_1, \text{Tr}(v) \right>_{ [H^{-1/2}(\Gamma_\text{w})]^3, [H^{1/2}(\Gamma_w)]^3}, \quad \forall v \in [H^1({\Omega})]^3,
\end{equation}
where $\text{Tr}:[H^1(\Omega)]^3\to [H^{1/2}(\Omega)]^3$} is the trace operator. By applying the Cauchy-Schwarz inequality in \eqref{eq:neumann}, we derive
$$
\left< \lambda_1, \text{Tr}(v) \right>_{ [H^{-1/2}(\Gamma_\text{w})]^3, [H^{1/2}(\Gamma_\text{w})]^3}
\leq \Vert \nabla \phi_{\lambda_1} \Vert_{[L^2(\Omega)]^3} \Vert \nabla v \Vert_{[L^2(\Omega)]^3}
\leq  \Vert \phi_{\lambda_1} \Vert_{[H^1(\Omega)]^3} \Vert v \Vert_{[H^1(\Omega)]^3},
$$
from which we deduce that
$$
\Vert \lambda_1 \Vert_{[H^{-1/2}(\Gamma_\text{w})]^3} \leq \Vert \phi_{\lambda_1} \Vert_{[H^1(\Omega)]^3}.
$$
Finally, by continuity of the mapping $\lambda_1 \in [L^2(\Gamma_\text{w})]^3 \mapsto \phi_{\lambda_1} \in [H^1(\Omega)]^3$, there exists a constant $C>0$ such that $\Vert \phi_{\lambda_1} \Vert_{[H^1(\Omega)]^3} \leq C \Vert \lambda_1 \Vert_{[L^2(\Gamma_\text{w})]^3}$. This yields our final, computable bound
$$
\Vert \cS(u) - \cS(u^*) \Vert_{[H^{-1/2}(\Gamma_\text{w})]^3}
\leq C ( \Vert \text{Tr}(\phi_{\lambda_1}) \Vert_{[L^2(\Gamma_\text{w})]^3} 
+
| \overline{\cS(u)} - \overline{\cS(u^*)} | ).
$$
}

\om{
Thus, to evaluate the reconstruction quality of the WSS at a given time $t$ we compute:
\begin{equation}
e_{\text{wss}}(t) = \frac{\Vert \text{Tr}(\phi_{\lambda_1}) \Vert_{[L^2(\Gamma_\text{w})]^3} + | \overline{\cS(u(t))} - \overline{\cS(u^*(t))} |}{\Vert \text{Tr}(\phi_{\lambda_2}) \Vert_{[L^2(\Gamma_\text{w})]^3} + | \overline{\cS(u(t))} |}
\label{eq:error_wss}
\end{equation}
where $\lambda_2 = \cS(u(t)) - \overline{\cS}(u(t))$.
}

\subsubsection{Vorticity}
\label{subsubsec:vort}
The vorticity is defined as $\vorticity = \nabla \times u$. It provides clinical information about the shear layer thickness, which has been correlated with thrombus formation and hemolysis \cite{bluestein2010}. In general, vorticity is connected to the assessment of the cardiovascular function and there have been efforts to reconstruct it from magnetic resonance images (see, for instance: \cite{garcia2013}).

The relative $L^2$ error in time for the vorticity reconstruction $\vorticity^* = \nabla \times u^* = \nabla \times A(P_{W_m}u)$ is given by
\begin{equation}
e_{\text{vorticity}}(t) = \frac{\norm{ \vorticity(t) - \vorticity^*(t)}}{\left( \int \norm{\vorticity(t)}^2 dt \right)^{1/2}},
\label{eq:err_vorticity_time}
\end{equation}

Since $u \in U=[H^1(\Omega)]^3$ with $\nabla\cdot u = 0$ (incompressible flow) and since we have the identity (see, e.g., \cite{gmj2005})
\begin{equation}
\norm{\nabla \times u }^2_{L^2{(\Omega)}} = \norm{\nabla u}^2_{L^2{(\Omega)}} + \norm{\nabla \cdot u }^2_{L^2{(\Omega)}} = \norm{\nabla u}^2_{L^2{(\Omega)}},
\label{eq:rot_identity}
\end{equation}
it follows by \eqref{eq:pbdw_bound} that
\begin{equation}
\Vert \vorticity - \vorticity^* \Vert_{L^2(\Omega)} \leq \Vert u - u^* \Vert_U \leq \beta^{-1}(V_n, W_m) \Vert u - P_{V_n} u \Vert_U
\label{eq:vorticity_bound}
\end{equation}

\section{Noise-free numerical test in a carotid geometry}
\label{sec:noise-free_test}

In what follows, the numerical experiments shown were computed using two softwares, one under continuous development and maintained by the COMMEDIA team at INRIA: the Finite Elements for Life Sciences and Engineering, FeLiScE, and another one implemented especially for this work: the Multi-physics for biomedicAl engineering and Data assimilation, MAD. In addition, tetrahedron meshing and optimization is done using Mmg (see \cite{mmg3d}).

\fg{In the tests, the synthetic ultrasound device is placed in such a way that we measure only in one half of the mid plane of the working domain as illustrated in Figure  \ref{fig:measures_CFI_syn}. The ultrasound wave forms an angle of $\pi/4$ respect to the dominant fluid direction in the main carotid branch, that is to say, $b = \[\sqrt{2}/2, \sqrt{2}/2, 0 \]$. Note that in this situation the device does not sense velocity after the artery bifurcation downstream. This mimics potential anatomical constrains which may arise in a real application context and which may prevent from taking images in the whole domain of interest. We illustrate that our method gives accurate reconstructions in the whole domain}
\begin{figure}[!htbp]
\centering
\includegraphics[height=8cm,angle = -90]{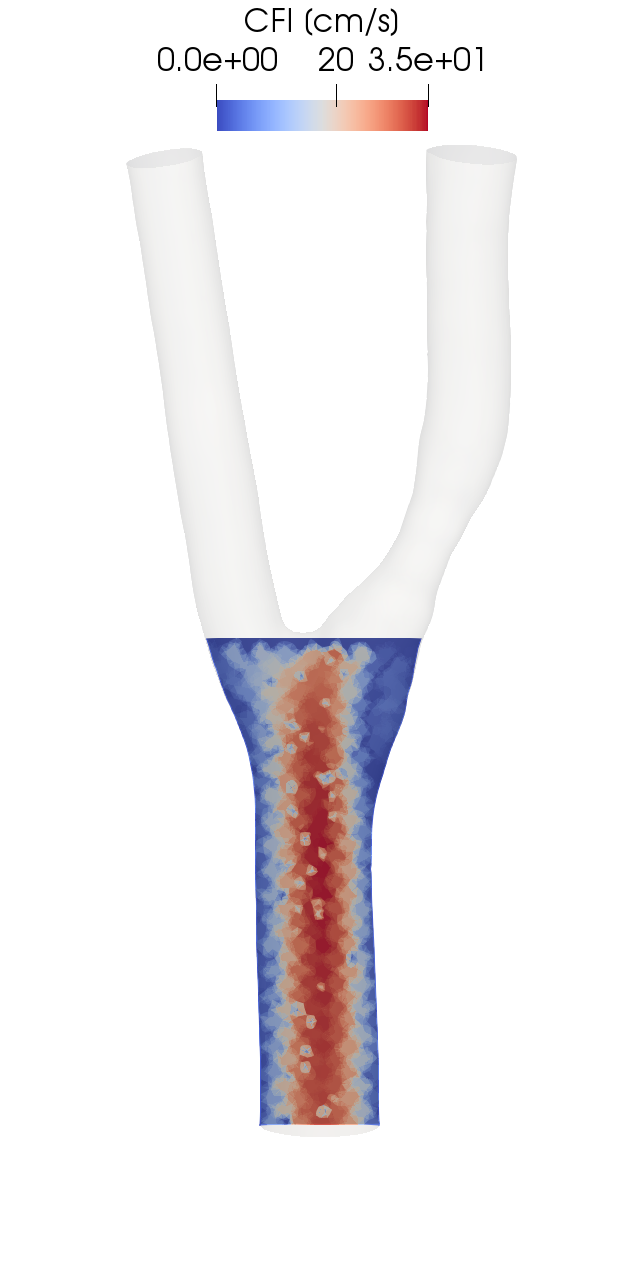}
\caption{\fg{
Synthetic CFI of the common carotid branch with 233 voxels of $0.15$ [cms] each (image from the systole period).}}
\label{fig:measures_CFI_syn}
\end{figure}

\subsection{Sampling $\cM$ with the Navier-Stokes equations}
\label{sec:sampling_M}

We consider the incompressible Navier-Stokes equations (NSE) as given in \eqref{eq:Navier_Stokes}. These equations are closed by adding a zero initial condition and the following boundary conditions:
\begin{itemize}
\item No-slip for the vessel wall, that is, $u = (0,0,0)^T$ on $\Gamma_\text{w}$.
\item The inlet boundary $\Gamma_\inlet$ lies in the $xz$ plane and we apply a Dirichlet condition $u = [0,u_{\text{in}},0]^T$. The component $u_{\text{in}}$ is a function $u_{\text{in}}(t, x, z) = u_0~ g(t) f(x, z)$, where:
\begin{itemize}
\item $u_0\in \mathbb{R}^+$ is an scaling factor. The function $g(t)$ is taken from flow curves in the common carotid arteries borrowed from \cite{blanco2014}. Its behavior is given in Figure \ref{fig:q_inflow_common_carotid}.
\item The function $f(x,z)$ is a 2D logit-normal distribution
\begin{equation}
\begin{aligned}
  f(x,z) =& \frac{\text{exp} \{ -0.5 \(\text{log} \left(\frac{x}{1-x}\right)  - s  \)^2 \}}{x(1-x)z(1-z)} \\
       -& \frac{\text{exp} \{ 0.5  \(\text{log} \left(\frac{z}{1-z}\right) \)^2 \}}{x(1-x)z(1-z)},
  \label{eq:logit_normal_inlet}
\end{aligned}
\end{equation}
where the parameter, $s\in \mathbb{R}^+$, controls the axial symmetry of the inlet flow. \\
\end{itemize}
\begin{figure}[!htbp]
  \centering
  \includegraphics[height = 5 cm]{./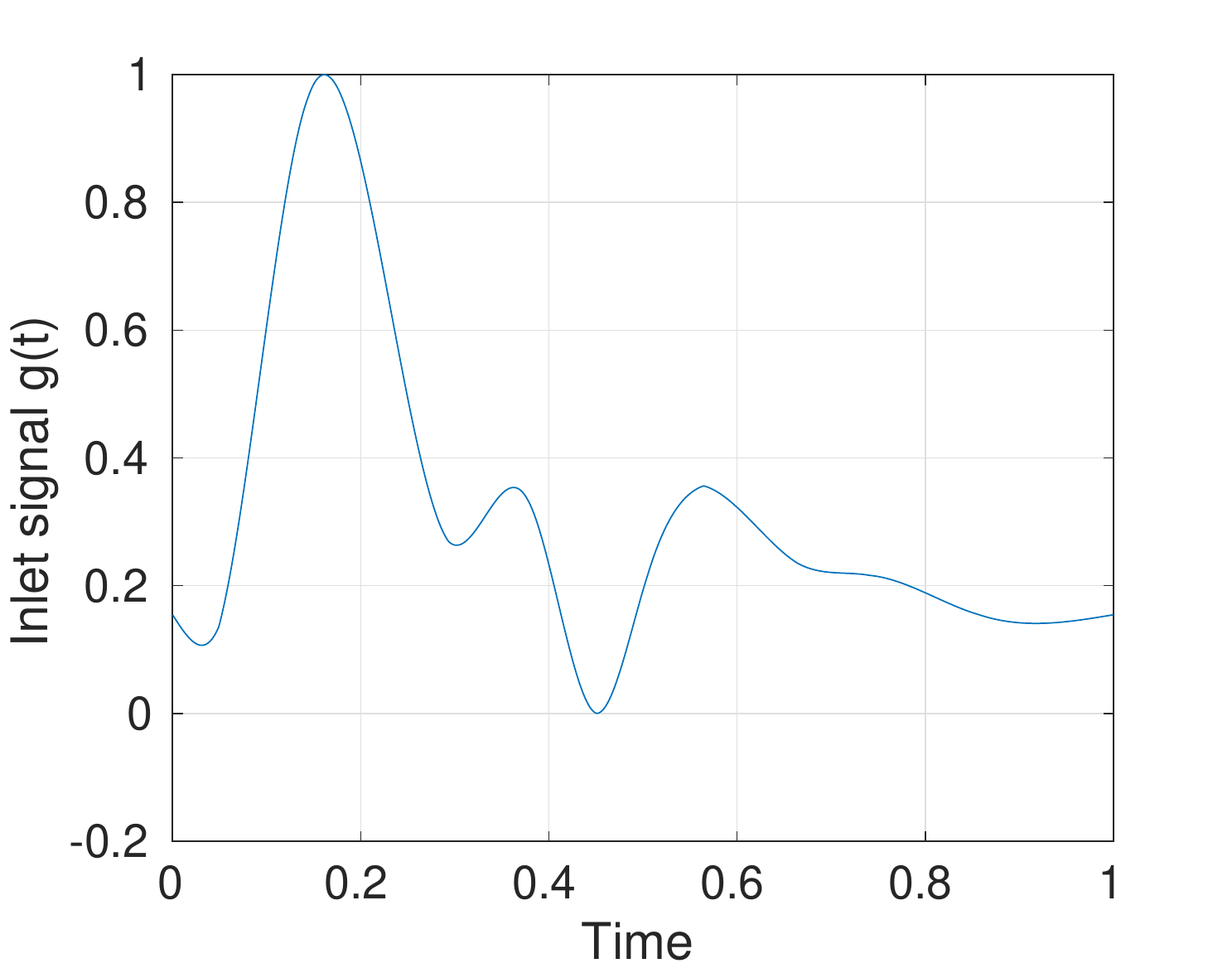}
  \caption{The function $g(t)$ for the inlet boundary condition.}
  \label{fig:q_inflow_common_carotid}
\end{figure}
\item For the outlet boundaries $\Gamma_\outlet^1$ and $\Gamma_\outlet^2$, we use a Windkessel model (see \cite{fqv2009} for a survey on 0-D models in haemodynamics), which gives the average pressure over each $\Gamma_\outlet^k$,
$$
\bar p_{o,k} = p_d^k + R_p^k \int_{\Gamma_\outlet^k} u \cdot n ~\ds ,\quad k=1,\,2
$$
where $p_d^k \in \mathbb{R}$ is called \emph{distal pressure} and is the solution to the ordinary differential equation:
\begin{equation}
\label{eq:pd}
\begin{cases}
C_d^k \dfrac{d p_d^k}{d t} + \dfrac{p_d^k}{R_d^k} &= \int_{\Gamma_\outlet^k} u \cdot n ~ \ds \\
p_d^k(t=0) &= p_{d, k} \text{ given.}
\end{cases}
\end{equation}

This model aims to represent the cardiovascular system behavior beyond the boundaries of
the working domain with a minimal increase in the computational cost. It is based on an analogy between flow and pressure with current and voltage in electricity. This is the reason why $C_d^k$ is called distant capacitance and $R_p^k$ and $R_d^k$ are respectively called proximal and distant resistances. These three parameters are positive real numbers.
\end{itemize}

\fgal{In the present work, we use a semi-implicit time discretization, and finite elements for the space discretization. This is described alongside with the ODE coupling for the distal pressures in \ref{appendixDisc}}

Now that the model has been introduced, let us define the set of solutions that we consider in our numerical experiments. We set,
\begin{equation}
\begin{aligned}
\rho & = 1 \; \text{g}/\text{cm}^3 \\
\mu &= 0.03 \; \text{Poise} \\
C_d^k &= 1.6 \times 10^{-5} \text{ for $k=1,2$} \\
R_{p}^k &= 7501.5  \text{ for $k=1,2$} \\
p_{d,k} &= 1.06 \times 10^5 \text{ for $k=1,2$} \\
R_d^k & = 60012 \\
\end{aligned}
\end{equation}
We introduce the ratio of the distal resistances for the Windkessel model at the outlets 
$$
\eta \coloneqq R_{d}^1 / R_d^{2} = 60012 / R_d^{2}.
$$
This parameter plays an important role since it impacts on how the blood flow splits between the two branches. When $\eta\to 0$ or $\infty$, one branch is obstructed and the blood tends to flow through the other branch. In the following, we call this situation an arterial blockage. When $\eta\approx 1$, the flow splits more or less equally and there is no blockage.

We define the heart rate as the number of cardiac cycles per minute, that is,
$$
\text{\HR} \coloneqq 60/T_c,
$$
where $T_c>0$ is the cardiac cycle duration expressed in seconds. We have $T_c = T_{sys}+T_{dia}$, where $T_{sys}$ and $T_{dia}$ are the duration of the systole and diastole respectively.

Our $\cM$ is generated by the variations of the following six parameters
\begin{equation}
\label{eq:params}
\begin{aligned}
t & \in [0, T] \\
\text{\HR} & \in [48, 120] \\
s & \in [0, 0.2]  \\
T_{sys} & \in [0.2863, 0.3182] \; \text{s.} \\
u_0 & \in [17, 20] \; \text{cm/s} \\
\eta & \in [0.5, 1.5] 
\end{aligned}
\end{equation}
We emphasize that time is seen as a parameter, \fg{and the simulation time $T$ for each solution depends on the heart rate, that is to say: $T = 60 / \text{\HR}$  . The parameter set is thus}
$$
\rY =  \{  (t, \HR, s, T_{sys}, u_0, \eta) \in \bR^6 \,:\,   t\in [0,T]\,   \text{\HR} \in [48, 120],\,   s\in  [0, 0.2],\, \dots   \} \subset \bR^6
$$
and the set of solutions is
$$
\cM \coloneqq \{  u(y) \in [H^1(\Omega)]^3\, :\, y \in \rY \}.
$$
The computation of reduced models involves a discrete training subset $\widetilde{\cM} \subset \cM$ which, in the experiments below, involves $\#\widetilde{\cM} = 21513 $ snapshots $u(y)$. The parameters are chosen from a uniform random distribution and we only save the solutions during the second cardiac cycle of each simulation. \dl{We considered a zero initial condition to start the simulation. Hence, the first cardiac cycle is less representative of the flow nature compared to the subsequent cycles.}

\subsection{Optimal partitioning and optimal dimension of $V_n$}
\label{subsec:optimal_n}

During ultrasound examination, we have access to the patient's heart rate $\HR$ and the time $t$ of the cardiac cycle. We can therefore decompose the vector $y$ of parameters as
$$
y = (y^\obs, y^\unobs), \quad y^\obs = (t, \HR), \quad y^\unobs=(s, T_{sys}, u_0, \eta).
$$
and use the piecewise reconstruction algorithm introduced in section \ref{sec:piecewise}. For this, we need to find an appropriate partition of $Y^\obs$ which will yield a partition of the whole parameter domain and a manifold decomposition as in equations \eqref{eq:partition-Y} and \eqref{eq:partition-M}.

The strategy that we have followed consists in computing first a training subset $\widetilde\cM$ of snapshots. We next consider a splitting of the time interval into $K\in \bN^*$ uniform subintervals
%$$
%[0, T] = \cup_{k=0}^{K-1} \tau_k, \quad \text{with }  \tau_k=[ k T/K, (k+1)T/K [
% $$
\fgal{
$$
[0, T] = \( \cup_{k=0}^{K-2} \tau_k \) \cup [(K-1) T/K, T], \quad \text{with }  \tau_k=[ k T/K, (k+1)T/K [
$$}
We proceed similarly for the heart rate's interval and split it into $K'\in \bN^*$ uniform subintervals,
%$$
%[48, 120] = \cup_{k'=0}^{K'-1} h_{k'}, \quad \text{with }  h_{k'}=[ 48 + 72k'/K', 48 + 72(k'+1)/K' [.
%$$
\fgal{
$$
[48, 120] = \(\cup_{k'=0}^{K'-2} h_{k'}\) \cup [ 48 + 72(K'-1)/K', 120], \quad \text{with }  h_{k'}=[ 48 + 72k'/K', 48 + 72(k'+1)/K' [.
$$}
For fixed $(K,K')$, we have the partition in the parameter domain (see Figure \ref{fig:manifold_partitioning})
$$
Y^\obs = \bigcup_{(k,k') \in \{0,\dots, K-1\}\times \{0,\dots, K'-1\} } \tau_k \times h_{k'},
\qquad
Y = \bigcup_{(k,k') \in \{0,\dots, K-1\}\times \{0,\dots, K'-1\} } \tau_k \times h_{k'} \times Y^\unobs
$$
and the induced partition in the manifold
$$
\cM = \bigcup_{(k,k') \in \{0,\dots, K-1\}\times \{0,\dots, K'-1\} } \cM^{(k,k')}
$$
\begin{figure}[!htbp]
\centering
\includegraphics[height=5cm]{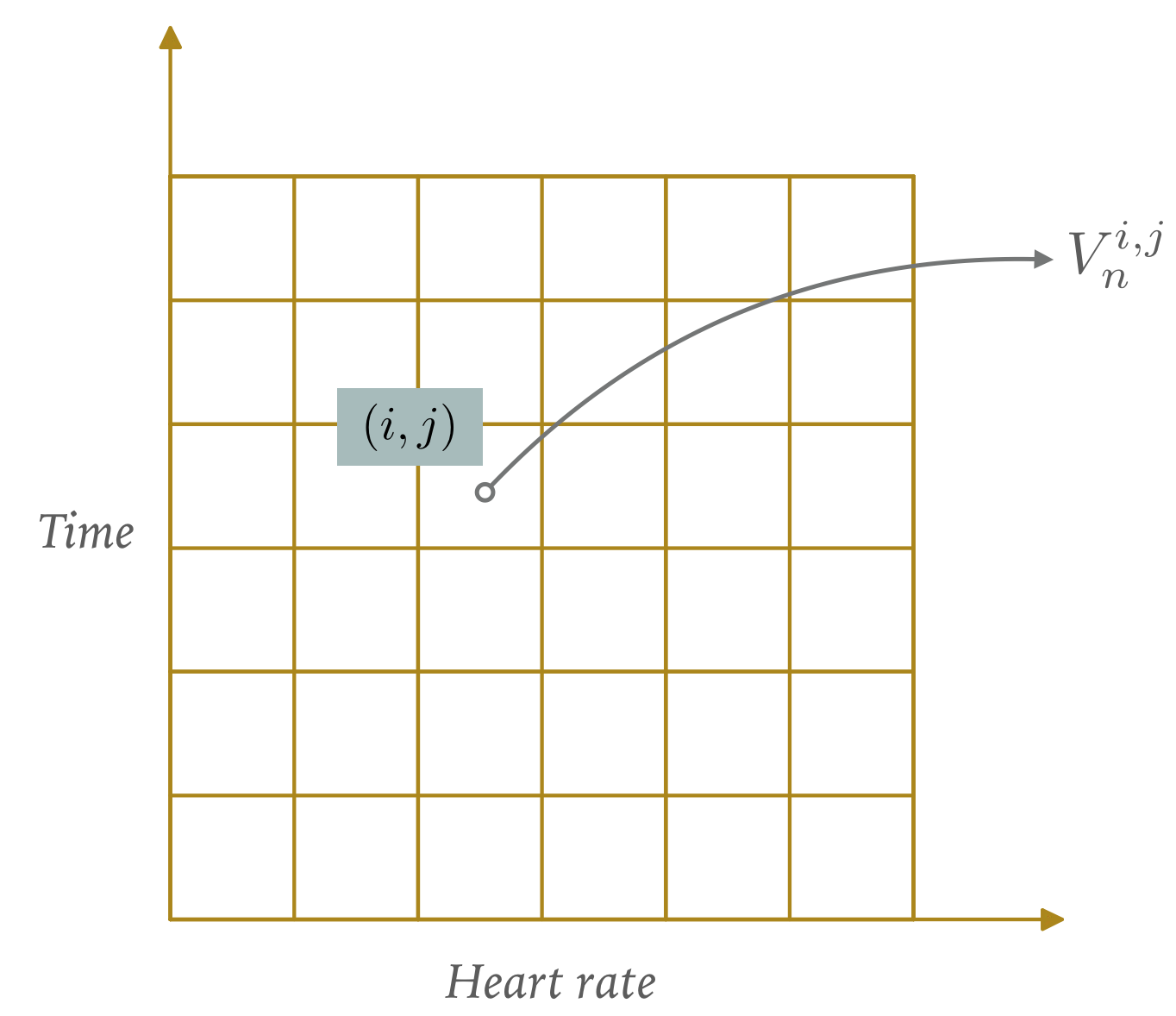}
\caption{Manifold splitting and reduced models $V_n^{(k,k')}$ on each partition.}
\label{fig:manifold_partitioning}
\end{figure}
 For each $\cM^{(k,k')}$, we can compute reduced models $V_n^{(k,k')}$. If we measure the reconstruction error in the worst case sense, we can estimate the reconstruction performance with this splitting by computing
\begin{equation}
e\(K,K'\) = \max_{(k,k') \in \{0,\dots, K-1\}\times \{0,\dots, K'-1\}} \min_{1\leq n \leq m} \max_{u\in \widetilde \cM_{(k,k')}} \frac{ \dist(u,V_n^{(k,k')})  }{\beta(V_n^{(k,k')}, W_m)}
\label{eq:size_criteria}
\end{equation}
 We then look for the optimal partition when $K$ and $K'$ range between $1$ and $7$, that is, we select
 $$
 (K_\opt, K_\opt') \in \argmin_{ (K, K') \in \{1,\dots,7\}\times \{1,\dots, 7\} } e\(K,K'\).
 $$
 
 When we consider only the velocity $u$ as a target quantity, we obtain a $5 \times 5$ partitioning of $Y^\obs$. 
% \om{[It would be good to add the values of $e\(K,K'\)$.]}
 In Figure \ref{fig:size_opt_decay}, we show the behavior with $n$ of the stability constant $\beta(V_n^{(k,k')}, \Wm)$ and the error $\max_{u \in \widetilde \cM^{(k,k')}} \dist(u, V_n^{(k,k')})$ for each element of this optimal partition.
 
 We proceed similarly to derive the optimal partition for the couple velocity-pressure $(u,p)$ in $V=U\times P$. We also obtain a $5\times 5$ partition and Figure \ref{fig:size_opt_decay_up} shows the behavior of $\beta(V_n^{(k,k')}, \Wm)$ and the error $\max_{(u,p) \in \widetilde \cM^{(k,k')}} \dist((u, p), V_n^{(k,k')})$ for this case. Note that the value of the stability constant is very low, and this is due to the fact that our measurement space allows only to sense in the velocity.
% \om{[Prove that $\beta_{(u,v)}>0$ if $\beta_u >0$?]}
 
 Once the optimal partition has been found, for each subset $\cM^{(k,k')}$, we select the optimal dimension $n^*$ as
\begin{equation}
n_{(k,k')}^* \in \argmin_{n=1,\ldots, m}{ \frac{ \dist(u,V_n^{(k,k')})  }{\beta(V_n^{(k,k')}, W_m)}}.
\label{eq:n_criteria}
\end{equation}
This procedure for the selection of $n_{(k,k')}^*$ \om{gives the best trade-off between accuracy and stability.} It is referred to as \textsl{the multi-space} approach in \cite{binev2017}.

\begin{figure}[!htbp]
\centering
\subfigure[Stability constant $\beta \left( V_n^{i,j}, W_m \right)$ for each manifold partition]{\includegraphics[height=5.5cm]{./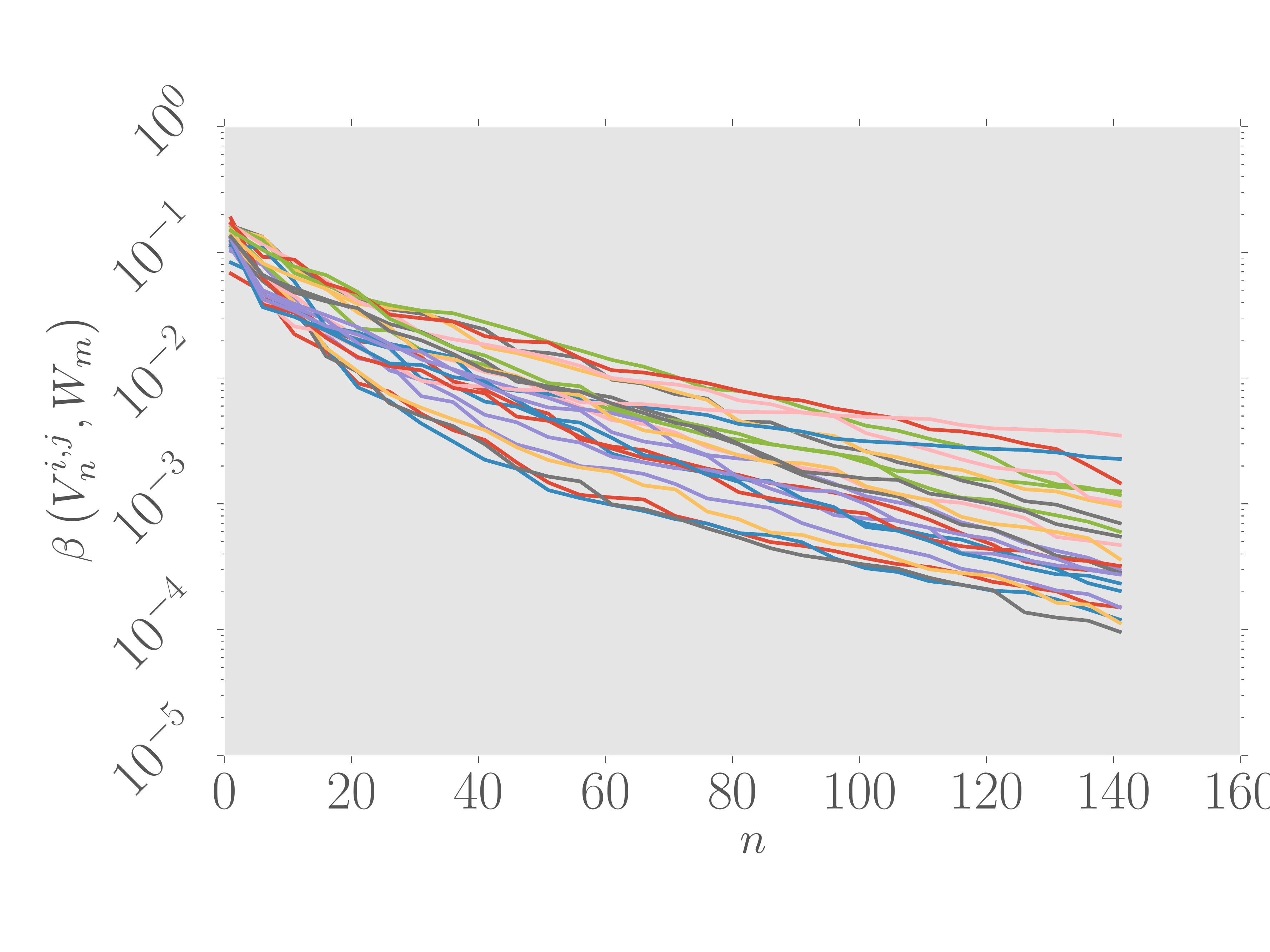}}
\subfigure[Model error $\dist \left( u, V_n^{i,j}   \right)$ for each manifold partition]{\includegraphics[height=5.5cm]{./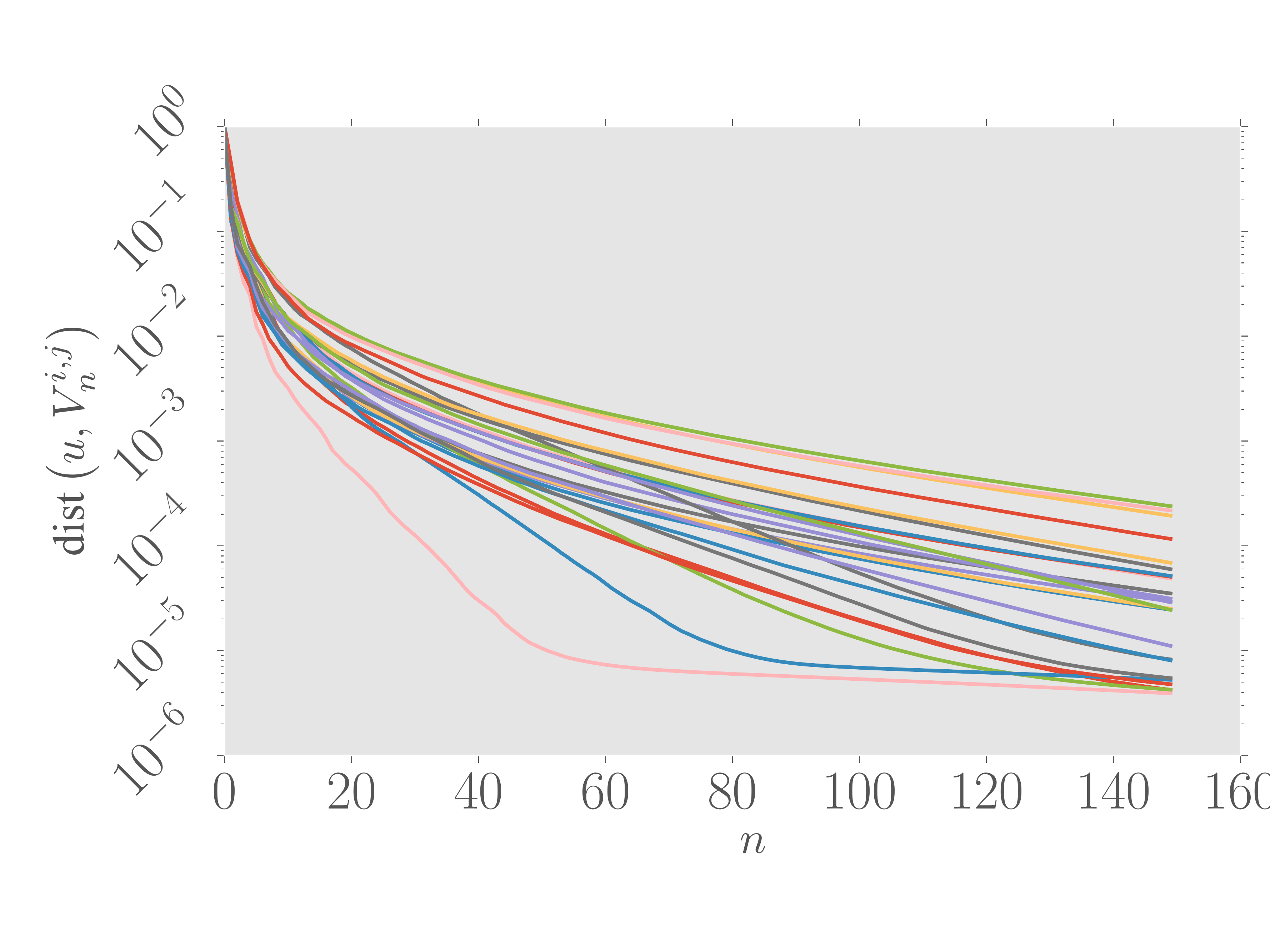}}
\caption{Behavior of stability constant and model error respect to the dimension of $V_n$ for reconstruction of velocity only. Optimal partitioning is ($K=K'=5$).}
\label{fig:size_opt_decay}
\end{figure}

\begin{figure}[!htbp]
\centering
\subfigure[Stability constant $\beta \left( V_n^{i,j}, W_m \right)$ for each manifold partition]{\includegraphics[height=5.5cm]{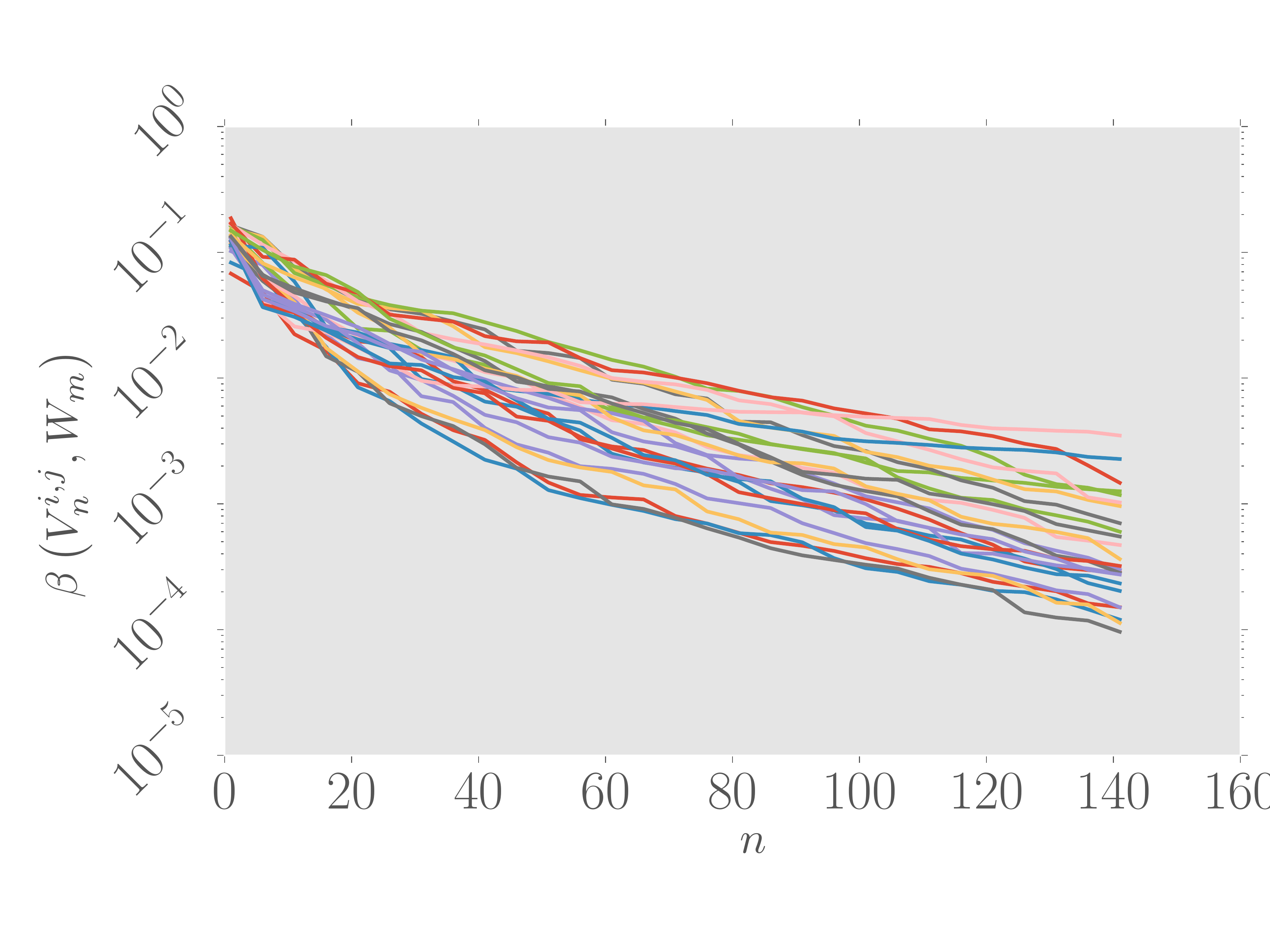}}
\subfigure[Model error $\dist \left( \(u,p\), V_n^{i,j} \right)$ for each manifold partition]{\includegraphics[height=5.5cm]{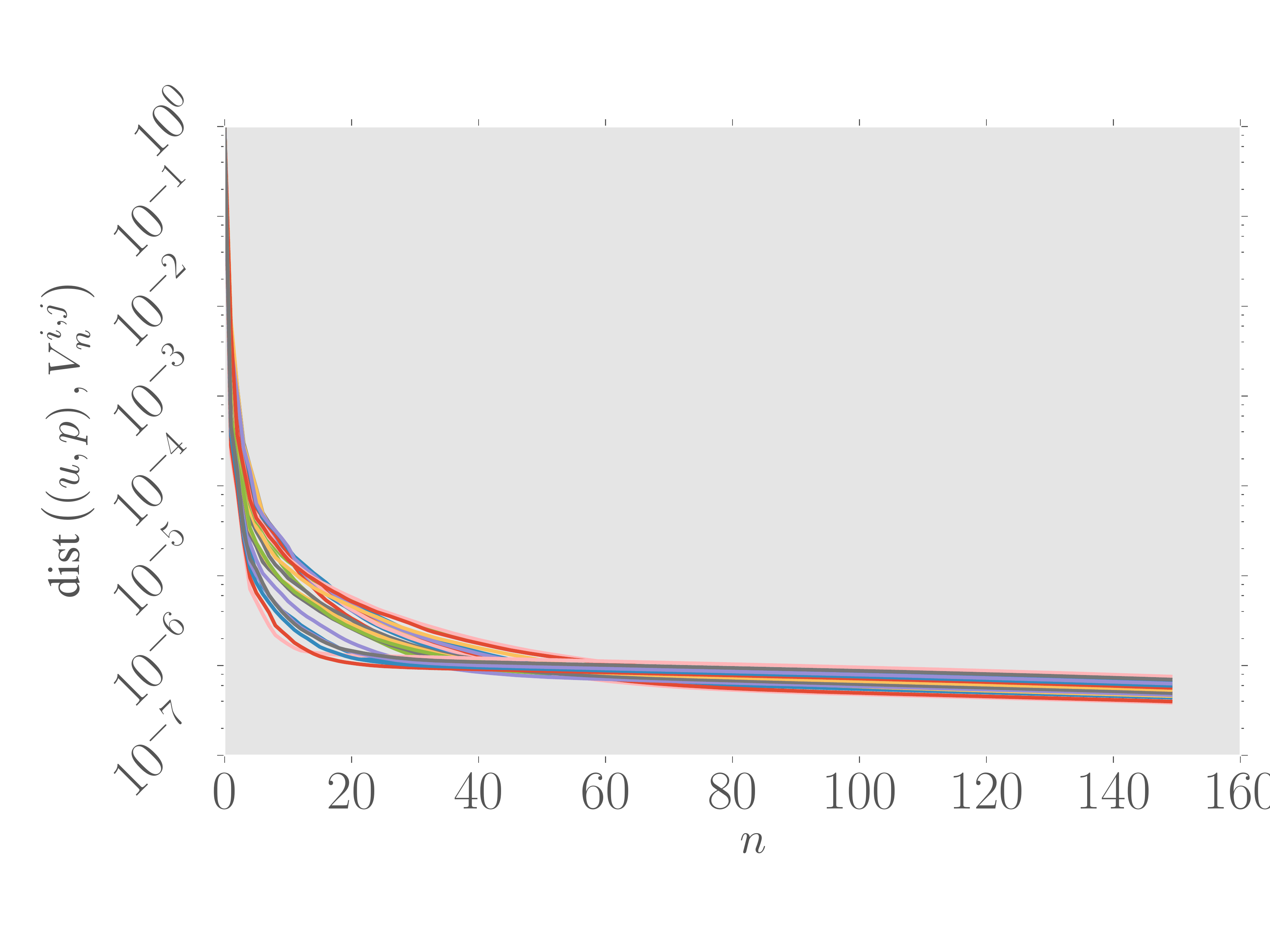}}
\caption{Behavior of stability constant and model error respect to the dimension of $V_n$ for joint reconstruction of section \ref{sec:joint_up}. Optimal partitioning is ($K=K'=5$).}
\label{fig:size_opt_decay_up}
\end{figure}

\subsection{Reconstruction results for velocity field and related quantities}
In the rest of the paper, we use the piecewise linear approach with the optimal splitting and the optimal dimension $n^*_{(k,k')}$ for the reduced models $V_n^{(k,k')}$. To simplify notation, we will write $V_n$ instead of $V_n^{(k,k')}$ when no confusion arises and $(u^*, p^*)$ instead of $(u^*_{n^*_{(k,k')}}, p^*_{n^*_{(k,k')}})$.  In addition, depending on the context, $V_n$ denotes either the reduced model for the velocity reconstruction or the reduced model for the joint reconstruction described in section \ref{sec:joint_up}.

Figure \ref{fig:error_u} shows the relative error in time in the velocity reconstruction
\begin{equation}
e(u(t))^2 = \frac{\Vert u(t) - u^*(t)\Vert_U^2}{\int \Vert u(t)\Vert_U^2 dt}.
\label{eq:err_u_time}
\end{equation}
in norm $U = [H^1(\Omega)]^3$. We observe that there is no field over 10\% error. One can further examine the error by studying separately the $L^2(\Omega)$ reconstruction error  of the velocity and its gradient, as shown in Figure \ref{fig:error_L2_grad}. The reconstruction plots show that there are small error peaks around the region where one time window ends and the next one begins. Strategies of \textsl{window overlapping} will be explored in future works in order to mitigate this behavior. \om{For the current cardiovascular application on carotids, we refer to \cite{GYK2009} for a previous work on forward reduced modelling involving window overlapping for the purpose of analysing turbulence in stenoses carotid arteries using POD. Another approach which makes the reduced space depend on time was explored in \cite{galarce2020}. It consists in building a data-driven reduced model based on the observation $\omega$. Since $\omega$ evolves in time, the resulting reduced space is automatically updated.}

\begin{figure}[!htbp]
  \centering
  \subfigure[$e\(u\(t\)\)$ for each \dl{time dependent solution in one cardiac cycle}.]{
    \includegraphics[height=5cm]{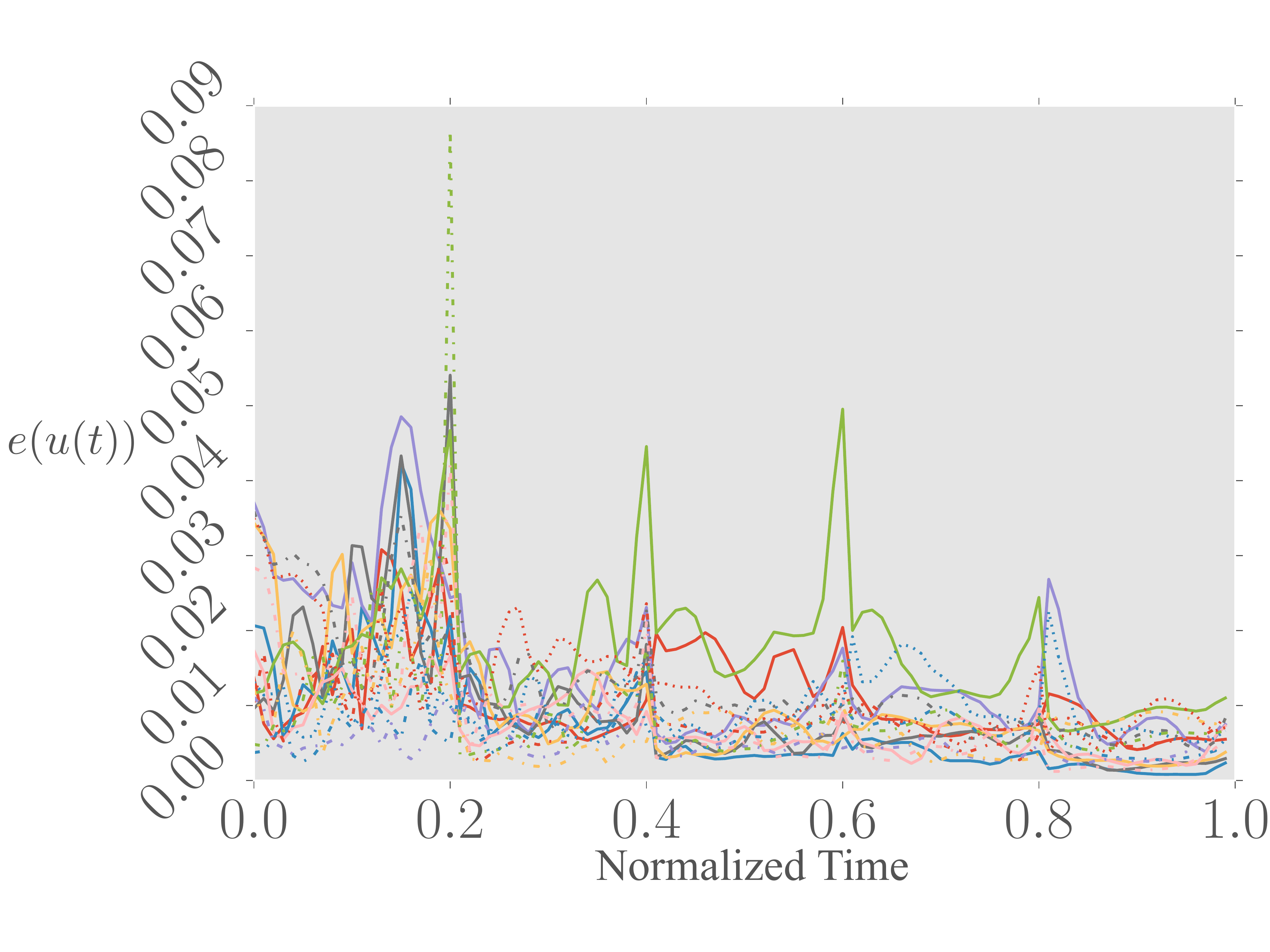}
  }
  \subfigure[$e\(u\(t\)\)$ averaged and max.]{
    \includegraphics[height=5cm]{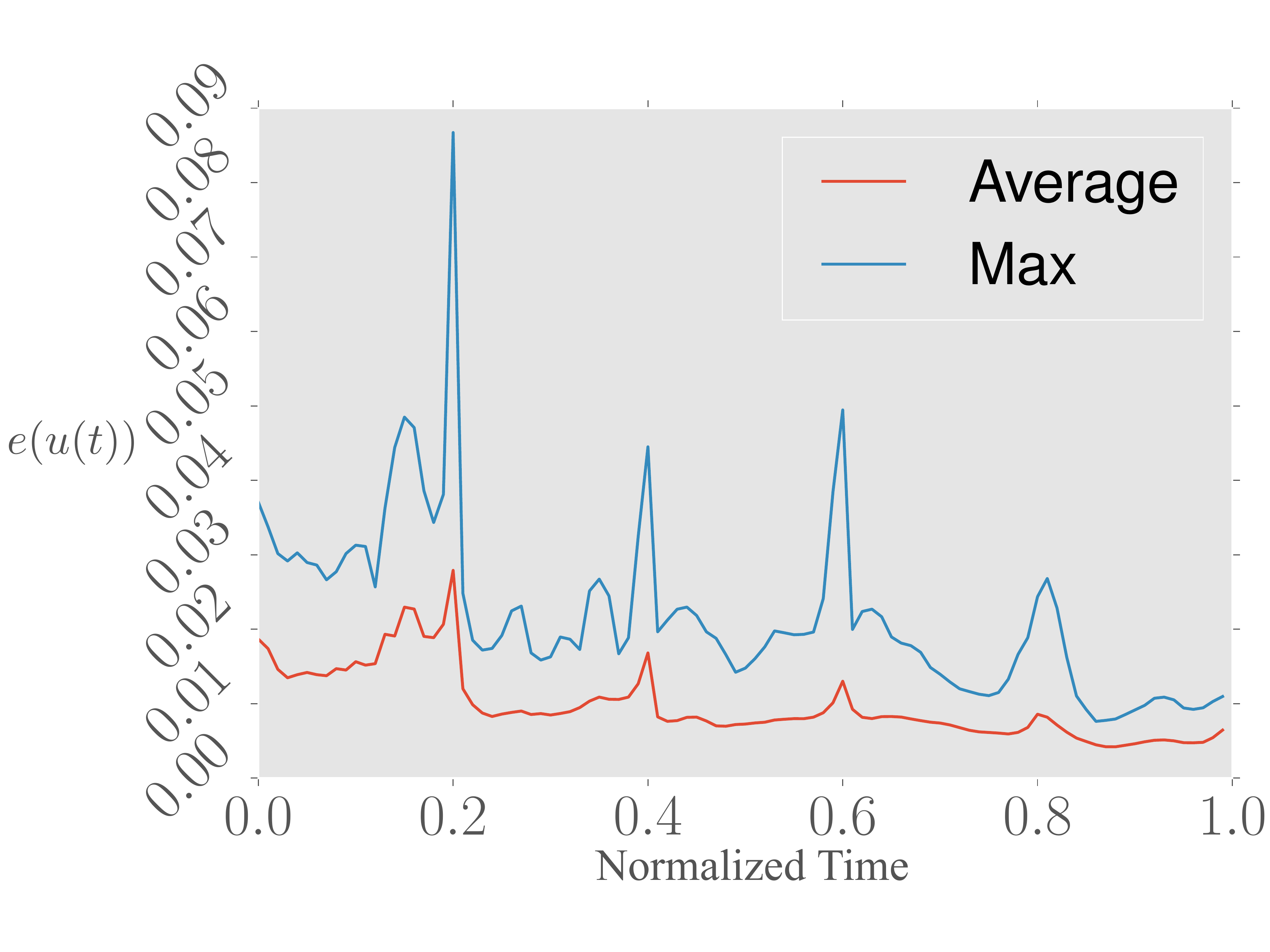}
  }

  \caption{Reconstruction error in $U = [H^1(\Omega)]^3$ of the velocity field for 16 \dl{different solutions}. Notice the small jumps at each time window interface. The vertical axis shows the error as expressed in \eqref{eq:err_u_time}. The horizontal axis shows the normalized time for one cardiac cycle.}
  \label{fig:error_u}
\end{figure}

\begin{figure}[!htbp]
\centering
\subfigure[$e((u(t))$ in norm $L^2$.]{\includegraphics[height=5cm]{./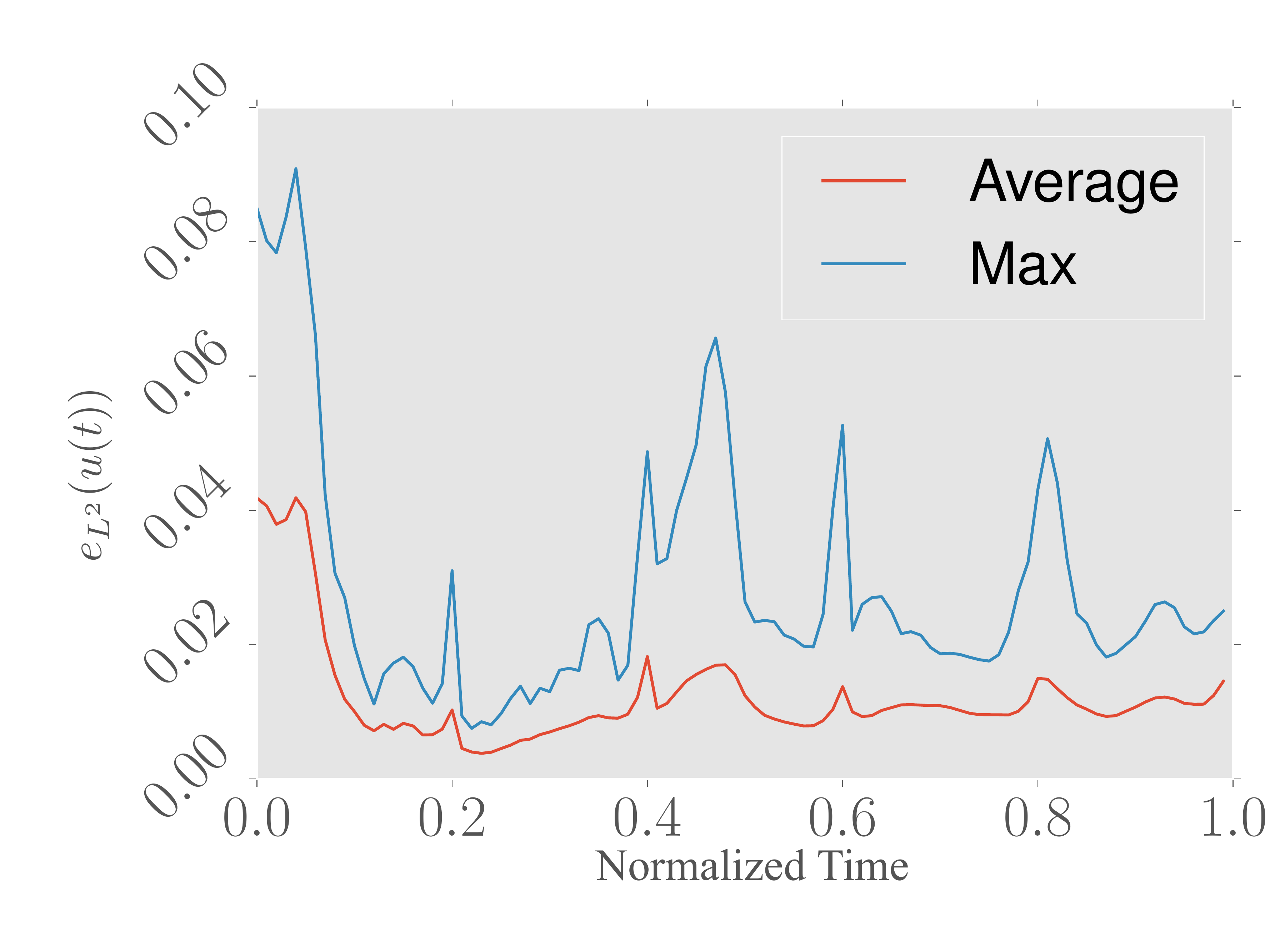}}
\subfigure[$e((\nabla u (t)))$ in norm $L^2$.]{\includegraphics[height=5cm]{./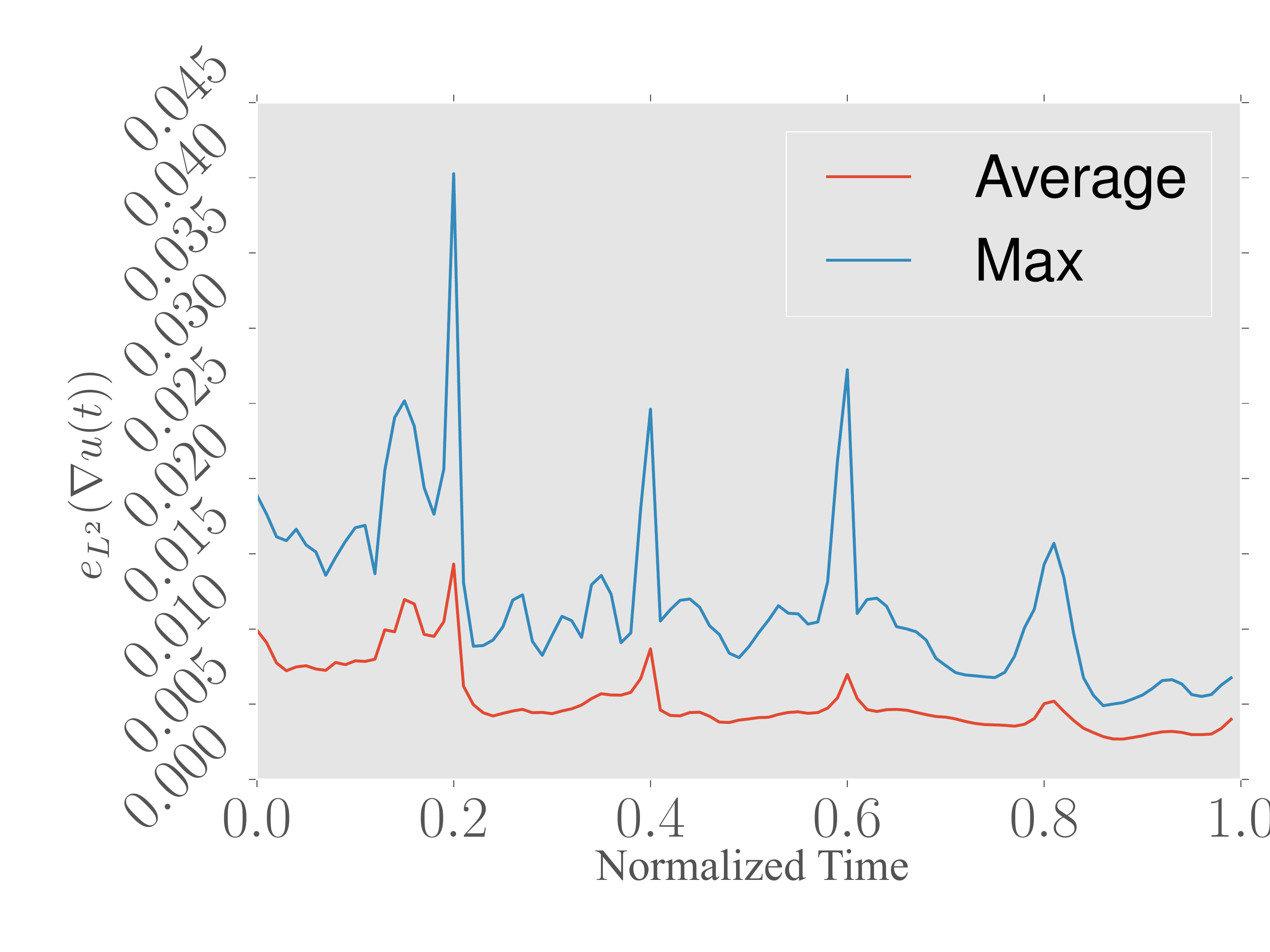}}
\caption{$L^2$ error in velocity reconstruction. We observe that the accuracy for both $u^*$ and $\nabla u^*$ stays in the same orders of magnitude. The vertical axis shows the error as expressed in \eqref{eq:err_u_time}. The horizontal axis shows the normalized time for one cardiac cycle.}
\label{fig:error_L2_grad}
\end{figure}

Figure \ref{fig:u_model_error} shows the approximation error due to the projection in $V_n$,
$$
e_{V_n}(u(t))^2 = \frac{\norm{ u(t) - P_{V_n} u(t)}^2}{\int \norm{u(t)}^2 dt}.
$$
We see that this error does not interfere with the reconstruction quality in the sense that it stays much lower than \eqref{eq:err_u_time}.

An example of the velocity reconstruction during the early systole period is shown in Figure \ref{fig:u}, where we observe that the larger errors occur in the stenosis zone. This behavior is observed during the whole cardiac cycle.

Concerning the WSS and vorticity, we can see the time evolution of the errors \eqref{eq:error_wss} and \eqref{eq:err_vorticity_time} in Figure \ref{fig:vort_wss_rec_error}. As an illustration, Figure \ref{fig:vort_rec_example} and Figure \ref{fig:wss_rec_example} shows a three-dimensional reconstructed vorticity and wall shear stress fields, respectively. The error in the stenosis area tends to be \textsl{propagated} from the velocity reconstruction, as can be expected.

\begin{figure}[!htbp]
\centering
  \includegraphics[height=5.5cm]{./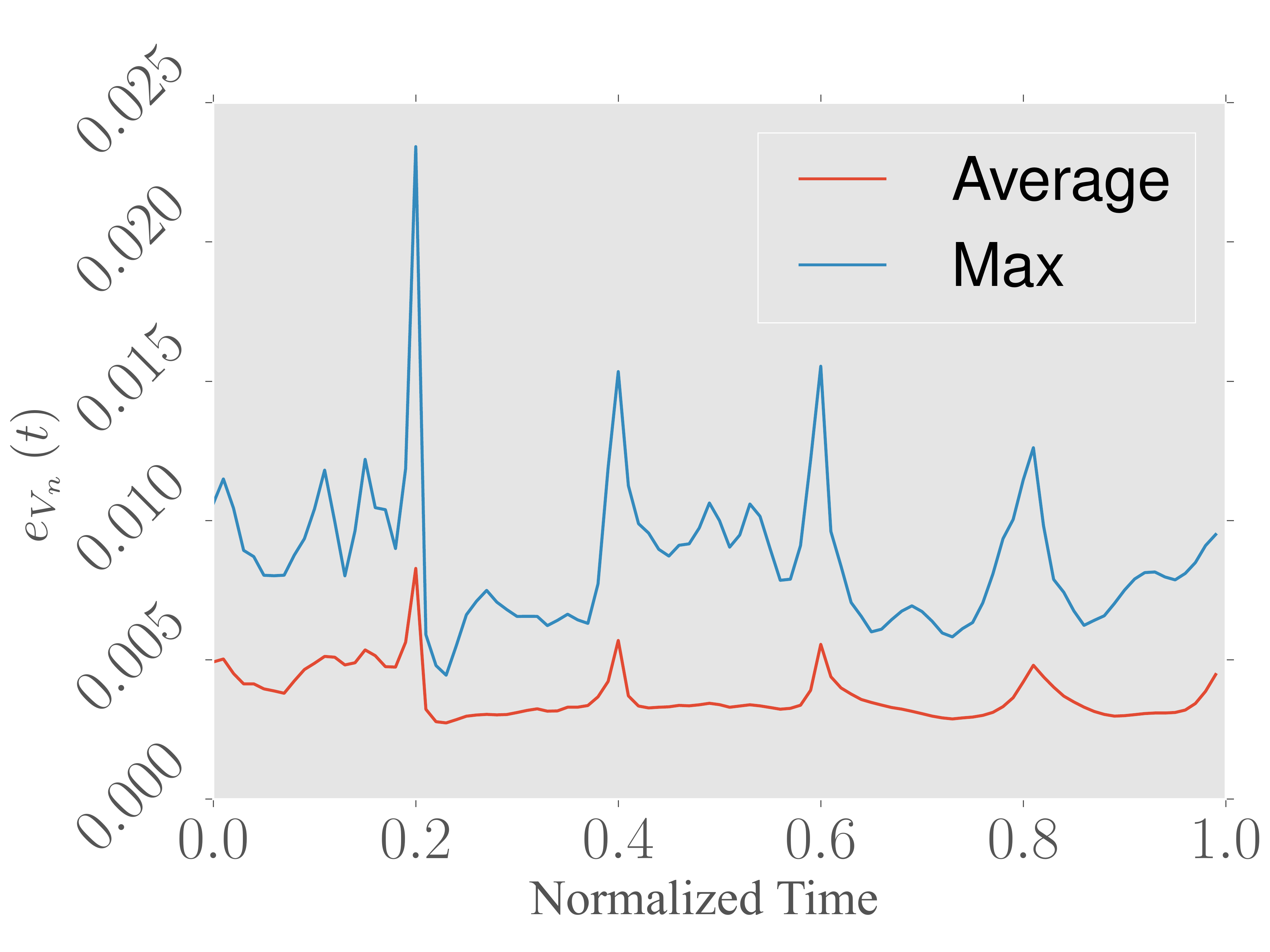}
  \caption{Average and worst case scenario model error $e_{V_n}(u(t))$.}
  \label{fig:u_model_error}
\end{figure}

\begin{figure}[!htbp]
\centering
\subfigure[$u$.]{
\includegraphics[height=8cm]{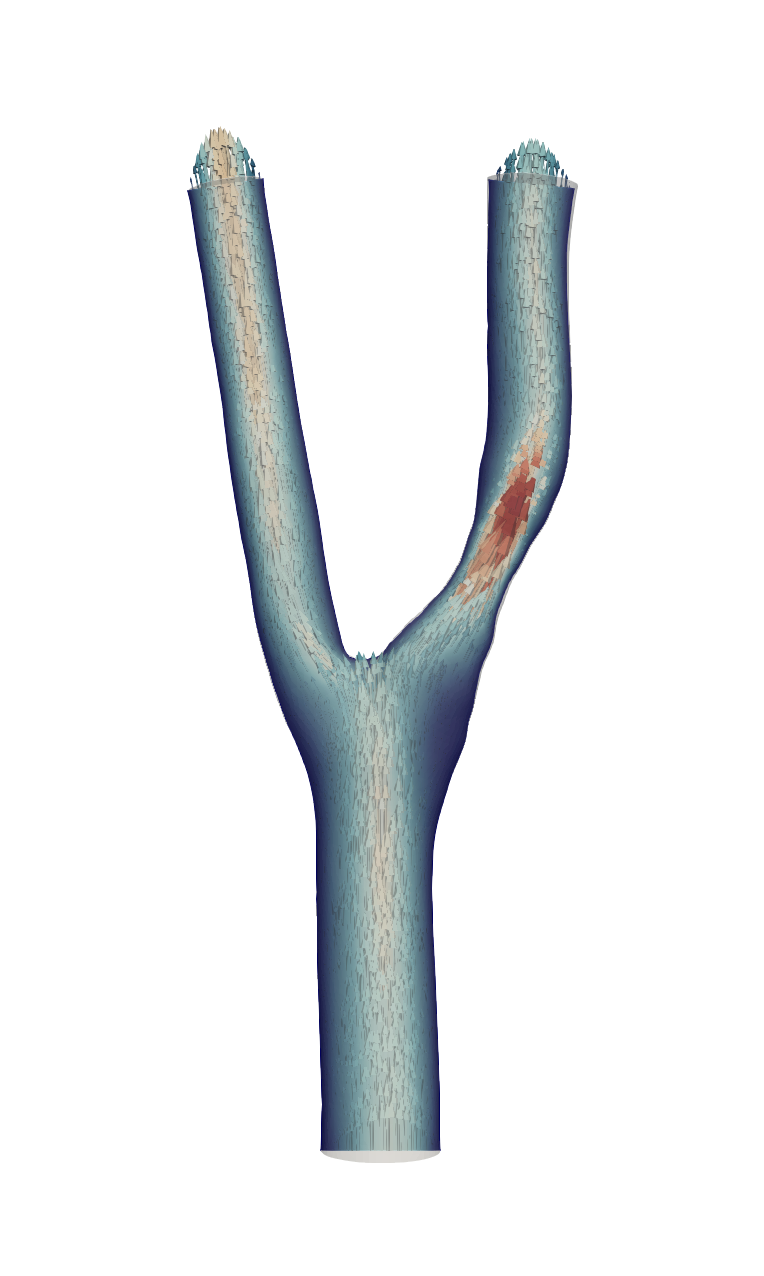}}
\subfigure[$u^*$.]{
\includegraphics[height=8cm]{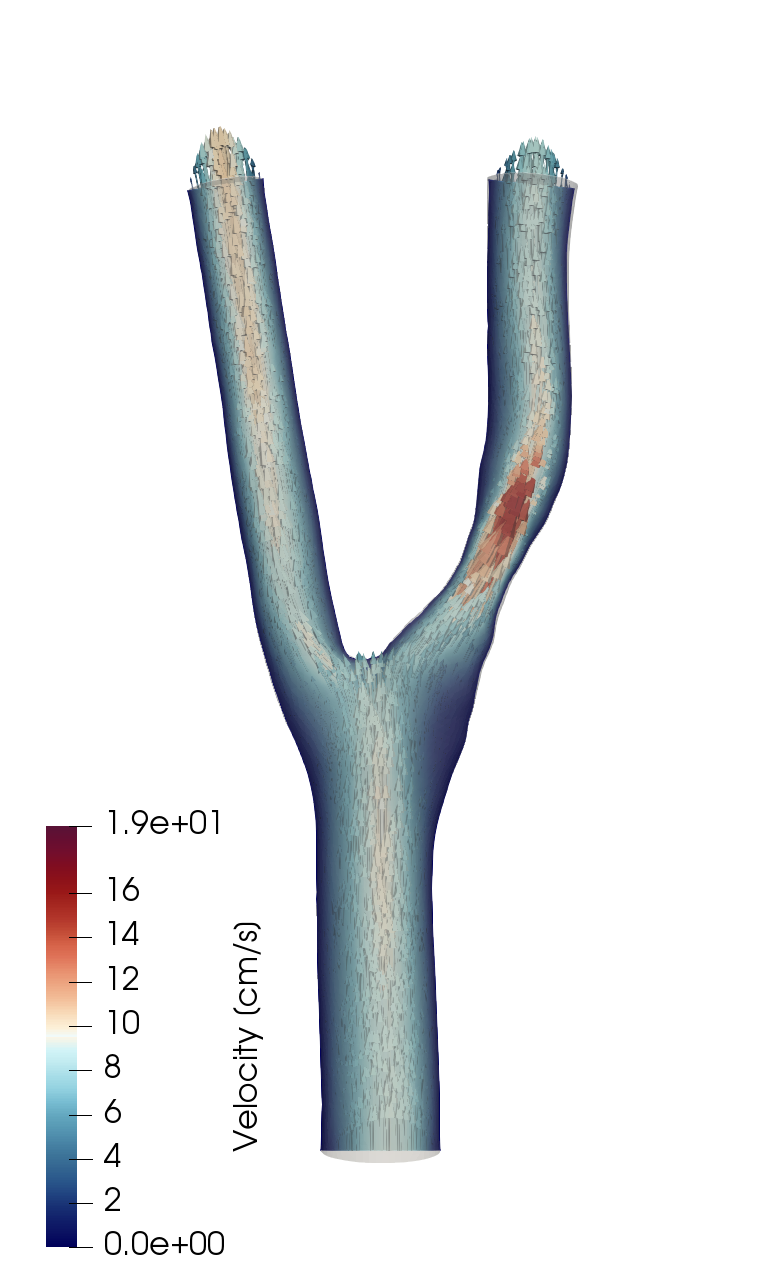}}
\subfigure[$u - u^*.$]{
\includegraphics[height=8cm]{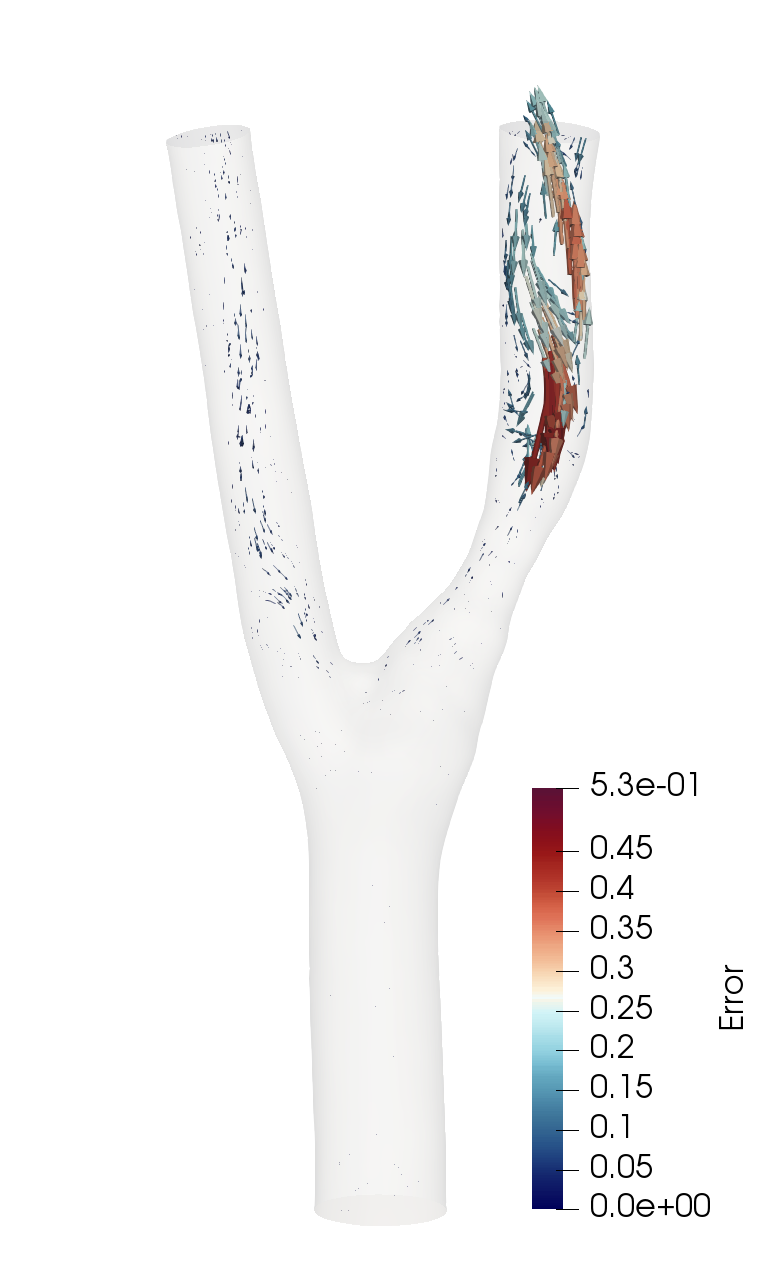}}
\caption{Example of reconstruction of the velocity during the early systole period. We observe a zone of high error close to the stenosis.}
\label{fig:u}
\end{figure}

%\begin{figure}[H]
%\centering
%\includegraphics[height=4.5cm]{./fig/u_star}
%\caption{Reconstruction example during early sistole phase.}
%\label{fig:u_star}
%\end{figure}

\begin{figure}[!htbp]
\centering
\subfigure[WSS error.]{\includegraphics[height=5cm]{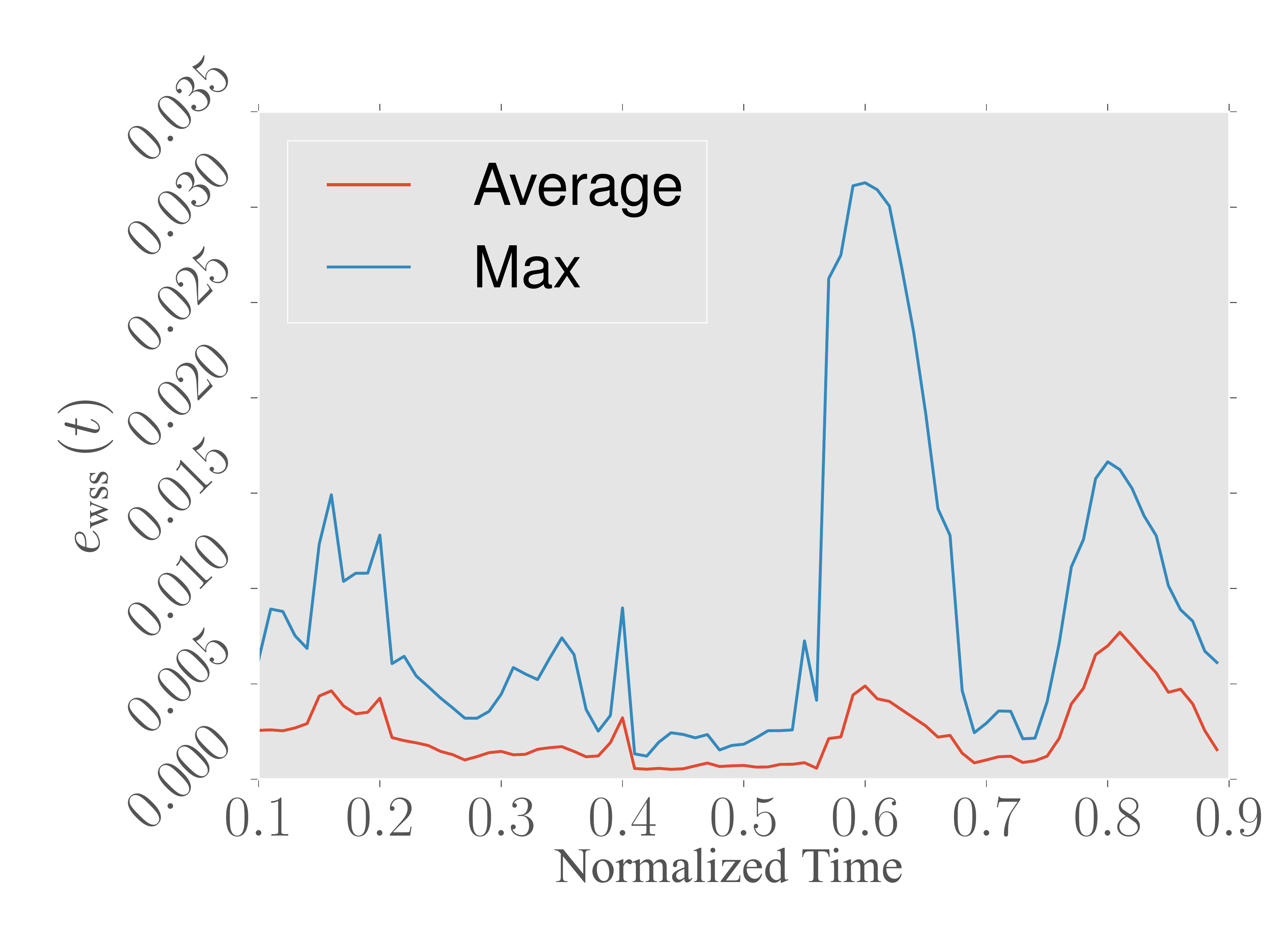}}
\subfigure[Vorticity error.]{\includegraphics[height=5cm]{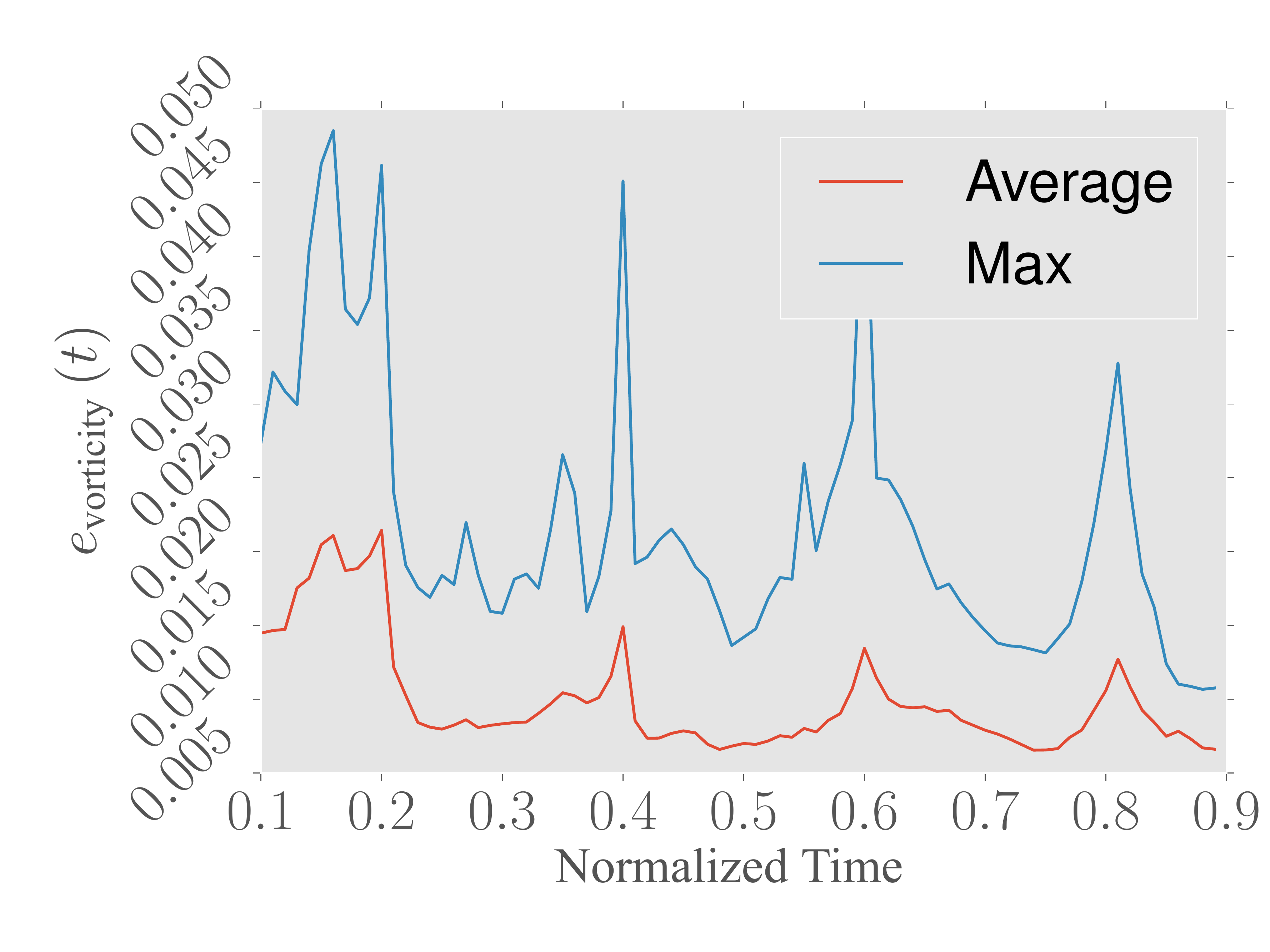}}
\caption{Relative errors \eqref{eq:err_vorticity_time} and \eqref{eq:error_wss} in time (horizontal axis in seconds).}
\label{fig:vort_wss_rec_error}
\end{figure}

\begin{figure}[!htbp]
\centering
\subfigure[$\vorticity$.]{
\includegraphics[height=8cm]{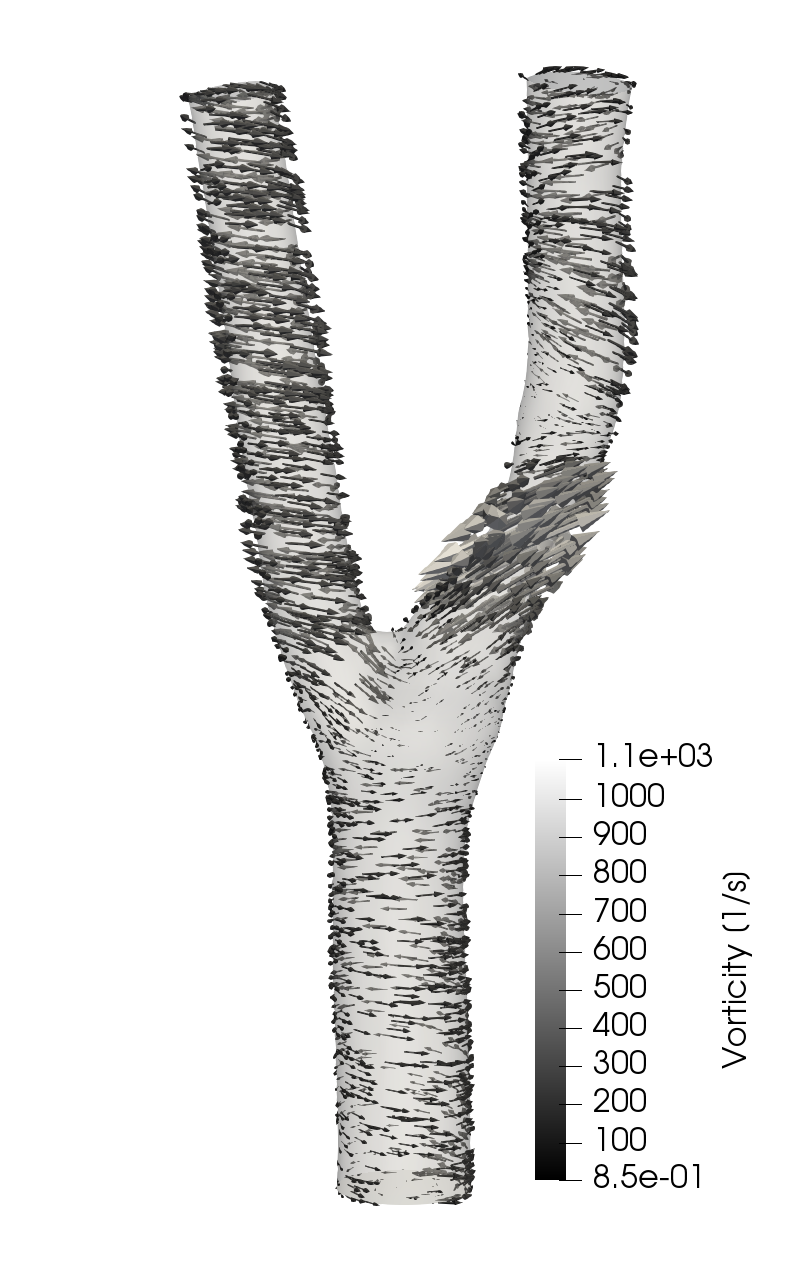}}
\subfigure[$\vorticity^*$.]{
\includegraphics[height=8cm]{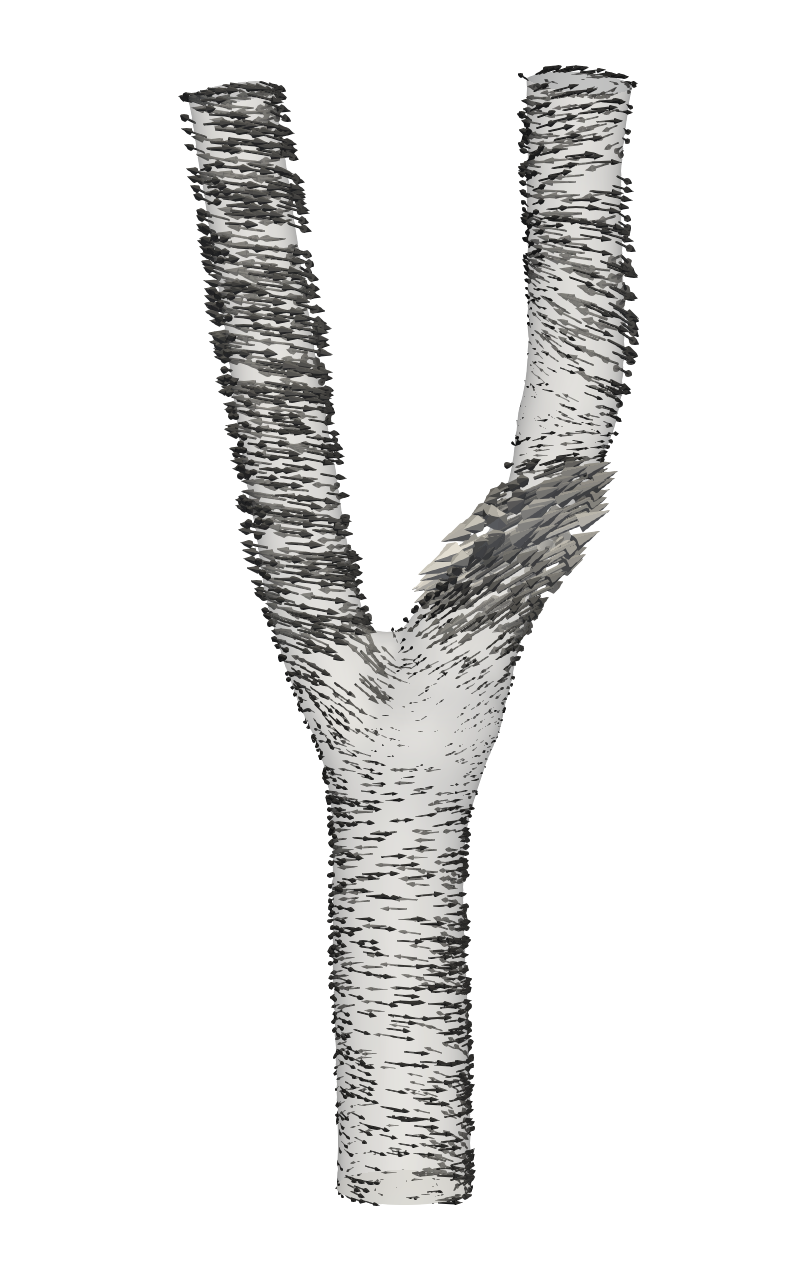}}
\subfigure[$\vorticity - \vorticity^*$.]{
\includegraphics[height=8cm]{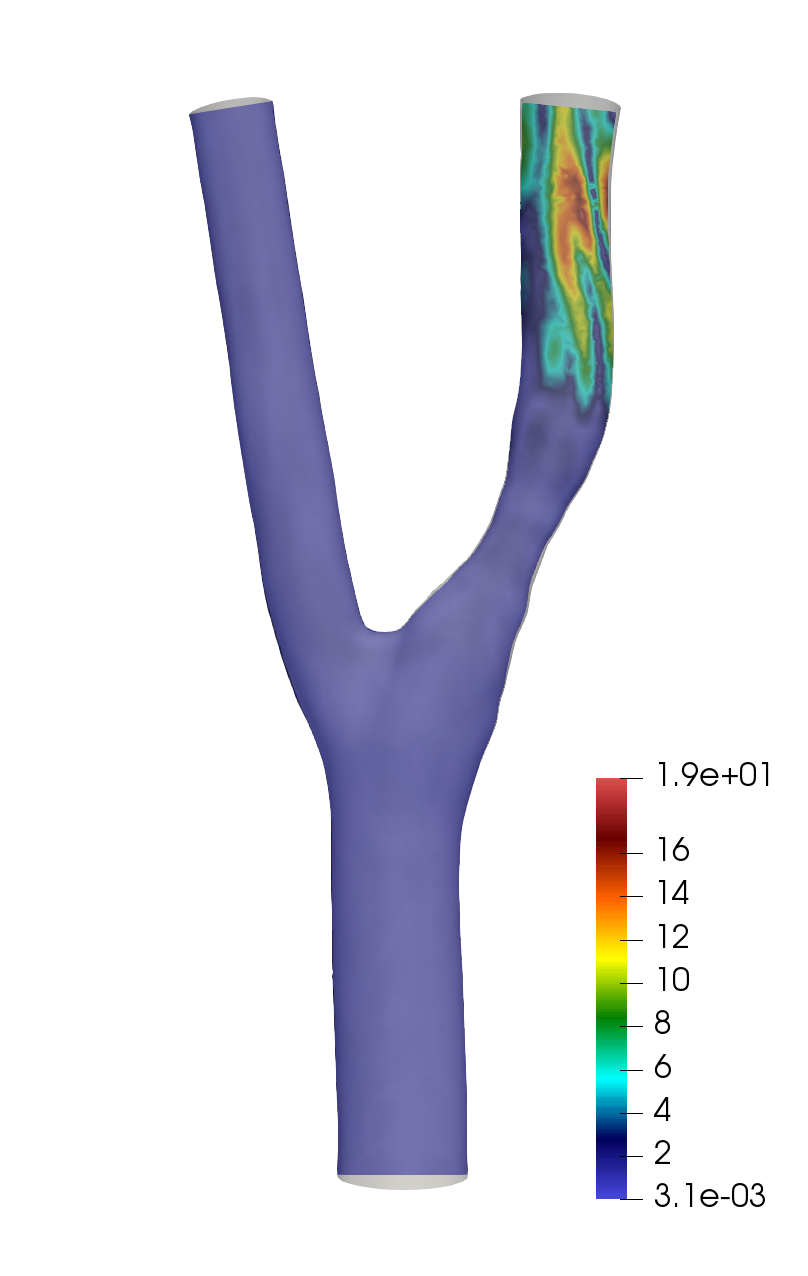}}
\caption{Example of reconstruction of the vorticity during the early systole period. We observe how the error from the velocity reconstruction is reproduced close to the stenosis.}
\label{fig:vort_rec_example}
\end{figure}

\begin{figure}[!htbp]
\centering
\subfigure[$\wss$.]{
\includegraphics[height=8cm]{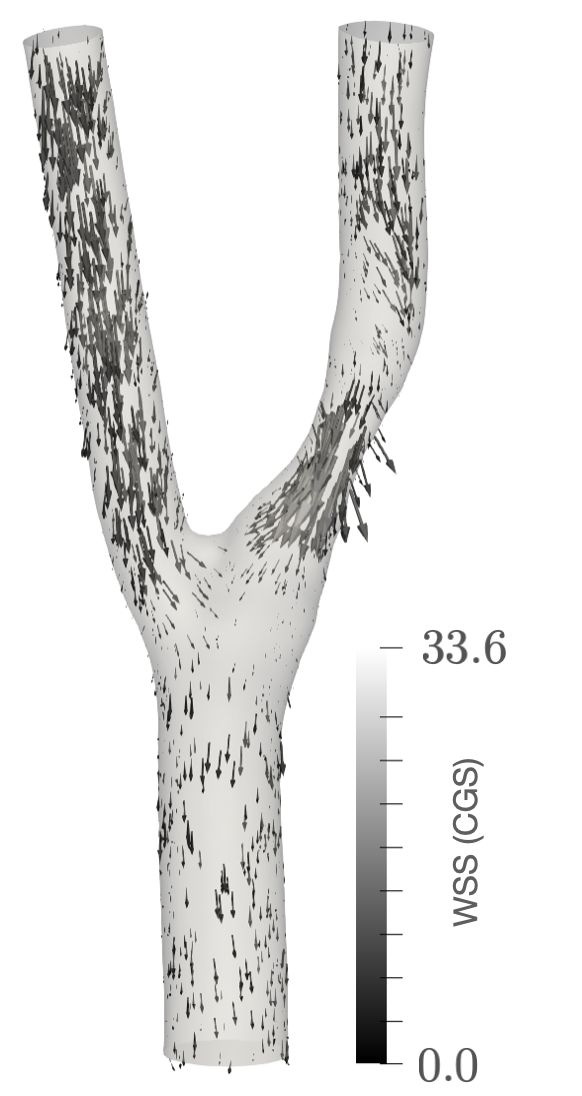}}
\subfigure[$\wss^*$.]{
\includegraphics[height=8cm]{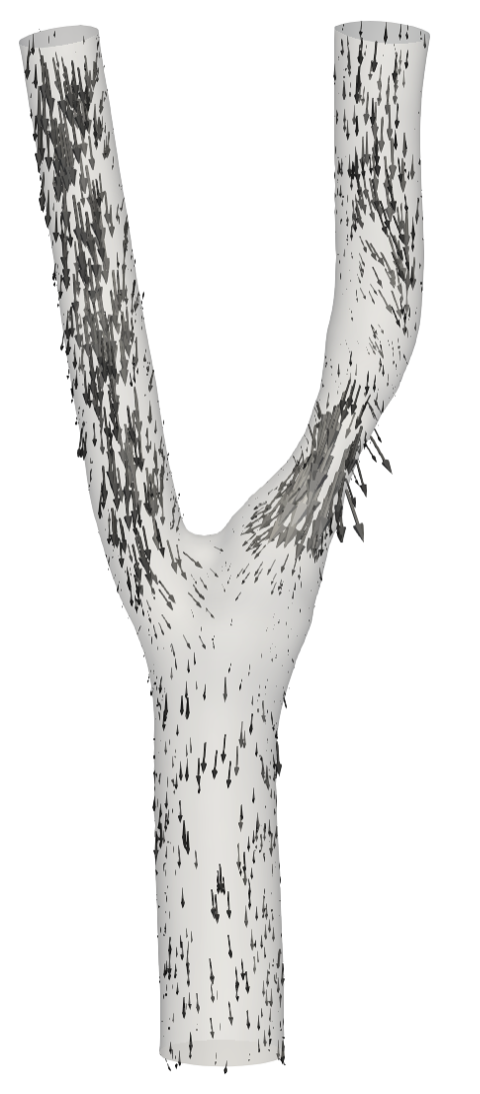}}
\subfigure[$\wss - \wss^*$.]{
\includegraphics[height=8cm]{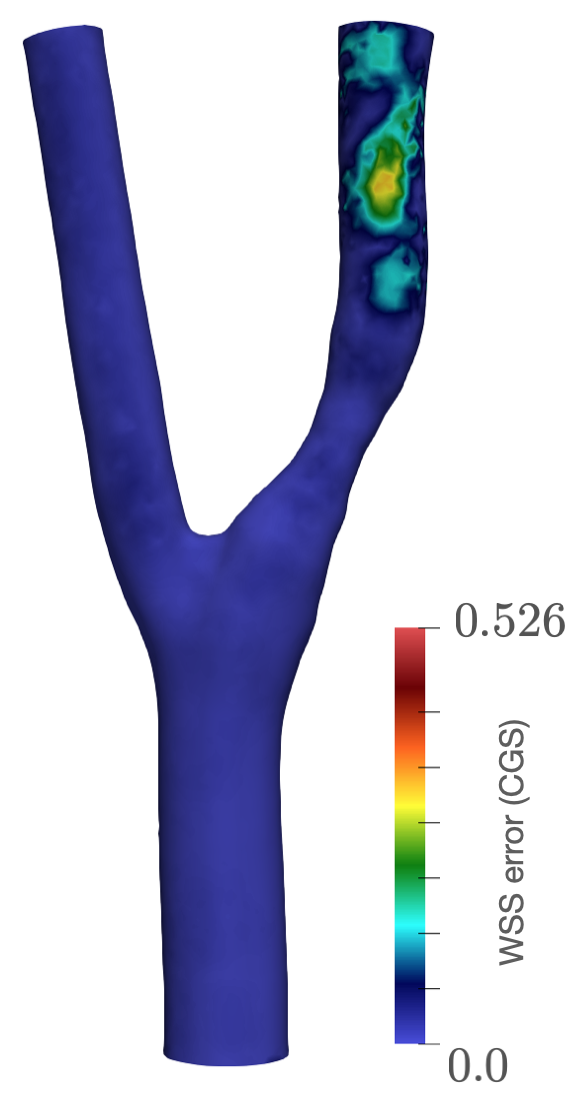}}
\caption{Example of reconstruction of the wall shear stress during the early systole period. We observe how the error from the velocity reconstruction is reproduced close to the stenosis.}
\label{fig:wss_rec_example}
\end{figure}

\subsubsection{\om{Estimation of computational costs}}
\label{ref:comp_costs}
Let us briefly discuss the computational cost of the method in terms of computational complexity. Since the forward simulations are produced using a finite element method over a mesh of $N$ vertices with $\bP_1$ elements for both velocity and pressure, the construction of a discrete manifold $\widetilde \cM$ scales like $\cO\( (4N)^2 \#\widetilde{\cM} \)$ using an iterative solver such as GMRES. If we partition $\widetilde \cM$ into $K$ subsets as explained in section \ref{sec:piecewise}, we need to compute a singular value decomposition for each subset $\widetilde \cM_k$, $k=1,\ldots,K$. Each SVD computation follows $\cO(3N (\# \widetilde \cM_k)^2 )$ for the velocity basis and $\cO(4N (\# \widetilde \cM_k)^2 )$ for the joint basis of velocity and pressure. 

To speed-up the online reconstruction of the least-squares term $v_{m,n}^*$, we precompute a QR decomposition in the offline stage at a cost $\cO \( m n^2 \)$. Once this is done, in the online phase we only need to solve a system of $n \times n$ equations to find the coefficients in the low dimensional space and then combine them in dimension $3N$ if we are doing velocity reconstruction, or $4N$ if we are doing a joint reconstruction. Since we have stored the QR decomposition, we only have a simple backward substitution for the online phase and the basis combination. Accordingly, without the pre-computation step, the velocity reconstruction costs $\cO \( n^2 + 3 n N \)$ and the joint reconstruction follows $\cO \( n^2 + 4 n N \)$.

In addition we need to compute the $m$ Riesz representers $\omega_i$. Since $W$ does not change in our application, this is done offline and we require $\cO \( (3 N)^2 m \)$ or $\cO \( (4 N)^2 m \)$ operations for the velocity reconstruction and the joint reconstruction respectively.

\subsection{Pressure drop estimation results}

This section is devoted to comparing the two reconstruction methods for the pressure drop introduced in section \ref{sec:pdrop}. 

In the first method, we first compute the joint reconstruction of velocity-pressure with the piecewise linear algorithm, and then compute the pressure drop with formula \eqref{eq:pdrop}. Figure \ref{fig:p_drop_pbdw_mixed} shows the evolution of the estimated pressure drop in 4 simulations and compares it with the evolution of the exact pressure drop. The figure shows that the methods delivers a very high accuracy.

\begin{figure}[!htbp]
\centering
\includegraphics[height=4.5cm]{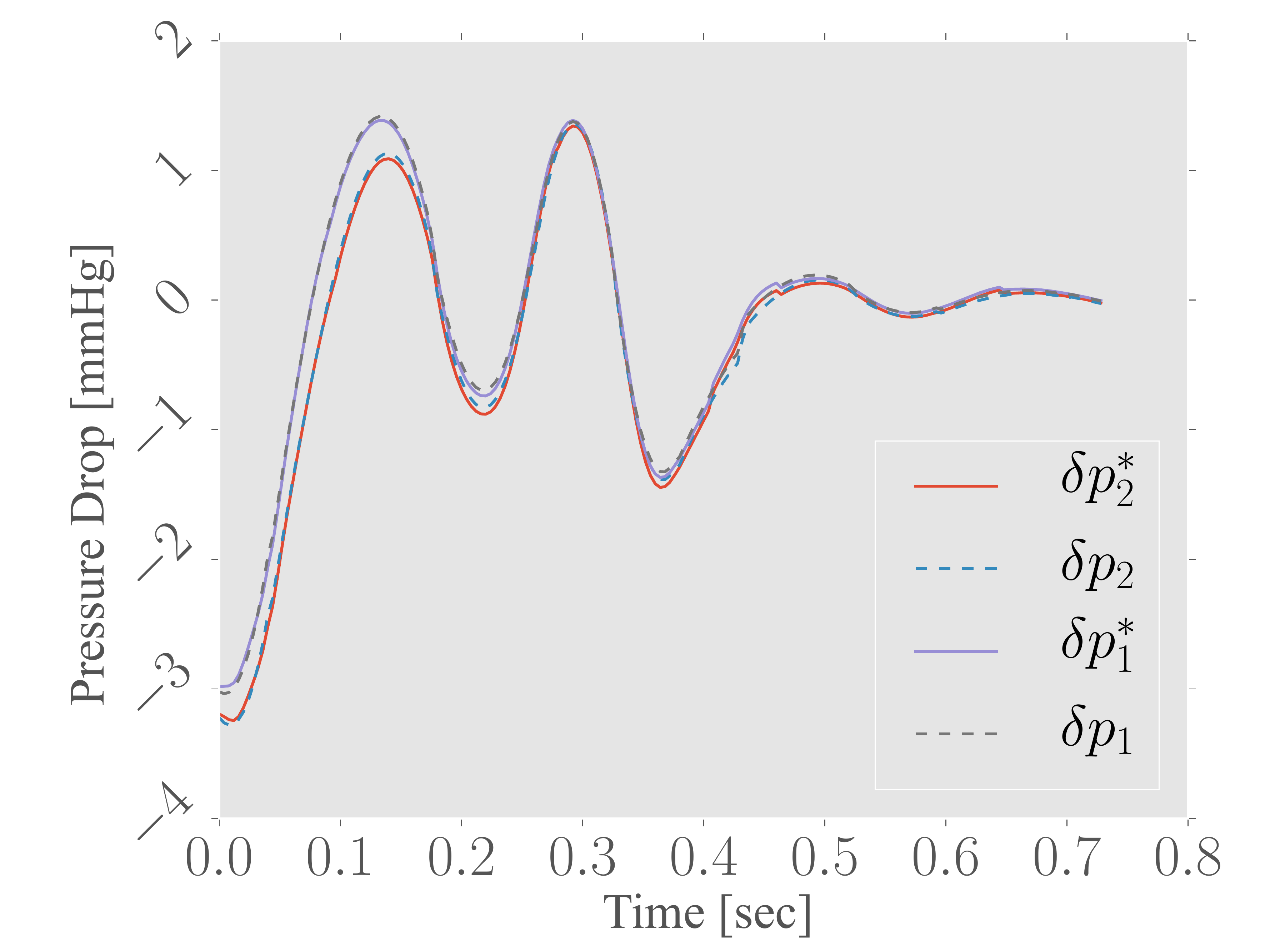}
\includegraphics[height=4.5cm]{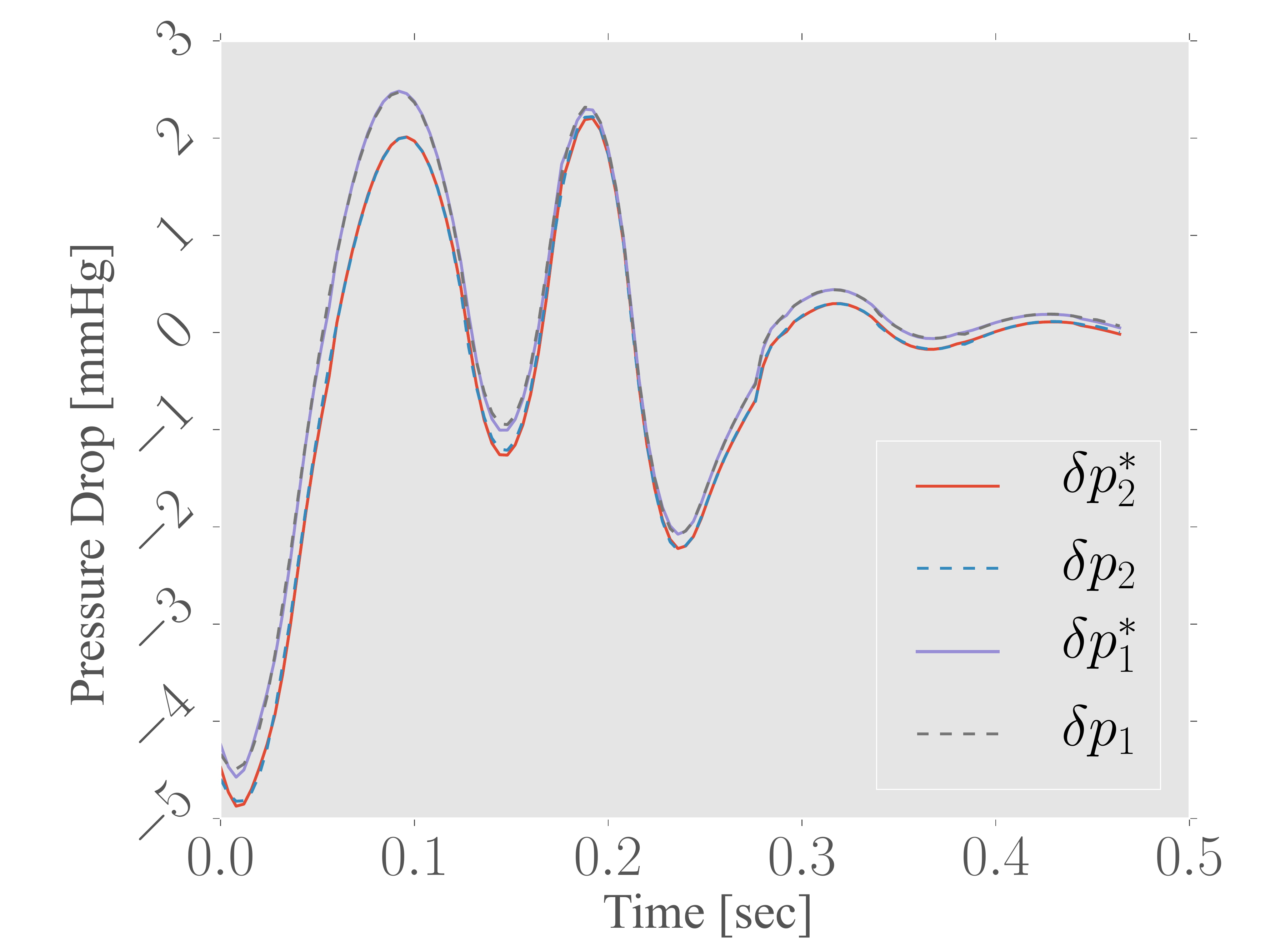}
\includegraphics[height=4.5cm]{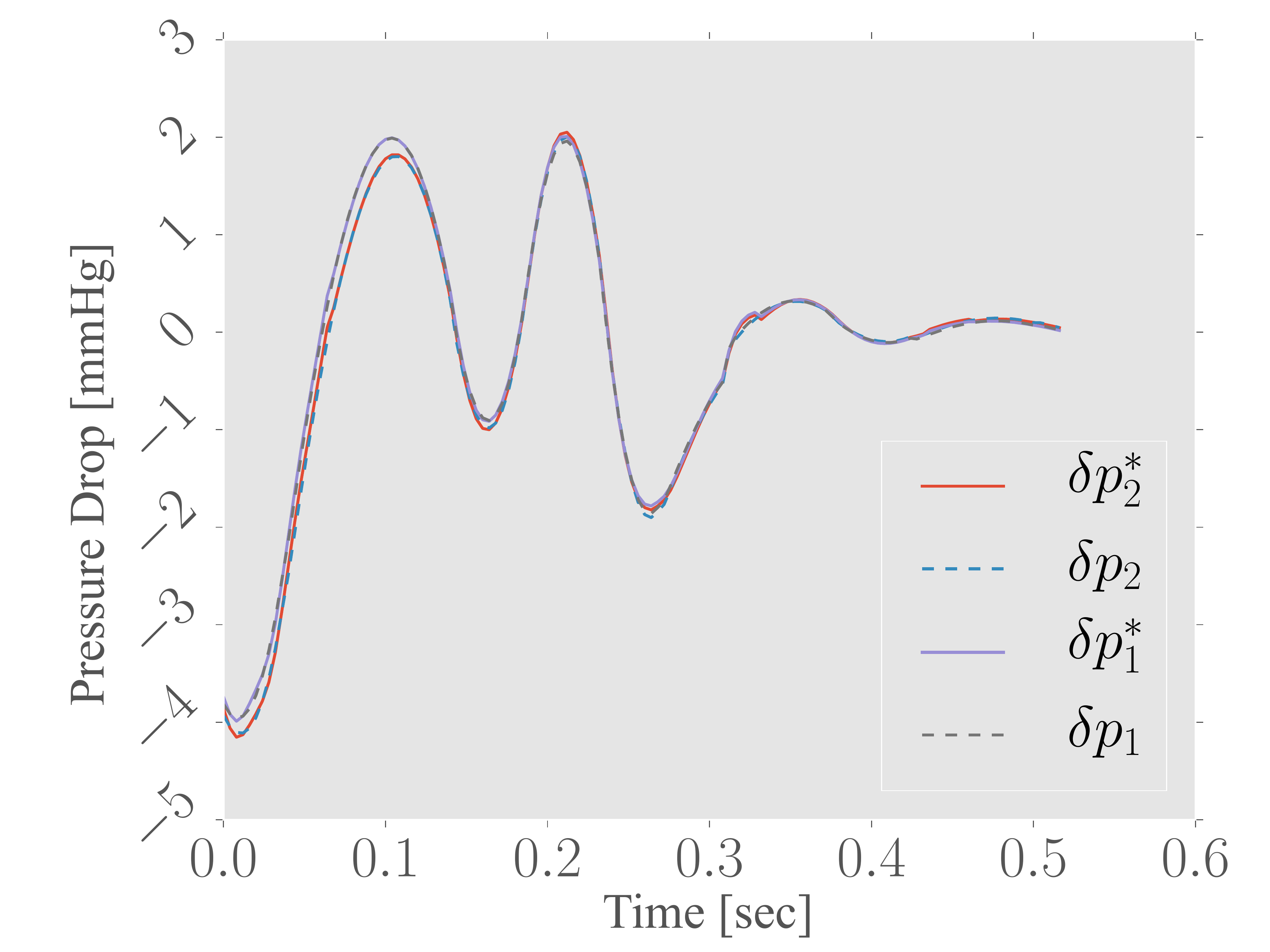}
\includegraphics[height=4.5cm]{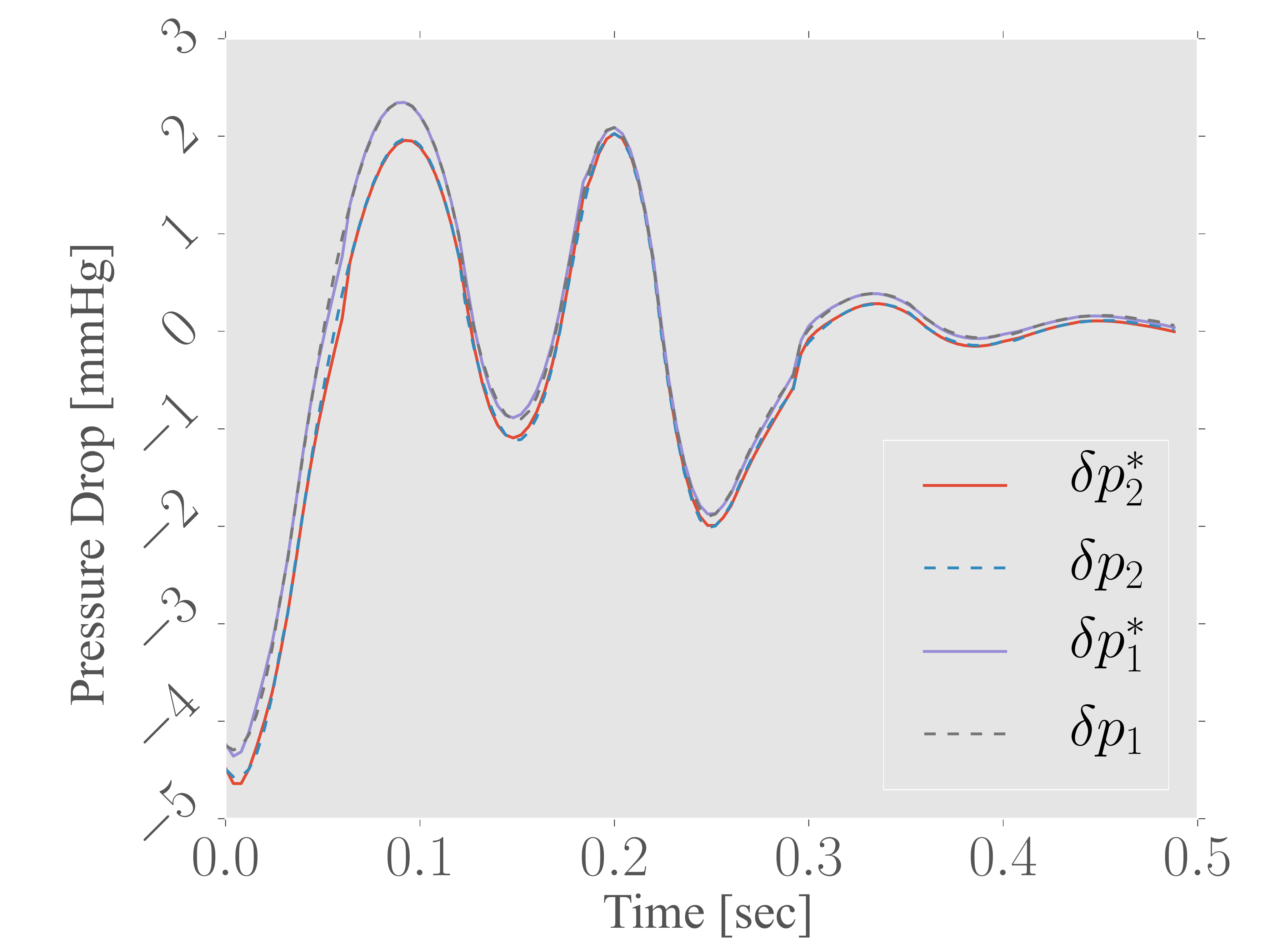}
\caption{Pressure drops $\delta p_1^*$ and $\delta p_2^*$ in four simulations using the joint reconstruction method of section \ref{sec:joint_up} with the piecewise linear algorithm. Dashed lines shows the ground truth $\delta p_1$ and $\delta p_2$. The vertical axis shows the pressure drop in [mmHg] and the horizontal axis the time in seconds.}
\label{fig:p_drop_pbdw_mixed}
\end{figure}

We can justify the obtained high accuracy by estimating the value of the stability parameter $\kappa_{m,n}$ defined in \eqref{eq:kappa_bound_pdrop}. For this, we approximate the space $W^\perp$ with
$$
\widetilde W^\perp = \vspan \{ \Psi_1, \ldots, \Psi_N \} \subset W^\perp,
$$
where $\{ \Psi_1,\ldots,\Psi_N \}$ is an orthonormal set of functions of $W^\perp$. These functions are obtained, for example, by first computing a singular value decomposition with  $N \gg n$ functions $\phi_i=(u_i,p_i)$ from the manifold $\cM$. In our case we set $N = 250$. We can then orthonormalize them with respect to $W_m = \vspan \{ \omega_1, \ldots, \omega_m \}$, which yields the desired
\begin{equation}
\Psi_i = \phi_i - P_{\Wm} \phi_i.
\end{equation}
%\begin{equation}
%\Psi_i = \phi_i - \sum_{j=1}^m \innerp{\phi_i}{\omega_j} \omega_j  - \sum_{k=1}^{i-1} \innerp{\Psi_i}{\Psi_k} \Psi_k 
%\end{equation}
We can next expand any function $\eta \in \widetilde W^\perp$ as
\begin{equation}
\eta = \sum_{i=1}^N \eta_i \Psi_i,\qquad \text{with }\eta_i = \left< \eta, \Psi_i \right>
\end{equation}

\begin{figure}[!htbp]
\centering
\subfigure[$\kappa_{m,n}$ as a function of $n$.]{
\includegraphics[height=6cm]{./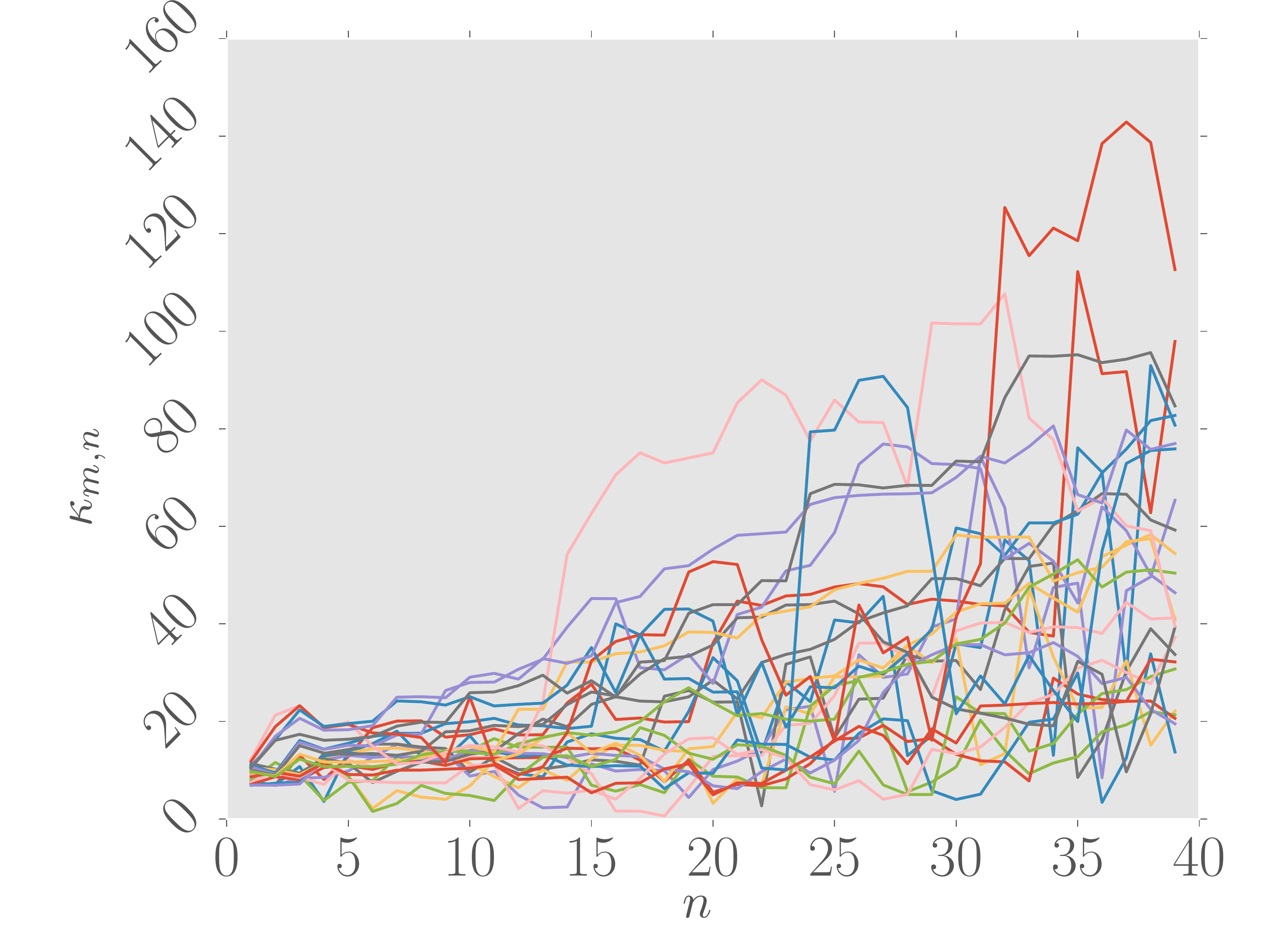}
\label{fig:mu_pbdw_joint_a}
}
\subfigure[Upper bound of pressure drop reconstruction error as a function of $n$.]{
\includegraphics[height=6.1cm]{./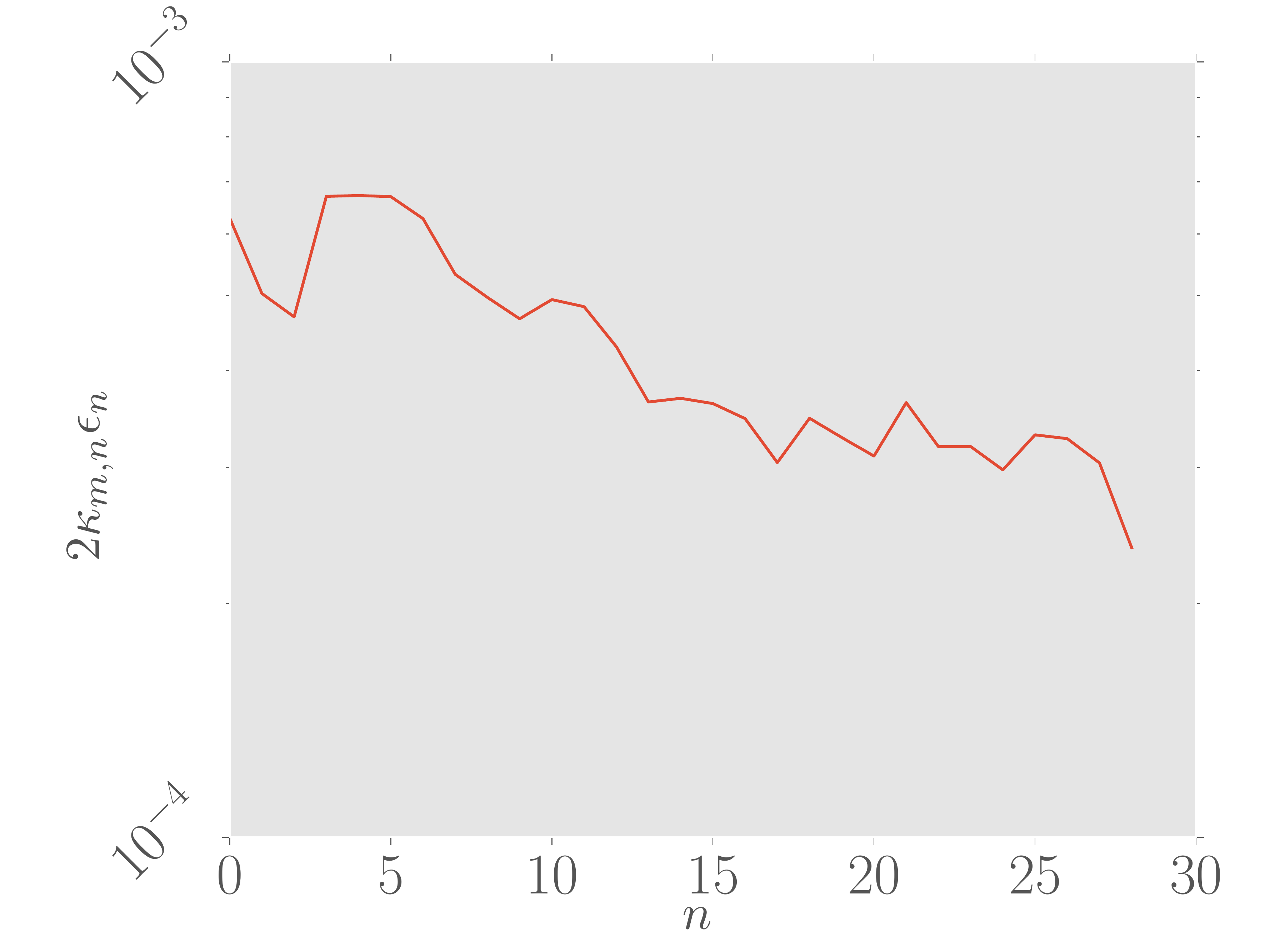}
\label{fig:mu_pbdw_joint_b}
}
\caption{Behavior of $\kappa_{m,n}$ respect to dimension of $V_n$ for each manifold partition in the method for joint reconstruction of section \ref{sec:joint_up}. In the right side we see the plot of $\max_{i} \{ 2 \kappa_{m,n} \epsilon_n \}$ for $i=1,\ldots,25$ denoting the worst among the 25 windows in the piecewise linear approach. this quantity is an upper bound of the pressure drop reconstruction error (see inequality \eqref{eq:err-p-drop}).
}
\label{fig:mu_pbdw_joint}
\end{figure}
The discrete version of equation \eqref{eq:kappa_bound_pdrop} leads to the optimization problem 
$$
\begin{aligned}
\max_{\eta \in \bR^N} \text{ }& \eta^T Q \eta \\
\text{s.t. } & \eta^T M \eta = 1,
\end{aligned}
$$
where
$$
M_{ij} = \innerp{\Psi_i - P_{V_n} \Psi_i}{ \Psi_j - P_{V_n} \Psi_j},
$$
and
$$
Q_{ij} = Q(\Psi_i) Q(\Psi_j).
$$
This problem is equivalent \fg{to} the generalized eigenvalue problem of finding $\eta \in \bR^N$ and the largest $\lambda \in \bR_+$ such that
\begin{equation}
Q \eta = \lambda M \eta
\label{eq:gevp}
\end{equation}
As a result, we can estimate the value of $\kappa_{m,n}$ with the largest eigenvalue $\lambda$ of problem \eqref{eq:gevp}. Figure \ref{fig:mu_pbdw_joint_a} shows the estimated value of $\kappa_{m,n}$ as a function of the dimension $n$ of the reduced model $V_n$. Since we use a $5\times 5$ partition of the manifold, we plot the 25 associated curves. From the figure, we deduce that $\kappa_{m,n} \leq 160$ for all partitions. As we see from Figure \ref{fig:mu_pbdw_joint_b}, the product $2\epsilon_n \kappa_{m,n}$ stays lower than $10^{-3}$ for any dimension $n$. As proven in \eqref{eq:err-p-drop}, this quantity is an upper bound of the reconstruction error and rigorously confirms the quality of the approach.

We can next examine in Figure \ref{fig:p_drop_virtual_work} the performance of the second method involving virtual works discussed in section \ref{sec:pdrop}. Although we observe good results like for the previous method, a mismatch is observed for one of the common carotid branches in the systolic phase. This error is not observed with the joint reconstruction, and it is probably due to the fact that the method involves less assumptions on the nature of the flow and pressure.

\begin{figure}[!htbp]
\centering
\includegraphics[height=5cm]{./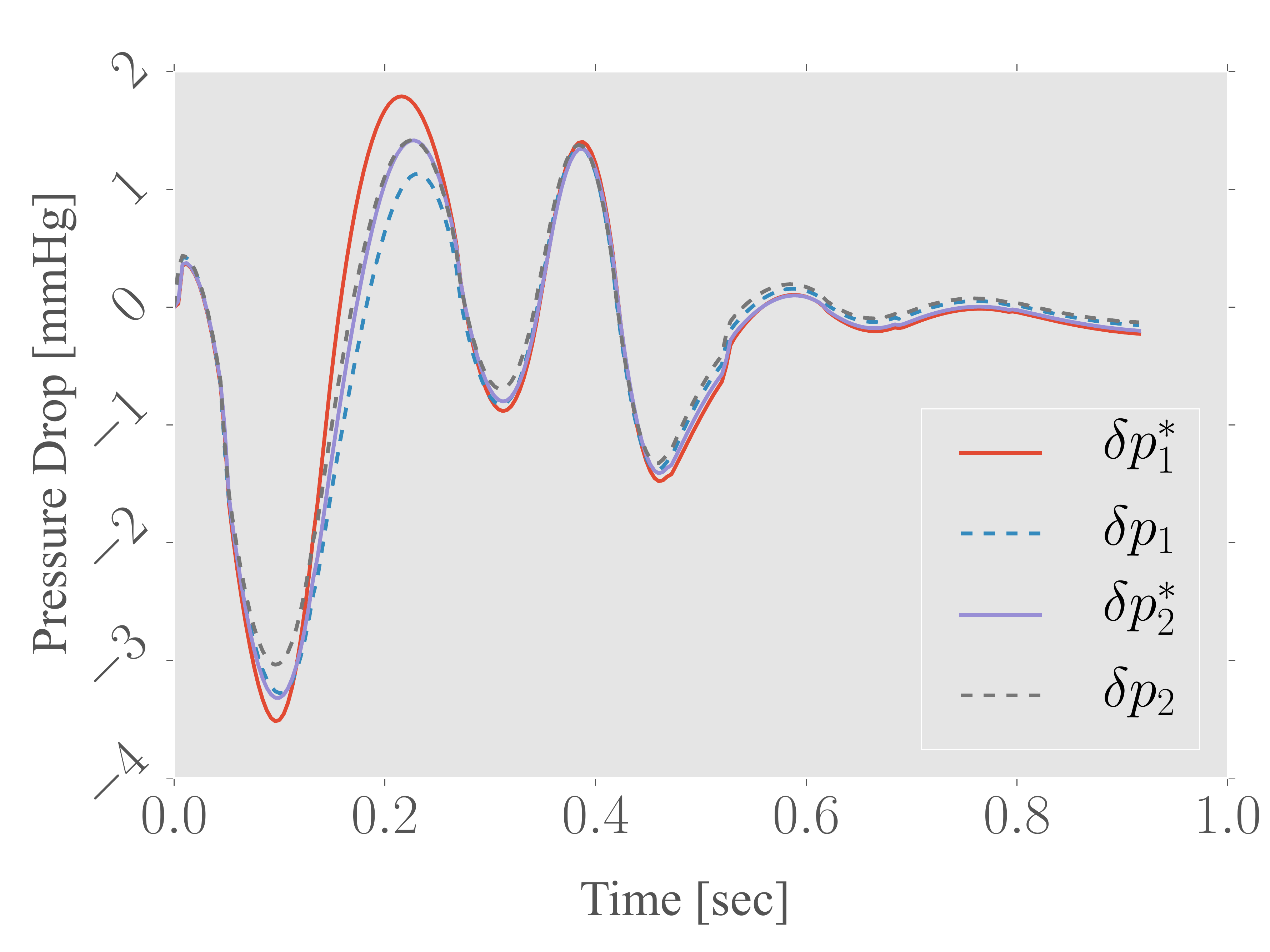}
\includegraphics[height=5cm]{./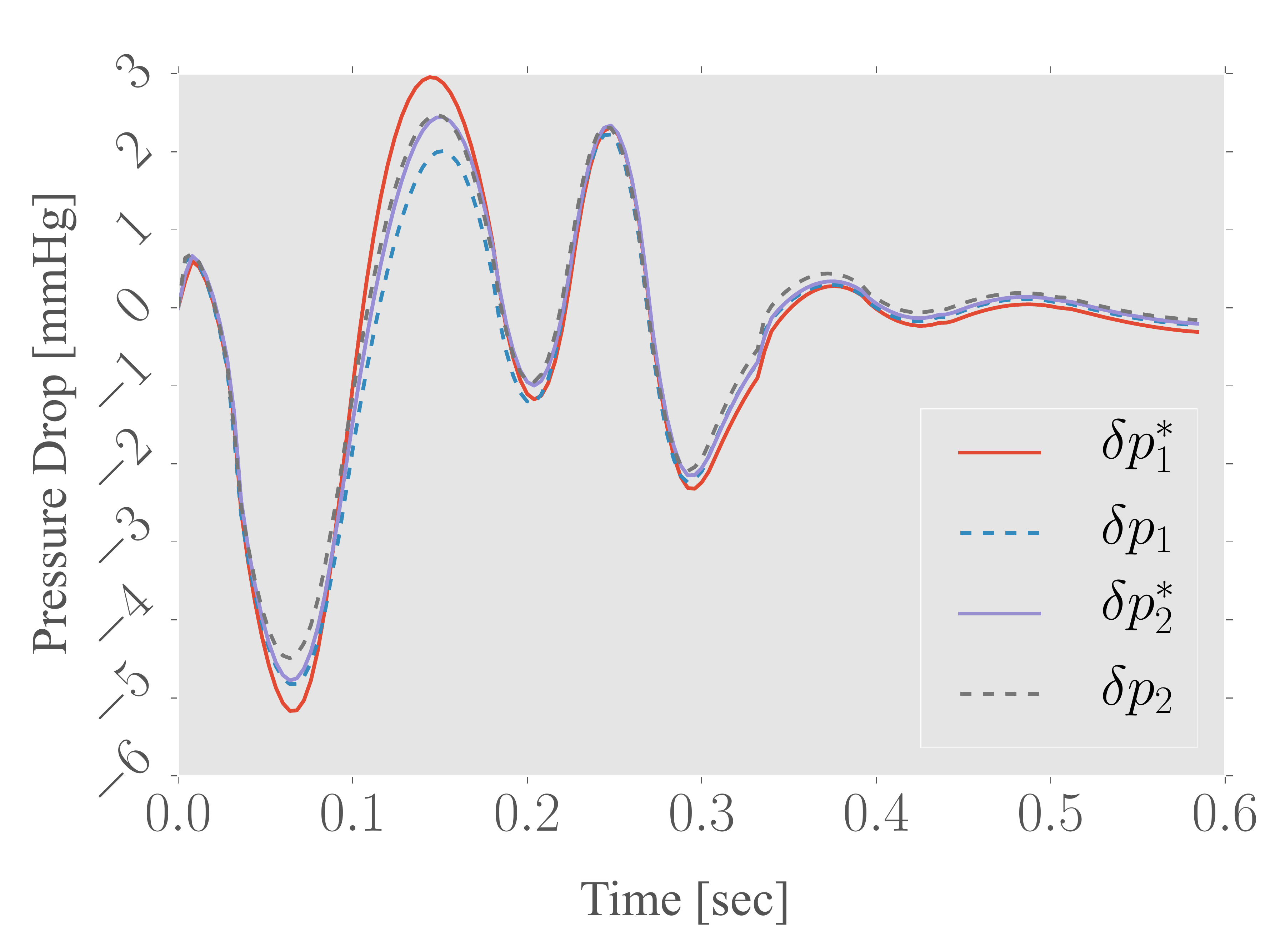}
\includegraphics[height=5cm]{./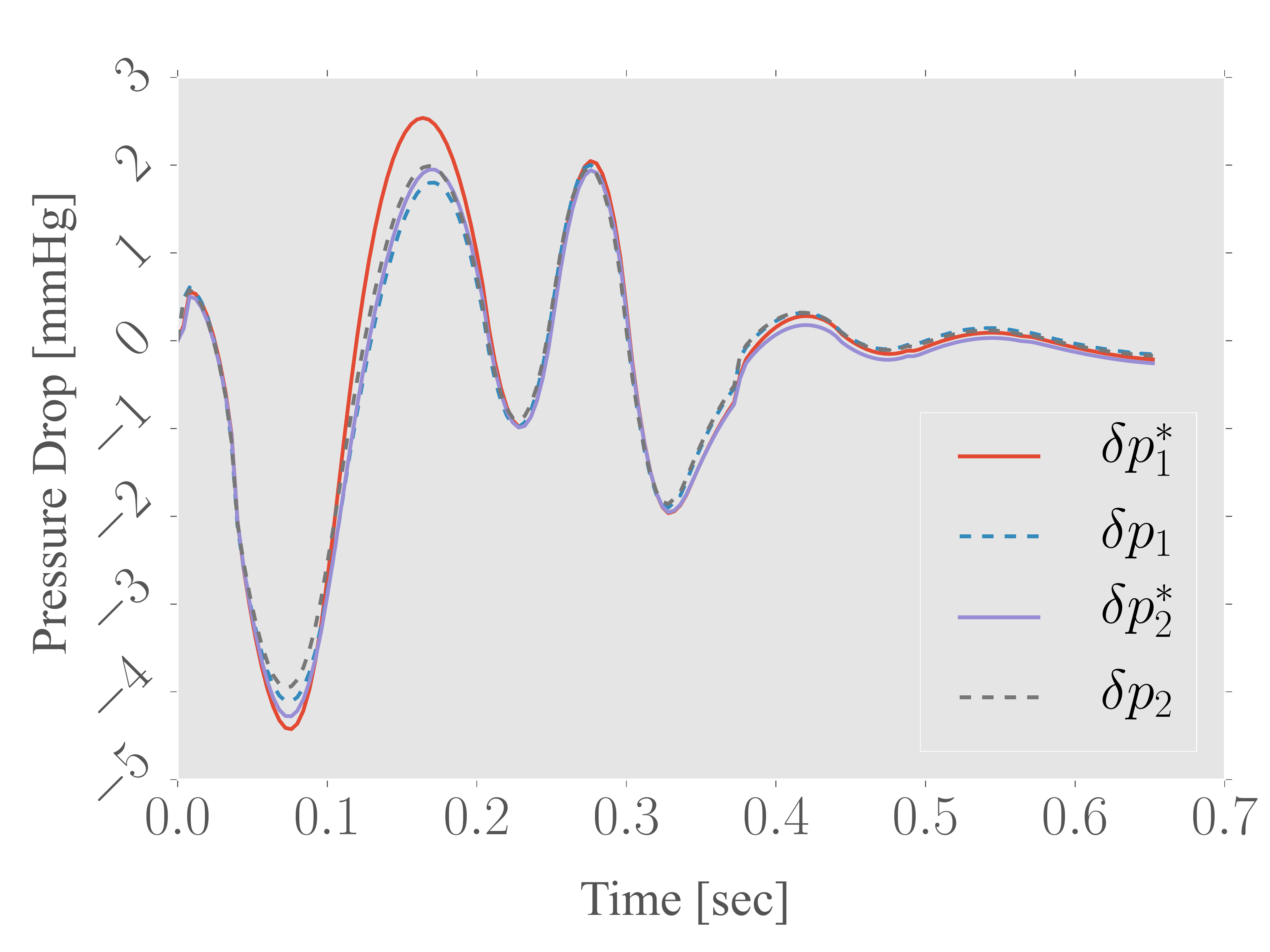}
\includegraphics[height=5cm]{./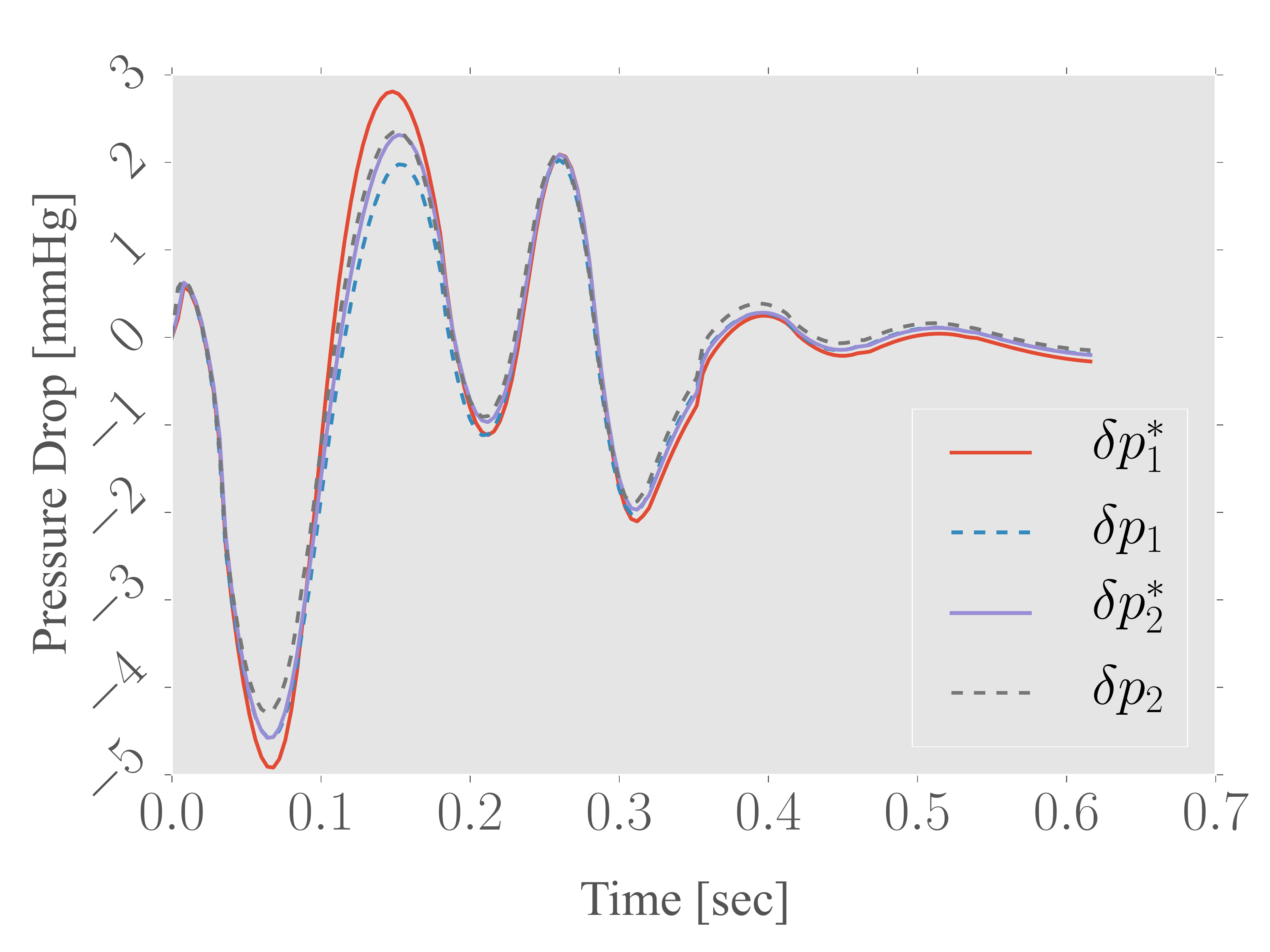}
\caption{Pressure drops $\delta p_1^*$ and $\delta p_2^*$ with virtual work principle for noise-free measures. We observe the time evolution for 4 test cases in the two carotid branches, and a comparison with the ground truths $\delta p_1$ and $\delta p_2$. Vertical axis shows the drop in [mmHg], whereas the horizontal axis shows the time evolution in seconds.}
\label{fig:p_drop_virtual_work}
\end{figure}

\section{Noisy measurements}
\label{sec:cls_test}
There are multiple sources of noise in CFI images, e.g., fake echo, reverberation, speckle, side lobes, ghosting, which result in a complicated space-time structure of the noise (see \cite{ledoux1997,bjaerum2002,demene2015}). Here we study the effect of noise in the admittedly simple case where we assume a gaussian perturbation of our observations of the form
\begin{equation}
z_i = \ell_i(u) + \eta_i
\label{eq:l_i_noise}
\end{equation}
where $\eta_i \sim \cN(0, \sigma^2)$. The noise is then independent at each voxel, averaged on the perfect measures. The standard deviation is chosen relative to the synthetic measures as
$$
\sigma = \frac{ \max_{t} \max_{i=1,\ldots,m} \ell_i(u(t)) }{ \alpha}
$$
where $\alpha>0$ is a parameter that steers the \textsl{noise level}.

As already observed in previous works, a naive reconstruction with the PBDW method with the noisy measurements $(z_i)_{i=1}^m$ is not asymptotically robust in the sense that when number $m$ of observations increases, the error bounds degrade essentially like $\sqrt{m}\sigma$. This has motivated the search for more stable formulations, and several approaches based on different types of regularization and thresholding have been proposed (see \cite{Taddei2017, ABGMM2017, GMMT2019}). Here we consider a simple variant based on a thresholding technique for $v_{m,n}^*$ (see equation \eqref{eq:vStarExplicit}) in the spirit of \cite{ABGMM2017}. To explain it, we first need to recall that in the noiseless case, $v_{m,n}^*$ is the unique minimizer of (see equations \eqref{eq:vn} and \eqref{eq:explicit-v} of the appendix)
\begin{equation}
\label{eq:vnmin}
v_{m,n}^* = \argmin_{v\in V_n} \frac 1 2 \Vert P_{W_m}u - P_{W_m} v \Vert^2.
\end{equation}
A slightly different approach is to find $\tilde v_{m,n}^*\in V_n$ as
\begin{equation}
\label{eq:tilde-vnmin}
\tilde v_{m,n}^* =
\argmin_{v \in V_n} \frac 1 2 \sum_{i=1}^m \vert \ell_i(u) - \ell_i(v) \vert_{\ell(\bR^m)}^2.
%\min_{\textbf{c} \in \bR^n} \frac 1 2 \sum_{i=1}^m \vert \ell_i(u) - \sum_{j=1}^n c_j \ell_i(v_j)\vert_{\ell(\bR^n)}^2.
\end{equation}
We may note that, in general, $\tilde v_{m,n}^* \neq v_{m,n}^*$ except if $\{\omega_i\}_{i=1}^m$ is an orthonormal family in $V$. In presence of noise, we measure $z_i$ and not $\ell_i(u)$ so the minimization becomes
\begin{equation}
\label{eq:tilde-vnmin-noise}
\min_{v \in V_n} \frac 1 2 \sum_{i=1}^m \vert z_i - \ell_i(v) \vert_{\ell(\bR^m)}^2.
%\min_{\textbf{c} \in \bR^n} \frac 1 2 \sum_{i=1}^m \vert \ell_i(u) - \sum_{j=1}^n c_j \ell_i(v_j)\vert_{\ell(\bR^n)}^2.
\end{equation}
To make this reconstruction more robust againt noise, instead of minimizing over the whole  space $V_n$, we can use the structure of the PDE solution manifold $\cM$ and minimize over its ``footprint'' on $V_n$, that is,
$$
\cK_n = P_{V_n} \cM \coloneqq \{ P_{V_n} u \, :\, u \in \cM \}.
$$
The resulting minimization reads
\begin{equation}
\label{eq:v-Kn}
\hat v^*_{m,n} = \argmin_{v \in \cK_n} \frac 1 2 \sum_{i=1}^m \vert z_i - \ell_i(v) \vert_{\ell(\bR^m)}^2.
%\min_{\textbf{c} \in \bR^n} \frac 1 2 \sum_{i=1}^m \vert \ell_i(u) - \sum_{j=1}^n c_j \ell_i(v_j)\vert_{\ell(\bR^n)}^2.
\end{equation}
In practice, if $\{ v_i \}_{i=1}^n$ is an orthonormal basis of the space $V_n$, we can compute the coefficients $\textbf{c}^*\in \bR^n$ of $\hat v^*_{m,n}$ in this basis by solving the constrained least-squares problem
\begin{equation}
\begin{aligned}
\min_{\textbf{c} \in \bR^n} & \frac{1}{2} \sum_{i=1}^m \abs{ z_i -  \sum_{j=1}^n c_j \ell_i(v_j) }^2 \\
\text{s.t. }& \abs{c_j} \leq \max_{u \in \cM} \abs{\innerp{u}{v_j}},\quad j=1,\dots n.
\end{aligned}
\label{eq:ls_pod_discrete_constraints}
\end{equation}

We next study the reconstruction error with the unconstrained and constrained approaches \eqref{eq:tilde-vnmin-noise} and \eqref{eq:ls_pod_discrete_constraints}. Figures \ref{fig:cls_noise_u} and \ref{fig:cls_noise_up}  show respectively the error against the dimension $n$ of $V_n$ for the velocity reconstruction and velocity-pressure reconstruction. For each value of $n$, we compute the average error over 100 realizations of the noisy measurements for different levels $\alpha$ of the noise. The noiseless case is labeled $\alpha=\infty$ in the plots. The test case is focused on the first time partition, during the systolic phase of the cardiac cycle, and for snapshots in the lower heart rate partition. As expected, the quality of the reconstruction degrades when the level of the noise increases ($\alpha$ decreases). We observe that both constrained and unconstrained methods behave very similarly for a low number of modes. For the ambient space $V = U$, the constrained approach is able to grant a better reconstruction as we increase the dimension of the space $V_n$. However, for the ambient space $V = U\times P$, the constrained approach does not bring any improvement with respect to the unconstrained one.

\begin{figure}[!htbp]
\centering
\label{fig:cls_noise}
\subfigure[$V=U$.]{
\includegraphics[height=5.5cm]{./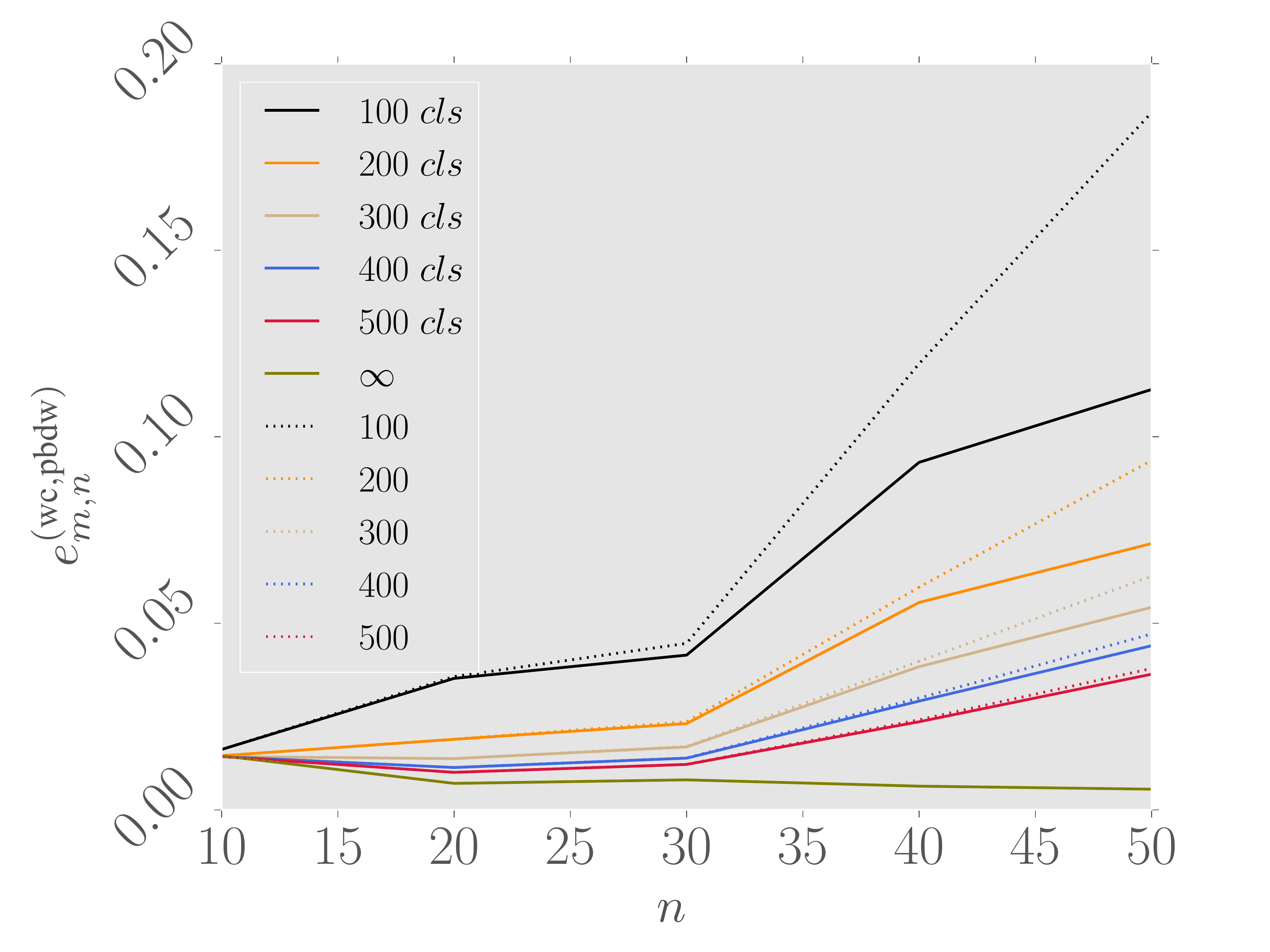}
\label{fig:cls_noise_u}
}
\subfigure[$V=U \times P$.]{
\includegraphics[height=5.5cm]{./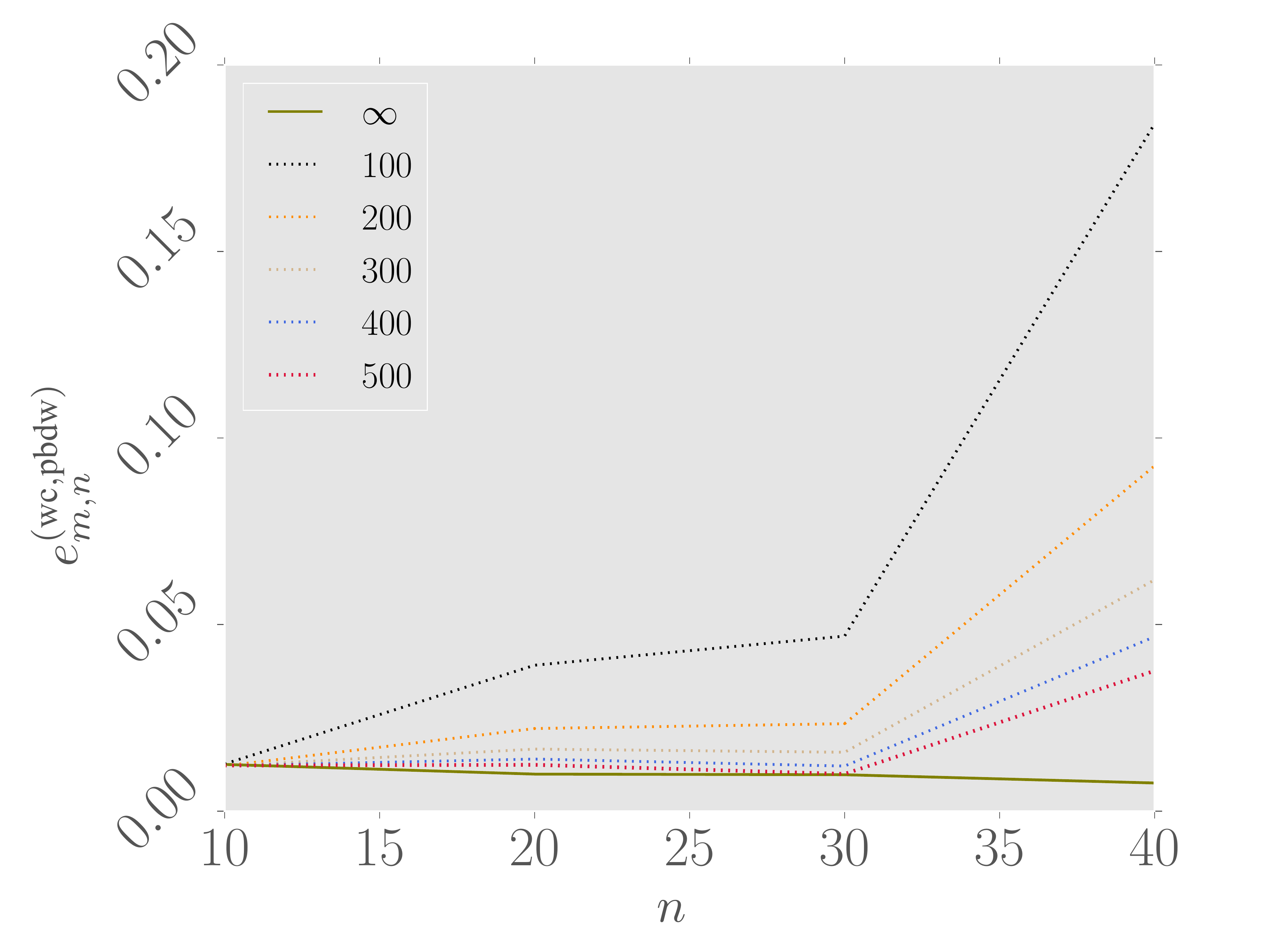}
\label{fig:cls_noise_up}
}
\caption{Reconstruction error \eqref{eq:err-wc-pbdw} in one of the manifold partitions. Dots: unconstrained approach \eqref{eq:tilde-vnmin-noise}. Full line: constrained approach \eqref{eq:ls_pod_discrete_constraints}.  Curves for the constrained and unconstrained approach overlap for the joint reconstruction in $V = U \times P$, showing that the constraints do not bring any improvement.}
\end{figure}
\begin{figure}[!htbp]
\centering
\subfigure[Solution 1, velocity.]{\includegraphics[height=5cm]{./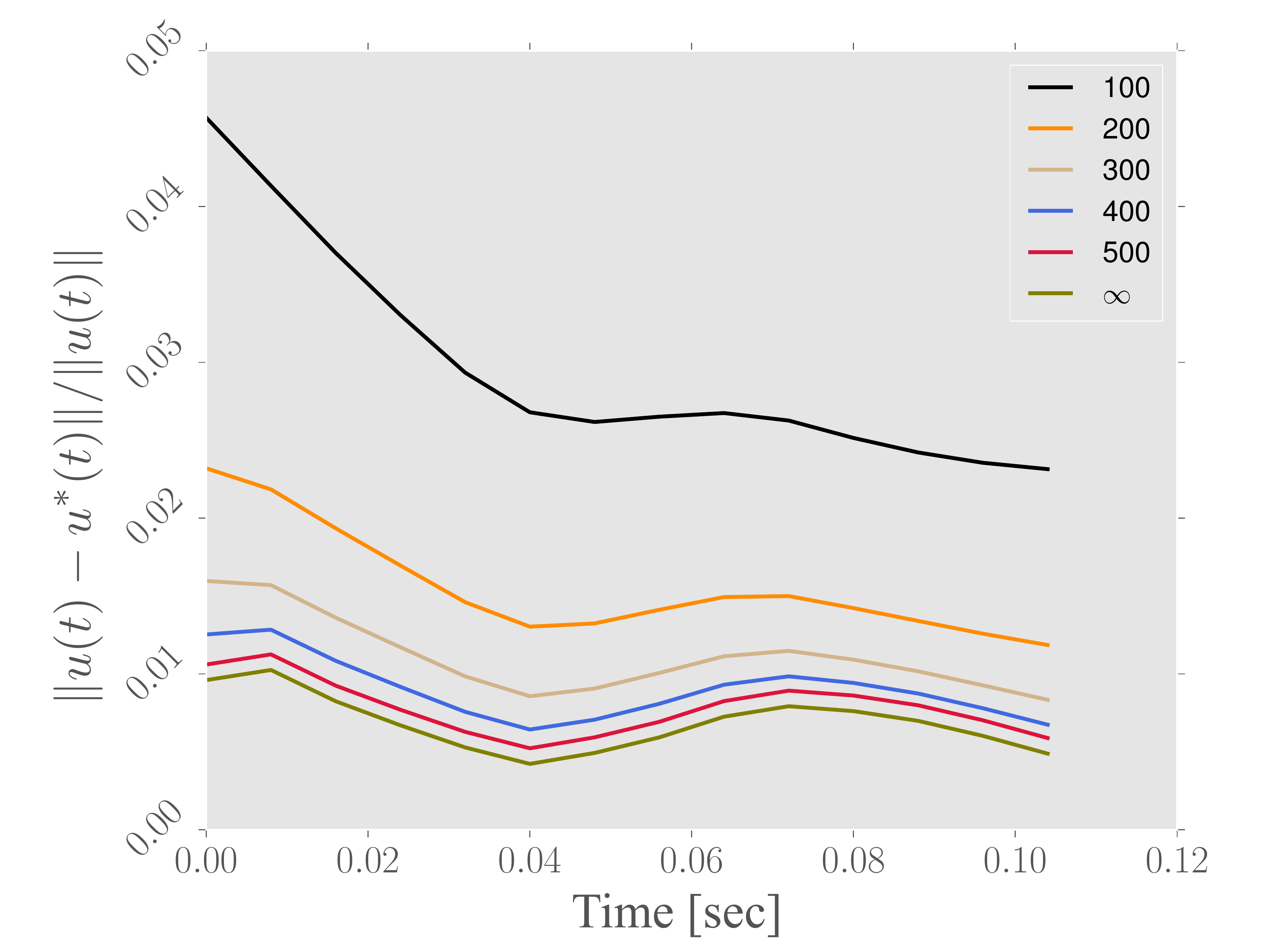}}
\subfigure[Solution 1, pressure.]{\includegraphics[height=5cm]{./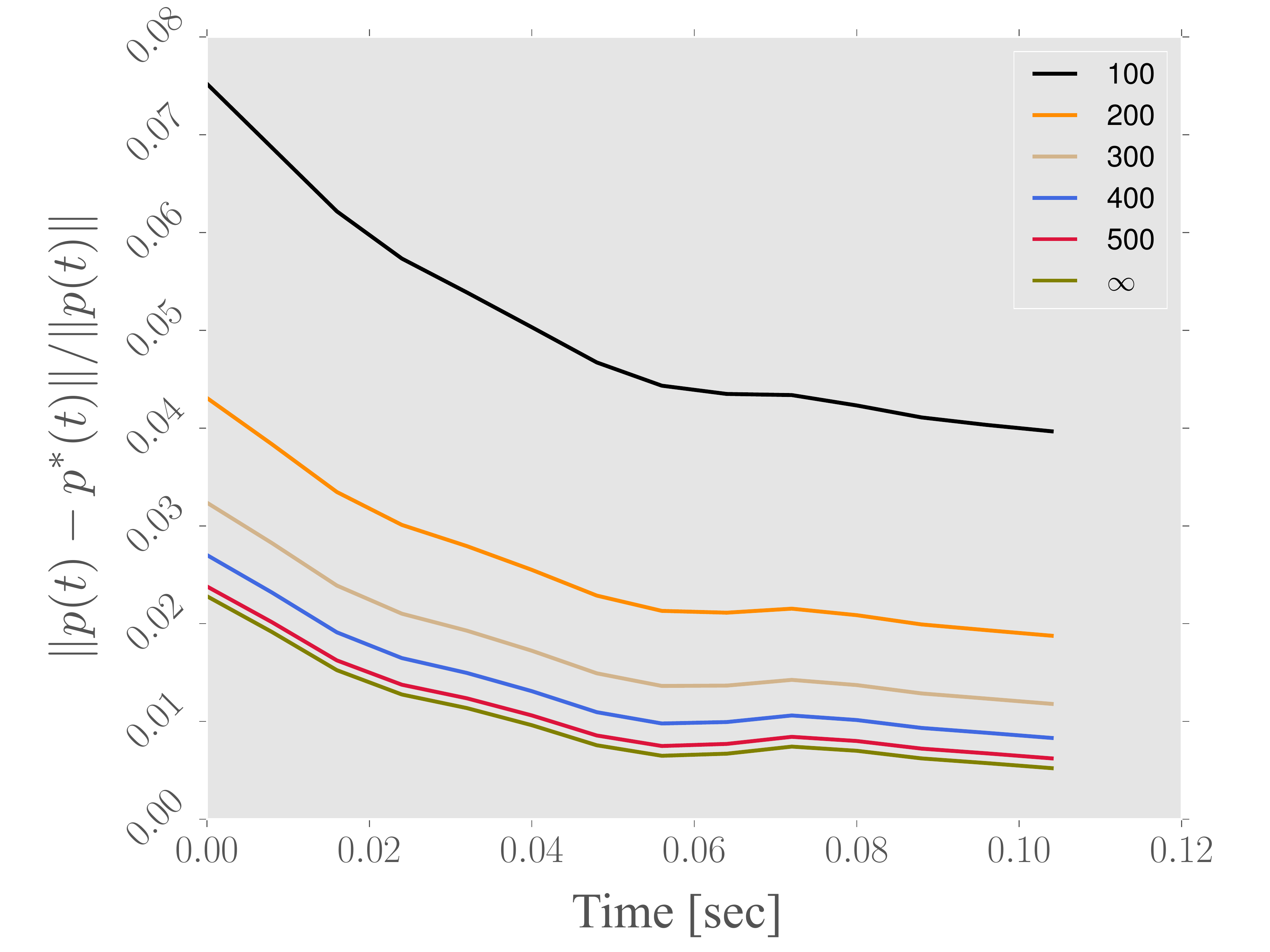}}
\subfigure[Solution 13, velocity.]{\includegraphics[height=5cm]{./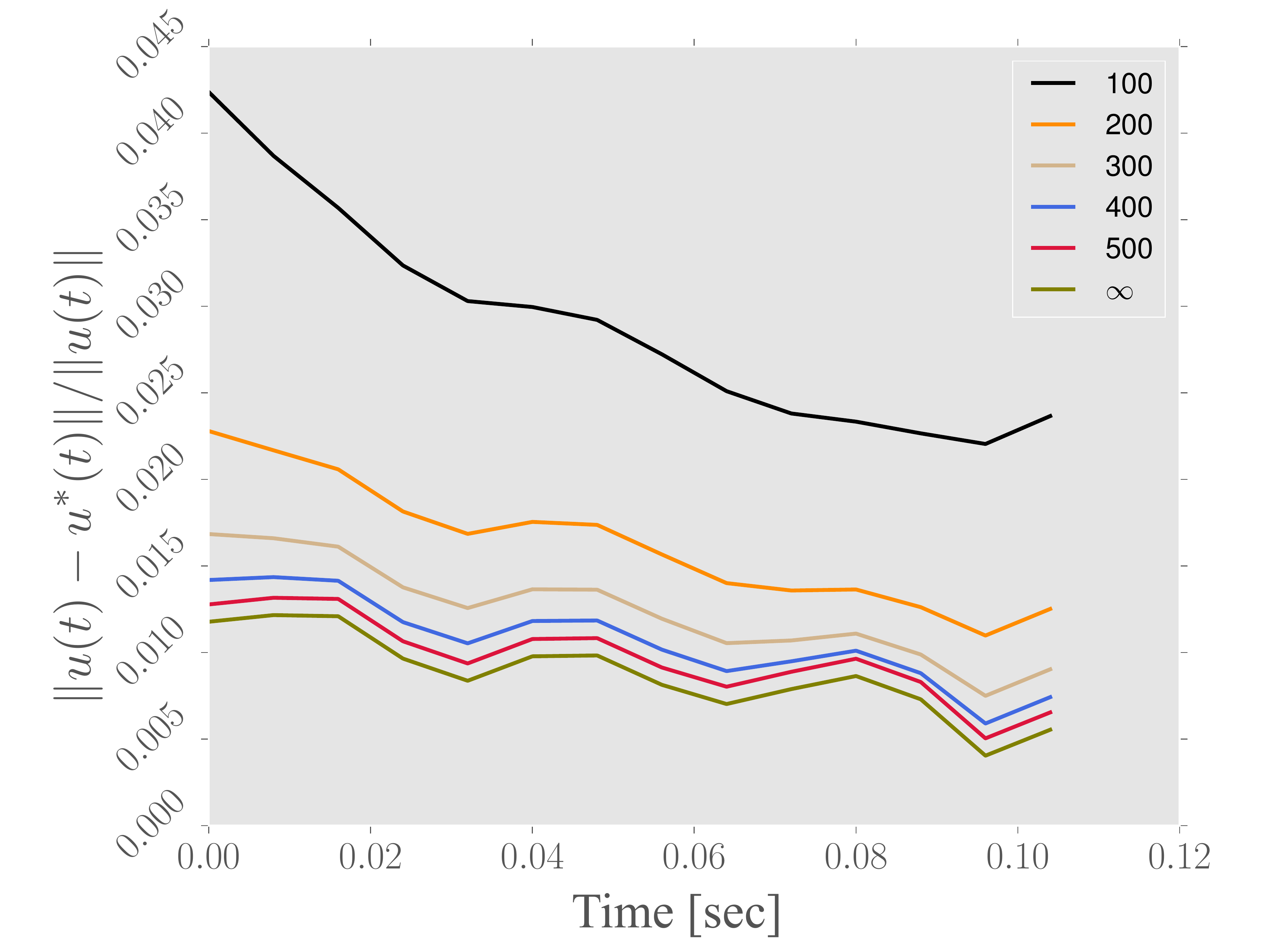}}
\subfigure[Solution 13, pressure.]{\includegraphics[height=5cm]{./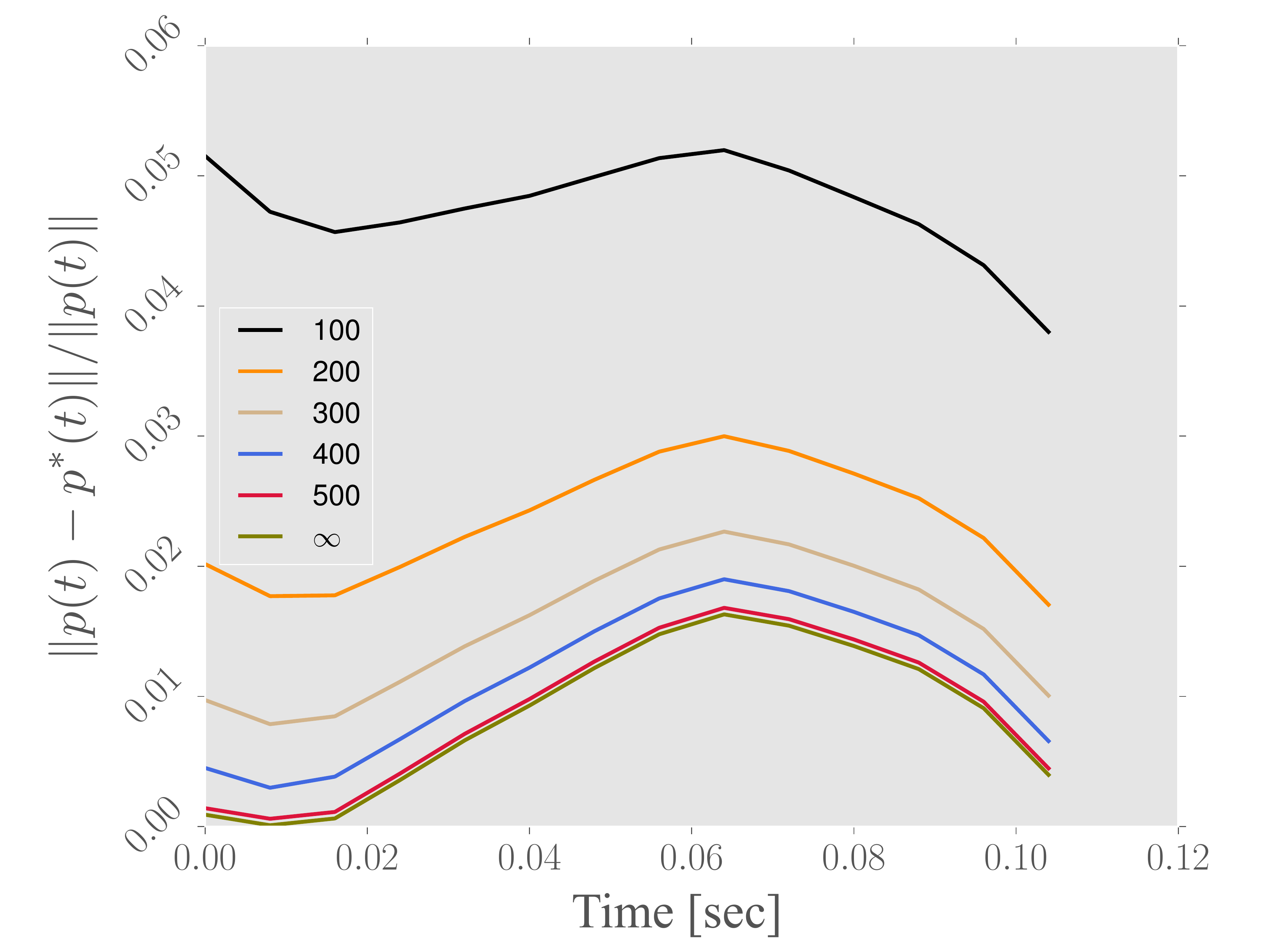}}
\caption{\fg{Relative errors in joint reconstruction for two time dependent solutions. Figure \ref{fig:cls_noise_up} suggests us to use 30 modes in the reconstruction.}}
\label{fig:u_p_noise_30}
\end{figure}

Figures \ref{fig:u_p_noise_30} and \ref{fig:pbdw_mixed_pdrop} show the reconstruction error for velocity and pressure fields, and the pressure drop computed from the joint reconstruction in $V = U \times P$. As in the noise-free numerical experiment, the reconstruction output is very satisfactory for all quantities. We observe that the pressure drop reconstruction is more robust to noise than the reconstruction of the full 3D pressure field although the reconstructed pressure drop is derived from the reconstructed 3D field. For all the cases, the reconstruction was done with a dimension $n=30$ for $V_n$.
\begin{figure}[!htbp]
\centering
\subfigure[\dl{Solution} 1, $\Gamma_o^1$.]{\includegraphics[height=6cm]{./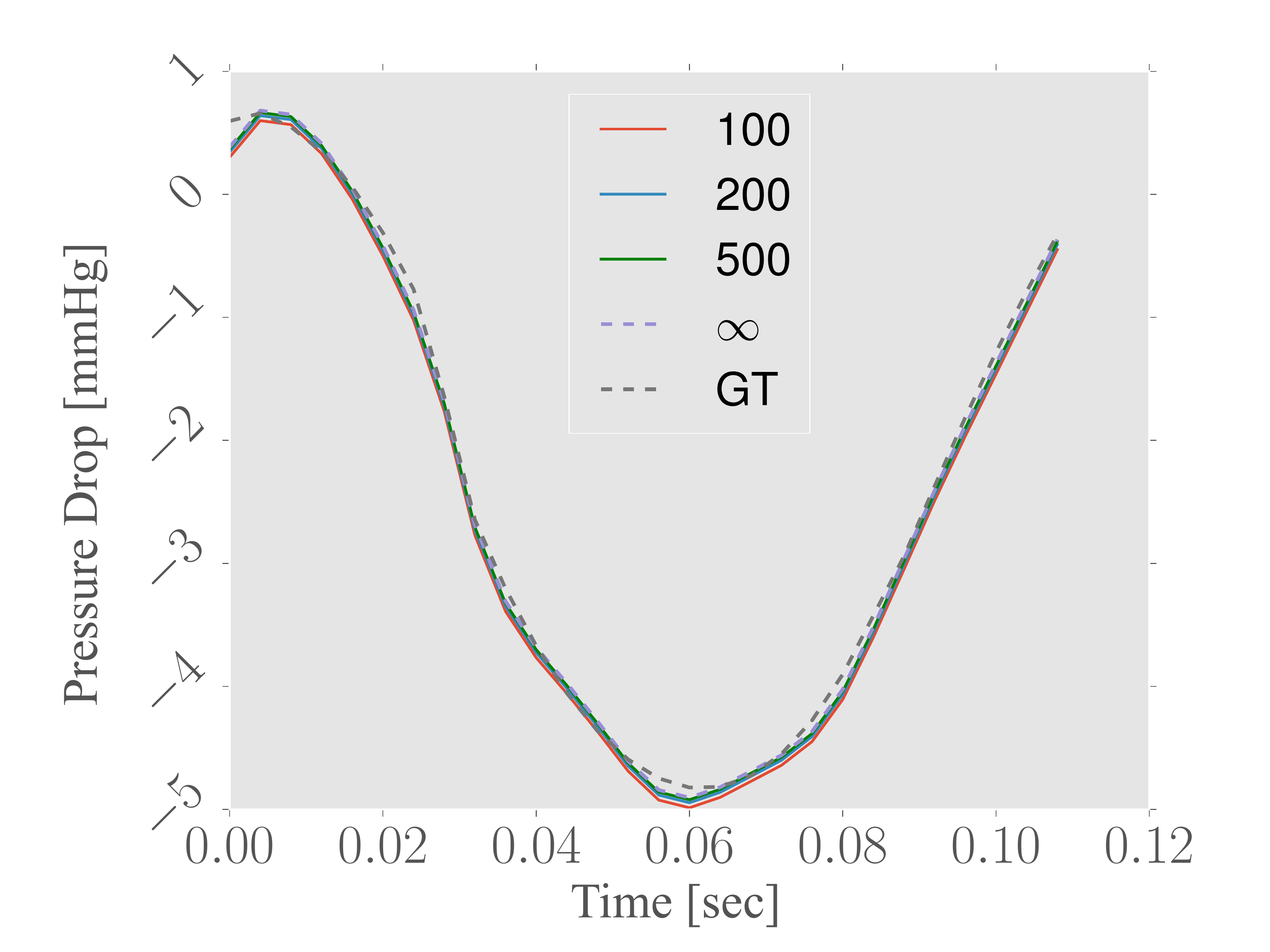}}
\subfigure[\dl{Solution} 1, $\Gamma_o^2$.]{\includegraphics[height=6cm]{./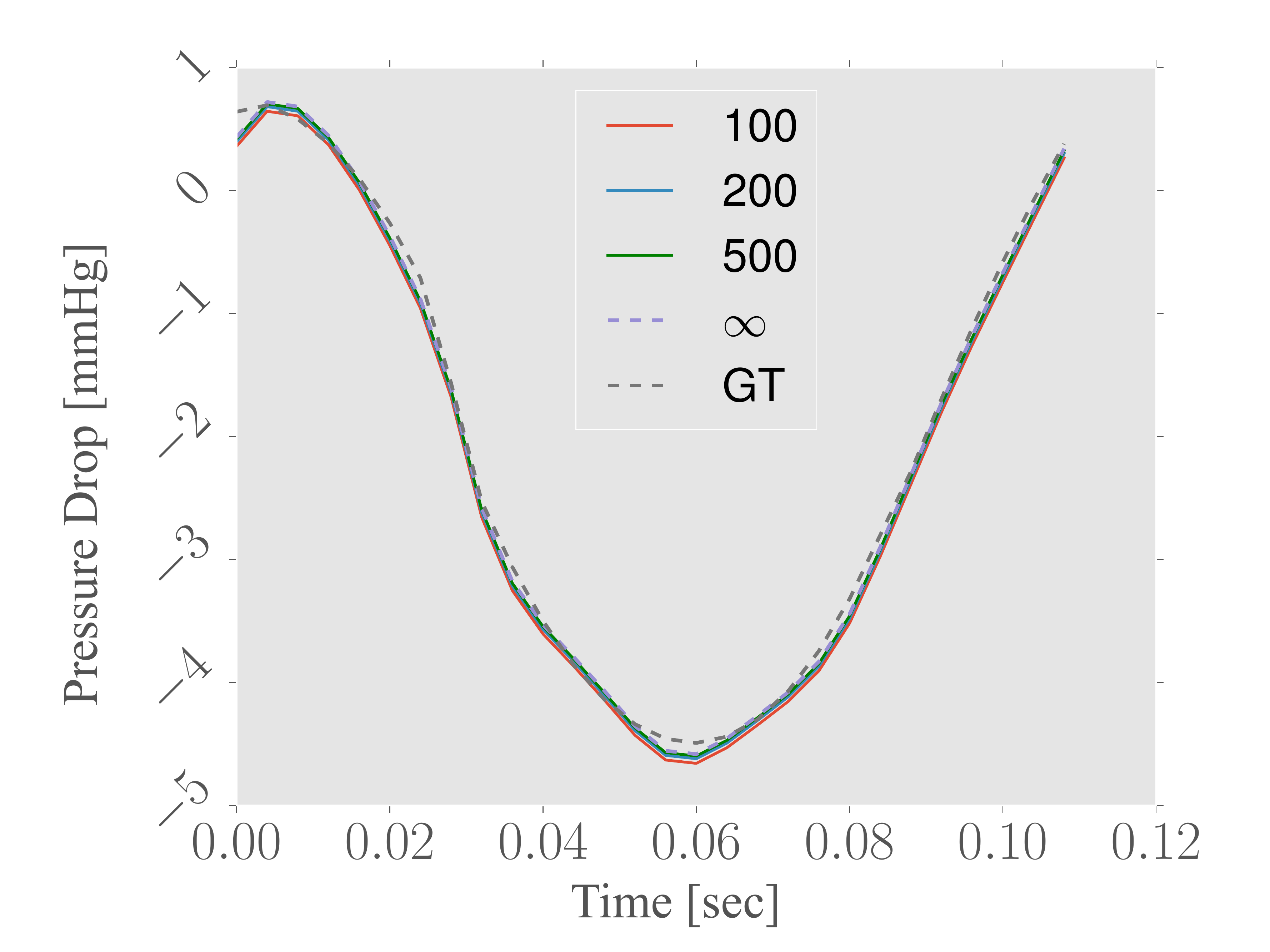}}
\subfigure[\dl{Solution} 13, $\Gamma_o^1$.]{\includegraphics[height=6cm]{./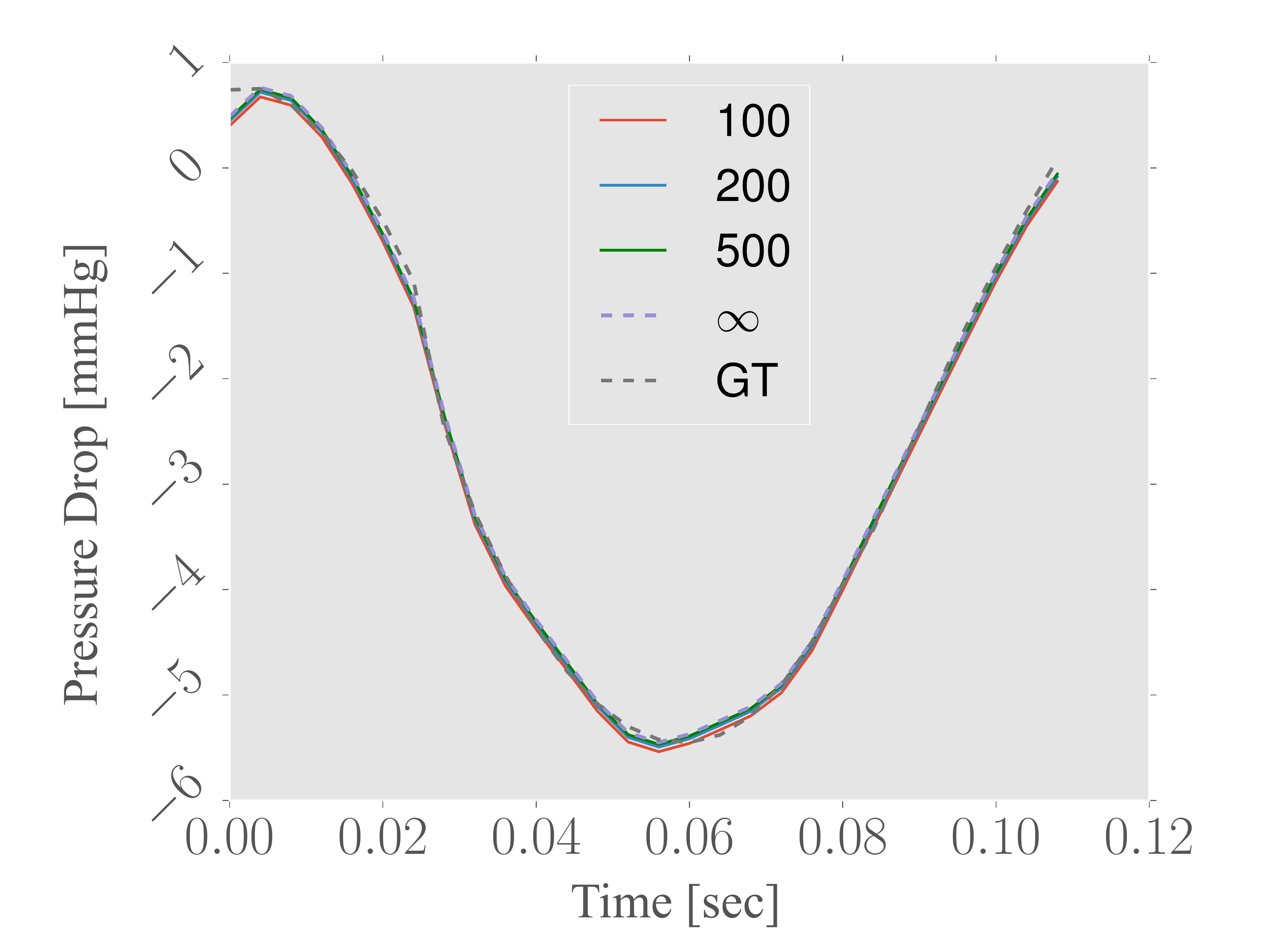}}
\subfigure[\dl{Solution} 13, $\Gamma_o^2$.]{\includegraphics[height=6cm]{./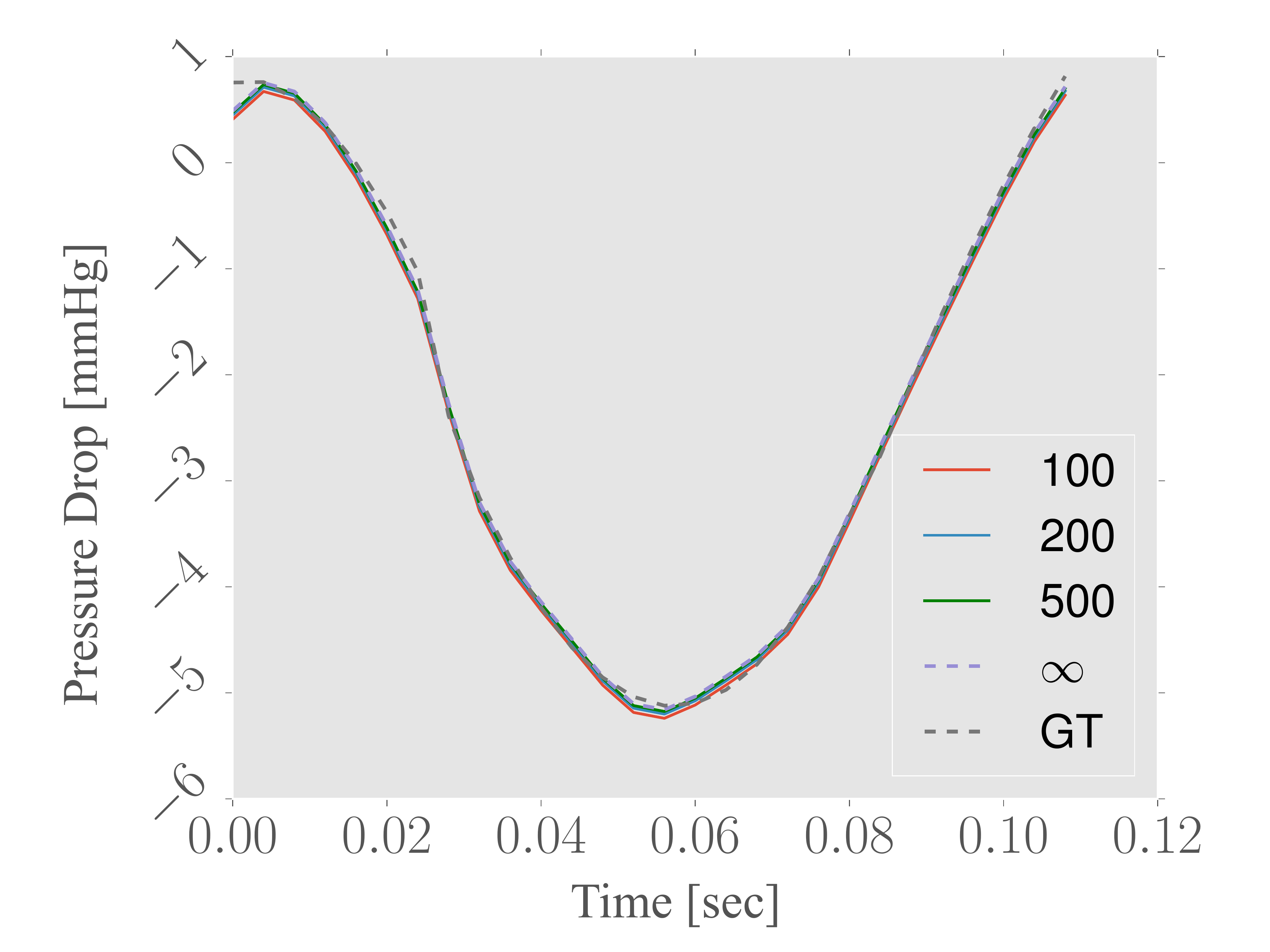}}
\caption{Reconstruction of pressure drop for 3 noise levels in two \dl{different solutions}. The results are presented for the early systole phase. Vertical axis shows the pressure drop in [mmHg] and the horizontal axis the time in seconds.}
\label{fig:pbdw_mixed_pdrop}
\end{figure}

\section{Conclusions and perspectives}

We conclude this paper by summarizing the main topic and contributions. We have proposed a systematic methodology involving reduced modelling to give quick and reliable estimations of QoI in biological fluid flows. We have assessed the feasibility of the approach in non trivial numerical examples involving the carotid artery. The numerical examples include:
\begin{itemize}
\item The reconstruction of velocity related quantities such as vorticity and wall shear stress from Doppler data, both holding average errors below the $5 \%$ in an $H^1$ sense. \fg{At worst, we have observed a maximal error during the cardiac cycle for one of the test cases below the $10 \%$.}
\item The reconstruction of unobserved QoI from Doppler data such as pressure fields and pressure drops, with comparisons with other state-of-the-art techniques \cite{bertoglio_pdrop}.
\item The simulation of semi-realistic measures by considering white noise in the input signals.
\item A theoretical study of the reconstruction error in all QoI. In particular, the numerical results confirm that the bound for the pressure drop estimation is rather sharp.
\end{itemize}
Although the present results are promising, they remain a proof-of-concept since the Doppler images and the flows serving as the ground truth are synthetically generated. 
To go further \dl{we need to include in the current methodology the uncertainty and the anatomic variability of the vascular geometry.  
For a given patient, the geometry is only approximately known (acquired by medical imaging) and this translates into an uncertainty in the predictions. Also, when we consider multiple patients, they all have a different geometric configurations for the same artery. To take into account these geometrical variations and uncertainties, we are currently extending the current methodolgy which, so far, heavily relies on the assumption that the spatial domain is fixed in the offline and online phases.} In addition, the methodology needs to be validated with real flows and real ultrasound images. This step poses however a certain number of challenges that we will address in a collaboration involving medical doctors and experts in 3D printing. The main roadmap is: (i) to manufacture arteries with similar mechanical properties as the biological ones, and favorable optical properties to collect ultrasound and PIV measurements. (ii) Once this is done, we will collect the ultrasound images and feed our reconstruction algorithms. We will compare the reconstructions with PIV images \dl{for what concerns velocity and with catheter measurements for what concerns pressure.}

\section*{Acknowledgements}

This research was supported by the EMERGENCES grant of the Paris City Council “Models and Measures”. Also, authors thankfully acknowledge the financial support of the ANID Ph. D. Scholarship 72180473.

\appendix
\section{Explicit expression and algebraic formulation of $u_{m,n}^*(\omega)$, the function given by the linear PBDW algorithm}
\label{appendix:linear-pbdw}
Let $X$ and $Y$ be two finite dimensional subspaces of $V$ and let
\begin{align*}
P_{X | Y} : Y &\to X \\
y &\mapsto P_{X | Y} (y)
\end{align*}
be the orthogonal projection into $X$ restricted to $Y$. That is, for any $y\in Y$, $P_{X | Y}(y)$ is the unique element $x\in X$ such that 
$$
\left< y - x, \tilde x\right>= 0,\quad  \forall \tilde x \in X.
$$

\begin{lemma}
Let $\Wm$ and $\Vn$ be an observation space and a reduced basis of dimension $n\leq m$ such that $\beta(\Vn, \Wm)>0$. Then the linear PBDW algorithm is given by
\begin{equation}
\label{eq:explicit-u}
u_{m,n}^*(\omega) = \omega + v^*_{m,n} - P_W v^*_{m,n},
\end{equation}
with
\begin{equation}
\label{eq:explicit-v}
v^*_{m,n} = \( P_{V_n | \Wm} P_{\Wm | V_n} \)^{-1} P_{V_n | \Wm} (\omega).
\end{equation}
\end{lemma}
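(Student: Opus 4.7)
The plan is to derive the explicit formula directly from the first-order optimality conditions of the constrained problem \eqref{eq:uStar}, exploiting the orthogonal decomposition $V = V_n \oplus V_n^\perp$. First I would reformulate the feasible set by writing every $u$ as $u = \omega + \eta$ with $\eta \in W_m^\perp$, so that the objective $\Vert u - P_{V_n} u\Vert^2 = \Vert P_{V_n^\perp}(\omega + \eta)\Vert^2$ becomes a nonnegative quadratic in $\eta$. Differentiating along $\delta\eta \in W_m^\perp$ and using that $P_{V_n^\perp}$ is self-adjoint and idempotent yields
\[
\langle u^* - P_{V_n} u^*,\, \delta\eta\rangle = 0, \quad \forall\, \delta\eta \in W_m^\perp,
\]
so the residual $r^* := u^* - P_{V_n} u^*$ lies in $(W_m^\perp)^\perp = W_m$. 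Since by construction $r^* \in V_n^\perp$, the key structural identity is $r^* \in V_n^\perp \cap W_m$.

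Next, I would set $v^* := P_{V_n} u^* \in V_n$ and $w^* := r^* \in V_n^\perp \cap W_m$, giving $u^* = v^* + w^*$. Applying $P_{W_m}$ to both sides and using $P_{W_m} u^* = \omega$ together with $w^* \in W_m$ yields $w^* = \omega - P_{W_m} v^*$, which is already the outer identity \eqref{eq:explicit-u}. To characterize $v^*$, the remaining constraint $w^* \in V_n^\perp$ reads $\langle \omega - P_{W_m} v^*,\, v\rangle = 0$ for every $v \in V_n$. Projecting both sides onto $V_n$ (using $v \in V_n$) and invoking self-adjointness of $P_{W_m}$ on $V$ rewrites this variationally as
\[
P_{V_n | W_m}\, P_{W_m | V_n}\, v^* = P_{V_n | W_m}(\omega),
\]
from which \eqref{eq:explicit-v} follows once the operator $P_{V_n | W_m} P_{W_m | V_n}: V_n \to V_n$ is invertible.

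The hard part is handling this invertibility cleanly, and this is exactly where the hypothesis $\beta(V_n, W_m) > 0$ enters. The operator $P_{V_n | W_m} P_{W_m | V_n}$ is self-adjoint and nonnegative, so its invertibility reduces to verifying $\Vert P_{W_m} v\Vert > 0$ for every nonzero $v \in V_n$, which is exactly the content of $\beta(V_n, W_m) > 0$ in definition \eqref{eq:infsup}. The same transversality condition $V_n \cap W_m^\perp = \{0\}$ is precisely what makes $\eta \mapsto \Vert P_{V_n^\perp}(\omega + \eta)\Vert^2$ strictly convex on $W_m^\perp$, thereby also securing uniqueness of the minimizer $u^*$; a single hypothesis thus simultaneously underwrites both halves of the argument, and the remainder is a routine bookkeeping exercise with orthogonal projections.
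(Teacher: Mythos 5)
Your argument is correct and arrives at exactly the same normal equations as the paper, but by a different route. The paper exchanges the order of minimization: for each fixed $v\in\Vn$ the inner minimization over $\eta\in\Wm^\perp$ is solved in closed form, which collapses the problem to the finite-dimensional least-squares problem $\min_{v\in\Vn}\Vert\omega - P_{\Wm}v\Vert^2$; the normal equations are then read off, the adjoint is identified via $P^*_{\Wm|\Vn}=P_{\Vn|\Wm}$, and existence of the minimizer is automatic because the inner problem is solved exactly and the outer problem is a finite-dimensional least squares. You instead keep the minimizer $u^*$ and derive first-order conditions in $\eta$, obtaining the structural identity $u^*-P_{\Vn}u^*\in\Vn^\perp\cap\Wm$; this is a genuine bonus of your route, since it exhibits directly that the PBDW reconstruction lives in $\Vn\oplus(\Vn^\perp\cap\Wm)$, the space appearing in the paper's error bounds, a fact the paper only records after the proof. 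Your invertibility argument (the operator $P_{\Vn|\Wm}P_{\Wm|\Vn}$ is self-adjoint and nonnegative with $\langle P_{\Vn|\Wm}P_{\Wm|\Vn}v,v\rangle=\Vert P_{\Wm}v\Vert^2$, hence positive definite iff $\beta(\Vn,\Wm)>0$) is equivalent to the paper's injectivity argument. The one point to tighten is existence: you derive \emph{necessary} conditions from an assumed minimizer, so you should add a line noting that the objective is convex in $\eta$, hence stationarity is also sufficient and the unique solution of your system is indeed the minimizer; your strict-convexity remark covers uniqueness but not existence (which one can also get from coercivity of $\eta\mapsto\Vert P_{\Vn^\perp}(\omega+\eta)\Vert^2$ on $\Wm^\perp$, again a consequence of $\beta(\Vn,\Wm)>0$). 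The remainder of your bookkeeping with projections is accurate.
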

\begin{proof}
By formula \eqref{eq:uStar}, $u^*_{m,n}(\omega)$ is a minimizer of
\begin{align}
\min_{u \in \omega + \Wm^\perp } \dist(u, V_n)^2
&= \min_{u \in \omega + \Wm^\perp } \min_{v\in V_n} \Vert u - v\Vert^2 \\
&= \min_{v\in V_n}  \min_{\eta \in \Wm^\perp} \Vert \omega + \eta - v \Vert^2 \\
&= \min_{v\in V_n} \Vert \omega -v - P_{\Wm^\perp}(\omega -v) \Vert^2 \\
&= \min_{v\in V_n} \Vert \omega -v + P_{\Wm^\perp}(v) \Vert^2 \\
&= \min_{v\in V_n} \Vert \omega - P_{\Wm}(v) \Vert^2 \label{eq:vn}
\end{align}
The last minimization problem is a classical least squares optimization. Any minimizer $v^*_{m,n}\in V_n$ satisfies the normal equations
$$
P^*_{\Wm|V_n}  P_{\Wm|V_n} v^*_{m,n} = P^*_{\Wm|V_n} \omega,
$$
where $P^*_{\Wm|V_n} : V_n \to \Wm$ is the adjoint operator of $P_{\Wm|V_n}$. Note that $P^*_{\Wm|V_n}$ is well defined since $\beta(V_n, \Wm)= \min_{v\in V_n} \Vert P_{\Wm|V_n} v \Vert / \Vert v \Vert >0 $, which implies that $P_{\Wm|V_n}$ is injective and thus admits an adjoint. Furthermore, since for any $\omega \in \Wm$ and $v\in V_n$, $\< v, \omega \>=\< P_{\Wm|V_n} v , \omega \> = \< v , P_{V_n|\Wm} \omega \>$, it follows that $P^*_{\Wm|V_n} = P_{V_n|\Wm}$, which finally yields that the unique solution of the least squares problem is
$$
v^*_{m,n} = \( P_{V_n | \Wm} P_{\Wm | V_n} \)^{-1} P_{V_n | \Wm} \omega .
$$
Therefore $u^*_{m,n} = \omega + \eta^*_{m,n} = \omega + v^*_{m,n} - P_\Wm v^*_{m,n}$.
\end{proof}

\textbf{Algebraic formulation:} The explicit expression \eqref{eq:explicit-v} for $v^*_n$ allows to easily derive its algebraic formulation. Let $F$ and $H$ be two finite-dimensional \dl{sub}spaces of $V$ of dimensions $n$ and $m$ respectively in the Hilbert space $V$ and let $\cF=\{f_i\}_{i=1}^n$ and $\cH=\{h_i\}_{i=1}^m$ be a basis for each subspace respectively. The Gram matrix associated to $\cF$ and $\cH$ is
$$
\bG(\cF, \cH) = \left(  \left< f_i, h_j\right> \right)_{\substack{1\leq i \leq n \\ 1\leq j \leq m}}.
$$
These matrices are useful to express the orthogonal projection
$P_{ F | H}: H\mapsto F$ in the bases $\cF$ and $\cH$ in terms of the matrix
$$
\bP_{F | H} = \bG(\cF, \cF)^{-1} \bG(\cF, \cH).
$$
As a consequence, if $\cV_n = \{ v_i \}_{i=1}^n$ is a basis of the space $V_n$ and $\cW_m = \{\omega_i\}_{i=1}^m$ is the basis of $W_m$ formed by the Riesz representers of the linear functionals $\{\ell_i\}_{i=1}^m$, the coefficients $\textbf{v}^*_{m,n}$ of the function $v^*_{m,n}$ in the basis $\cV_n$ are the solution to the normal equations
$$
\bP_{V_n | W_m} \bP_{W_m | V_n} 
\textbf{v}^*_{m,n} =  \bP_{V_n | W_m} \bG(\cW_m, \cW_m)^{-1} \textbf{w},
$$
where
$$
\bP_{V_n | W_m} = \bP_{V_n | W_m}^T
$$
since $P^*_{\Wm|V_n} = P_{\Wm|V_n}$ and $\textbf{w}$ is the vector of measurement observations
$$
\textbf{w} = (\left< u, \omega_i\right>)_{i=1}^m.
$$
Usually $\textbf{v}^*_{m,n}$ is computed with a QR decomposition or any other suitable method. Once $\textbf{v}^*_{m,n}$ is found, the vector of coefficients $\textbf{u}_{m,n}^*$ of $u^*_{m,n}$ easily follows.

\section{Details about the discretization of the Navier-Stokes equations and the ODE coupling}
\label{appendixDisc}
\fgal{
We use the following \textsl{semi-implicit} time discretization for the weak form of the Navier-Stokes equations \eqref{eq:Navier_Stokes}
\begin{equation}
\begin{aligned}
\rho \int_{\Omega}  \frac{u^{n+1} - u^n}{\Delta t} \cdot v ~\dx + \rho \int_{\Omega}   \( \nabla u^{n+1} \) u^{n}\cdot v~\dx + \mu \int_\Omega \nabla u^{n+1} : \nabla v ~ \dx - \int_{\Omega} \( \nabla \cdot v \) p^{n+1} ~\dx & \\ + \int_{\Omega} \(\nabla \cdot u^{n+1} \) q ~\dx + \int_{\Gamma_\outlet^1 \cup \Gamma_\outlet^2} \( \[ \mu \frac{\nabla u^{n+1} + \nabla^T \(u^{n+1}\)}{2} + p^{n+1} I_{3 \times 3} \] n \) \cdot v ~\ds = 0,
\end{aligned}
\label{eq:weak_form_continuous_NS}
\end{equation}
$\forall \(v, q \) \in [H^1(\Omega)]^3_0 \times L^2(\Omega)$. $[H^1(\Omega)]^3_0$ is the test space in $[H^1(\Omega)]^3$ with zero trace on the Dirichlet boundaries. $I_{3 \times 3}$ is an identity matrix of size $3$. This semi-implicit approach for the convective term allows us to avoid a root finding problem when computing the time-marching solutions. Thus, the problem is reduced to find the functions $u^1,\ldots,u^n,\ldots,u^{T/\Delta t}$ and $p^1,\ldots,p^n,\ldots, p^{T/\Delta t}$. In our computations, the time-step was set to $\Delta t = 2 \cdot 10^{-3} s$.} 

\fgal{The mixed problem for velocity and pressure is discretized in space using $\mathbb{P}_1-\mathbb{P}_1$ Lagrange elements. In order to avoid the inf-sup constraint imposed by the saddle point nature of the problem we use the Brezzi-Pitkäranta stabilization technique modifying the discrete equations \cite{brezzi1984}. Standard SUPG stabilization for convection dominated flows is used \cite{supg}. In addition, a backflow stabilization is added in order to address potential instabilities in the outlet boundaries (see, e.g., \cite{black_flow_benchmark_bertoglio} for a survey). Spatial discretization of the carotid geometry leads to a tetrahedron mesh with 42659 vertices.}

\fgal{Concerning the numerical solution of the ODE for the distal pressure in the Windkessel model, and its posterior coupling to the Navier-Stokes equations, we need to address the discretization of the boundary integrals on $\Gamma_\outlet^1$ and $\Gamma_\outlet^2$ in the weak form \eqref{eq:weak_form_continuous_NS}. The stress tensor, which we recall is defined as $\sigma (u,p) = \mu \( \nabla u + \nabla^T u \)/2 + p I_{3 \times 3}$ is used to couple the system with the ODE for the pressure $\bar p_{o,k}$, by using the non-homogeneous Neumann boundary condition $\(\sigma \) n = \bar p_{o,k} (1,1,1)^T$ on $\Gamma_\outlet^k$, $k=1,2$. As for the states $u$ and $p$, we have to compute the time marching solutions $ \(\bar p_{o,k}\)^1, \ldots, \(\bar p_{o,k} \)^n, \ldots, \(\bar p_{o,k} \)^{T/\Delta t}$. So as times goes by we calculate for every time step $n$
$$
\(\bar p_{o,k}\)^{n+1} = \(p_d^k\)^{n+1} + R_p^k \int_{\Gamma_\outlet^k} u^n \cdot n ~\ds ,\quad k=1,\,2,
$$
and the ODE for the distal pressures is discretized explictely so that their temporal evolution is given by
$$
\(p_d^k\)^{n+1} = \(p_d^k\)^n \left(1 - \frac{\Delta t}{C_d^k}\right) + \frac{\Delta t}{C_d^k} \int_{\Gamma_{\outlet}^k} u^n \cdot n ~\ds,\quad k=1,\,2.
$$}

\newpage
\bibliographystyle{unsrt}
\bibliography{GLM2020.bbl}

\end{document}